\newtheorem*{question}{Question}
\numberwithin{equation}{subsection}
\newtheoremstyle{plain2}    
   {}            
   {}            
   {\itshape}    
   {}            
   {\bfseries}   
   {.}           
   {5pt plus 1pt minus 1pt}  
   {{\thmnumber{#1} \thmname{#2}{\thmnote{ (#3)}}}}
\theoremstyle{plain2}
\newtheorem{thmy}{Theorem}
\renewcommand{\thethmy}{\Alph{thmy}} 
\newenvironment{thmx}{\stepcounter{Theorem}\begin{thmy}}{\end{thmy}}
\newtheorem{theorem}[equation]{Theorem}
\newtheorem{corollary}[equation]{Corollary}
\newtheorem{lemma}[equation]{Lemma}
\newtheorem{proposition}[equation]{Proposition}
\theoremstyle{definition}
\newtheorem{definition}[equation]{Definition}
\newtheorem{notation}[equation]{Notation}
\newtheorem{claim}[equation]{Claim}
\newtheorem{remark}[equation]{Remark}
\newtheorem{example}[equation]{Example}
\newtheoremstyle{stepstyle}
   {}     {}
   {\normalfont}
   {\parindent}
   {\itshape}
   {}
   {5pt plus 1pt minus 1pt}
   {{\thmname{#1} \thmnumber{#2}:{\thmnote{#3}}}}
\theoremstyle{stepstyle}
\newtheoremstyle{point}
   {}     {}
   {\normalfont}
   {}
   {\bfseries}
   {}
   {5pt plus 1pt minus 1pt}
   {{\thmname{#1}(\thmnumber{#2})\thmnote{ #3.}}}
\theoremstyle{point}
\newtheorem{point}[equation]{}
\newcommand{\pa}[1]{\begin{point}#1\end{point}}              
\newcommand{\Pa}[2]{\begin{point}[#1]#2\end{point}}         
\newtheoremstyle{subpoint}
   {}     {}           
   {\normalfont}
   {}                 
   {\normalfont}
   {}
   {5pt plus 1pt minus 1pt}
   {{\thmname{#1}{\bf (\thmnumber{#2})}\thmnote{ #3.}}}
\theoremstyle{subpoint}
\newtheorem{subpoint}[equation]{}
\newcommand{\spa}[1]{\begin{subpoint}#1\end{subpoint}}           
\newcommand{\Spa}[2]{\begin{subpoint}[#1]#2\end{subpoint}}   
\def\@tocline#1#2#3#4#5#6#7{\relax
  \ifnum #1>\c@tocdepth 
  \else
    \par \addpenalty\@secpenalty\addvspace{#2}%
    \begingroup \hyphenpenalty\@M
    \@ifempty{#4}{%
      \@tempdima\csname r@tocindent\number#1\endcsname\relax
    }{%
      \@tempdima#4\relax
    }%
    \parindent\z@ \leftskip#3\relax \advance\leftskip\@tempdima\relax
    \rightskip\@pnumwidth plus4em \parfillskip-\@pnumwidth
    #5\leavevmode\hskip-\@tempdima
      \ifcase #1
       \or\or \hskip 1em \or \hskip 2em \else \hskip 3em \fi%
      #6\nobreak\relax
    \dotfill\hbox to\@pnumwidth{\@tocpagenum{#7}}\par
    \nobreak
    \endgroup
  \fi}
\renewcommand{\O}{\mathcal{O}}
\newcommand{\X}{\mathfrak{X}}
\newcommand{\Frac}{\textrm{Frac}}
\newcommand{\pr}{\textrm{pr}}
\newcommand{\M}{\mathcal{M}}
\renewcommand{\P}{\mathbf{P}}
\newcommand{\del}{\partial}
\newcommand{\an}{\textrm{an}}
\newcommand{\NNN}{\mathbf{N}}
\renewcommand{\H}{\mathcal{H}}
\DeclareMathOperator{\An}{An}
\DeclareMathOperator{\triv}{triv}
\DeclareMathOperator{\hyb}{hyb}
\DeclareMathOperator{\DD}{D}
\DeclareMathOperator{\BB}{B}
\DeclareMathOperator{\ev}{ev}
\newcommand{\Q}{\mathbf{Q}}
\newcommand{\R}{\mathbf{R}}
\newcommand{\Z}{\mathbf{Z}}
\newcommand{\A}{\mathbf{A}}
\newcommand{\C}{\mathbf{C}}
\DeclareMathOperator{\Spec}{Spec}
\newcommand{\D}{\mathbf{D}}
\newcommand{\E}{\mathbf{E}}
\newcommand{\frakp}{\mathfrak{p}}
\newcommand{\frakr}{\mathfrak{r}}
\DeclareMathOperator{\ord}{ord}
\DeclareMathOperator{\Hom}{Hom}
\DeclareMathOperator{\arch}{arch}
\renewcommand{\lnot}{\mathord{\sim}}
\author{Thibaud Lemanissier and Matthew Stevenson}
\title{Topology of Hybrid Analytifications}
\date{\today}
\address{Department of Mathematics, University of Michigan, Ann Arbor, MI
48109--1043, USA}
\email{\href{mailto:stevmatt@umich.edu}{stevmatt@umich.edu}}
\begin{document}

\maketitle


\begin{abstract}
We investigate the topological properties of Berkovich analytifications over hybrid fields, that is a field equipped with the maximum of its native norm and the trivial norm. 
We prove that the analytification of the affine line or of a smooth projective curve over a countable Archimedean hybrid field is contractible, and show that it can be non-contractible when the field is uncountable.
Further, we prove that the analytification of affine space over a non-Archimedean hybrid field or over a discrete valuation ring is contractible.
As an application, we show that the Berkovich affine line over the ring of integers of a number field is contractible. 
\end{abstract}

\tableofcontents

\section{Introduction}

Let $k$ be a field equipped with a nontrivial absolute value $| \cdot |_k$, which
may be Archimedean or non-Archimedean, and is not assumed to be complete.
The field $k$ can also be thought of as a Banach ring equipped with the hybrid norm
$$
\| \cdot \|_{\hyb} \coloneqq \max\{ | \cdot |_k, | \cdot |_0 \},
$$
where $| \cdot |_0$ denotes the trivial norm on $k$.
To a variety $X$ over $k$, there is a \emph{hybrid analytification} 
$$
\lambda \colon X^{\hyb} \to \M(k, \| \cdot \|_{\hyb}) \simeq [0,1],
$$
which is a locally compact, Hausdorff topological space equipped with natural identifications 
$$
\begin{cases}
X^{\triv} = \lambda^{-1}(0),\\
X^{\an} = \lambda^{-1}(1),
\end{cases}
$$
where $X^{\an}$ and $X^{\triv}$ denote the analytifications of $X$ with respect to $(\widehat{k},| \cdot |_k)$ and $(k,| \cdot |_0)$, respectively (in the sense of~\cite{berkovich}).
Said differently, $X^{\hyb}$ is a family of analytic spaces that interpolates between $X^{\triv}$ and $X^{\an}$. 
When $k$ is the complex numbers equipped the Archimedean norm, $X^{\hyb}$ is a space that consists of both Archimedean and non-Archimedean data, hence the name `hybrid' analytification.

These hybrid spaces were first introduced in~\cite{berkovich-hodge} to give a non-Archimedean interpretation of the weight-zero piece of the mixed Hodge structure on a proper variety over $\mathbf{C}$. 
The properties of hybrid spaces (and, more generally, of analytifications over Banach rings) have been explored further in~\cite{poineau1,lemanissier}, and they have found additional applications in~\cite{mattias-hybrid,favre16,boucksom-jonsson,krieger}.

The goal of this paper is to investigate connections between the topology of $X^{\hyb}$ and that of $X^{\triv}$ and $X^{\an}$. More precisely, we pose the following question, inspired by~\cite[Remark 2.5(ii)]{berkovich-hodge}.

\begin{question}\label{main question}
Is the inclusion $X^{\triv} \hookrightarrow X^{\hyb}$ a homotopy equivalence?
\end{question}

Given a positive answer to the question, one can define a natural ``specialization'' map 
$$
H^{\ast}(X^{\triv},\Q) \to H^{\ast}(X^{\an},\Q)
$$
on (e.g.) singular cohomology, following~\cite[\S 2]{berkovich-hodge}. The topology of $X^{\triv}$ is some measure of the singularities of $X$, so this specialization map would offer insight as to how the singularities of $X$ inform the topology of $X^{\an}$.


However, the answer to the question is no in general: 
if $(k, | \cdot |_k)$ is Archimedean and $X = \A^1_k$ is the affine line over $k$, then $X^{\mathrm{triv}}$ is always contractible but $X^{\hyb}$ need not be. This is demonstrated by the first main result.
\begin{thmx}\label{theorem 1}
Let $k$ be a subfield of $\C$ equipped with the Archimedean norm $| \cdot |_{\infty}$.
\begin{enumerate}
\item If $k$ is countable, then there is a strong deformation retraction of $\A^{1,\hyb}_k$ onto $\A^{1,\triv}_k$; in particular, $\A^{1,\hyb}_k$ is contractible.
\item If $k$ is uncountable and not included in $\mathbf{R}$, then $\A^{1,\hyb}_k$ is not contractible.
\end{enumerate}
\end{thmx}

The idea for one direction of the proof of~\cref{theorem 1} is simple to state: if $k$ is uncountable, there is a point of $\lambda^{-1}(0)$ with no countable basis of open neighbourhoods, whereas $\lambda^{-1}((0,1])$ is metrizable. 
A careful study of the local structure of $\A^{1,\hyb}_k$ near this point of $\lambda^{-1}(0)$ ultimately leads to the proof of  non-contractibility.

When $k$ is countable, the proof of~\cref{theorem 1} shows more generally that any Zariski-open subset of $\P^{1,\hyb}_k$ is contractible. This can be pushed further to give the second main result.

\begin{thmx}\label{theorem 12}
Let $k$ be a countable subfield of $\C$, equipped with the Archimedean norm $| \cdot |_{\infty}$.
If $X$ is a smooth projective curve over $k$, then there is a strong deformation retraction of $X^{\hyb}$ onto $X^{\triv}$; in particular, $X^{\hyb}$ is contractible. 
\end{thmx}

The results of~\cite{locally_contractible_2} show that the trivially-valued analytification of a smooth variety is contractible; in particular,~\cref{theorem 12} gives a positive answer to the question in this special case.

Now, if one demands that $(k,|\cdot |_k)$ be a complete non-Archimedean field, there appears to be more hope for the question to yield a positive answer. This is exemplified by the third main result.

\begin{thmx}\label{theorem 2}
If $(k,|\cdot|_k)$ is a complete non-Archimedean field, then $\A^{n,\triv}_k \hookrightarrow \A^{n,\hyb}_k$ is a homotopy equivalence. 
\end{thmx}

The proof of~\cref{theorem 2} is based on the constructions of~\cite{berkovich,thuillier}: one uses the torus action on $\A^n_k$ to construct a ``hybrid toric skeleton'' in $\A^{n,\hyb}_k$
(this coincides with the image of the canonical section of the hybrid tropicalization map of~\cite{mattias-hybrid}).
More generally, the authors expect~\cref{theorem 2} to hold for any normal toric variety. 
See~\cite[\S 7]{abramovich} for a nice exposition of this construction in the classical setting.
In fact, the same method of proof shows the analogous result for affine spaces over 
a discrete valuation ring (dvr) or over a Dedekind domain, equipped with the trivial norm.




The methods developed in the proofs of the above results facilitate a description of the topology of the affine line over $(\Z,|\cdot |_{\infty})$ and, more generally, over the ring of integers of a number field.
The local structure of analytifications of schemes over such Banach rings has been explored in~\cite{poineau1}, but their global topological structure is not well understood. 
To that end, we offer an application of the above results.

\begin{thmx}\label{theorem 3}
The affine line over the ring of integers of a number field is contractible.
\end{thmx}

The proof of~\cref{theorem 3} follows by combining the construction in~\cref{theorem 1} with the aforementioned generalization of~\cref{theorem 2} to the affine line over a Dedekind domain equipped with the trivial norm.

\subsection{Organization}
In \S 2, we recall the construction of the analytification of a scheme over a Banach ring, and prove a lifting theorem for strong deformation retractions along finite branched covers. 
In~\S\ref{local structure}, we develop the preliminary results on the local structure of hybrid analytifications that are needed to prove~\cref{theorem 1} and~\cref{theorem 12}, whose proofs follow in \S 3.2-3.4.
The proof of~\cref{theorem 2} spans \S 4, and the proof of~\cref{theorem 3} follows in \S 5.


\subsection{Acknowledgements}
The authors would like to thank Antoine Ducros and Jérôme Poineau for putting the authors in touch and for helpful comments on the contents of the paper, as well as Tyler Foster, Jifeng Shen, and Martin Ulirsch for interesting discussions related to the paper. 
The second author would like to thank his advisor, Mattias Jonsson, for many helpful comments on an earlier version, which greatly improved the presentation.
The second author was partially supported by NSF grant DMS-1600011. 


\section{Preliminaries}




\subsection{Analytifications over Banach rings}\label{topological analytifications}
\spa{
A \emph{Banach ring} is a pair $(A, \| \cdot \|)$, where $A$ is a commutative noetherian ring, and $\| \cdot \| \colon A \to \R_{\geq 0}$ is a submultiplicative norm with respect to which $A$ is complete; when the norm $\| \cdot \|$ is implicit, we simply write $A$ for the pair $(A,\| \cdot \|)$. 

The \emph{spectrum} $ \M(A,\| \cdot \|) $ of $(A,\| \cdot \|)$ is the set of multiplicative seminorms $| \cdot | \colon A \to \R_{\geq 0}$ such that $| \cdot | \leq \| \cdot \|$, equipped with the topology of pointwise convergence.
This space is compact and Hausdorff, and it is non-empty when $A$ is nonzero. 
The spectrum comes equipped with a continuous map $\rho_A \colon \M(A, \| \cdot \|) \to \Spec(A)$ that sends a seminorm to its kernel. 
More generally, the spectrum can be defined for normed rings that are not necessarily complete, but the construction factors through the separated completion.
See~\cite[\S 1]{berkovich} for more details.


Some classic examples of Banach rings include a field that is complete with respect to a norm, any ring equipped with the trivial norm, or the integers equipped with the Archimedean absolute value. See~\cite[\S 1.4]{berkovich} for a discussion of these Banach rings and their spectra.
}

\begin{example}\label{hybrid point}
Let $k$ be a field equipped with a non-trivial norm $| \cdot |_k$, and let $| \cdot |_0$ denote the trivial norm on $k$. The \emph{hybrid norm} $\| \cdot \|_{\mathrm{hyb}} \coloneqq \max\{ |\cdot |_k, | \cdot |_0 \}$ is a submultiplicative norm on $k$, and $(k, \| \cdot \|_{\mathrm{hyb}})$ is complete (regardless of the completeness of $(k,|\cdot |_k)$!).
There is a continuous injection $[0,1] \hookrightarrow \M(k,\| \cdot \|_{\mathrm{hyb}})$ given by
$$
\rho \mapsto | \cdot |_k^{\rho},
$$
where $| \cdot |_k^{0}$ is interpreted as the trivial norm $| \cdot |_0$. 
Arguing as in~\cite[\S 2]{berkovich-hodge}, one can check that this is in fact a homeomorphism.
The space $\M(k,\| \cdot \|_{\mathrm{hyb}})$ is 
called
the \emph{hybrid point} over $(k, |\cdot |_k)$, and
the pair 
$(k, \| \cdot \|_{\mathrm{hyb}})$
is the \emph{hybrid field}.
\end{example}


\spa{
Fix a Banach ring $(A, \| \cdot \|)$. 
For a finite type $A$-scheme $X$, Berkovich introduced in~\cite[\S 1]{berkovich-hodge} a notion of analytification $X^{\An} \to \M(A)$ of $X$ over the Banach ring $A$.
More precisely,
there is a locally compact topological space $X^{\An}$, called the \emph{analytification} of $X$ over $A$, which is equipped with a continuous map $\lambda = \lambda_X \colon X^{\An} \to \M(A)$. 
The construction of $X^{\An}$ proceeds in two steps:
\begin{enumerate}[label=(\alph*)]
\item If $X = \Spec(B)$ is affine, then $X^{\An}$ is the set of multiplicative seminorms $| \cdot |\colon B \to \R_{\geq 0}$ such that $| a | \leq \| a \|$ for all $a \in A$;
$X^{\An}$ is equipped with the weakest topology such that $ | \cdot | \mapsto |f |$ is continuous for each $f \in B$. 
For a seminorm $x \in X^{\An}$ and $f \in B$, write $|f(x)|$ for the value of $x$ on $f$. 
The map $\lambda \colon X^{\An} \to \M(A)$ is given by restriction of the seminorms to $A$.
\item In general, $X$ can be covered by finitely-many open affine subsets and the previous procedure glues to give the space $X^{\An}$ and the map $\lambda \colon X^{\An} \to \M(A)$. 
\end{enumerate}
}
\noindent Furthermore, there is continuous map $\rho_X \colon X^{\An} \to X$, called the \emph{kernel map}, that fits into a commutative diagram of continuous maps
\begin{center}
\begin{tikzcd}
X^{\An} \arrow{r}{\rho_X} \arrow[swap]{d}{\lambda} & X \arrow{d}{} \\
\M(A) \arrow{r}{\rho_A} & \Spec(A).
\end{tikzcd}
\end{center}
When $X = \Spec(B)$ is affine, $\rho_X$ is the map that sends a seminorm on $B$ to its kernel (hence the name).

\spa{
For $x \in X^{\An}$, write $\H(x) = \H_X(x)$ for the \emph{completed residue field} of $X^{\An}$ at $x$. If $X = \Spec(B)$ is affine, then $\H(x)$ is the completion of $\mathrm{Frac}(B/\ker(x))$ with respect to the residue norm induced by $x$. In general, $\H(x)$ can be computed in this manner after passing to an affine open containing $\ker(x)$.
}

\spa{
Working affine-locally, it is easy to check that the assignment $X \mapsto X^{\An}$ is functorial; for a morphism $\varphi \colon Y \to X$ between finite-type $A$-schemes, write $\varphi^{\An} \colon Y^{\An} \to X^{\An}$ for the corresponding continuous map between analytifications. 
In addition, the formation of the kernel map is functorial; that is, $\varphi \circ \rho_Y = \rho_X \circ \varphi^{\An}$.
}


\spa{
The analytification functor satisfies certain (topological) GAGA theorems; that is, there are properties of a morphism of schemes that translate to topological properties of the induced map on analytifications.
The relevant topological conditions are recalled below:
for a continuous map $\phi \colon V \to U$ of topological spaces, we say that
\begin{enumerate}[label=(\alph*)]
\item $\phi$ is \emph{Hausdorff} (or \emph{separated}) if the diagonal $V \to V \times_U V$ is a closed map, and this is always satisfied if $V$ is Hausdorff;
\item $\phi$ is \emph{proper} if the preimage of any (quasi-)compact set is (quasi-)compact, and this is equivalent to $\phi$ being a closed map with (quasi-)compact fibres when $U$ is locally compact and Hausdorff;
\item $\phi$ is \emph{finite} if it is a closed map with finite fibres.
\end{enumerate}
\noindent The (topological) GAGA results needed in the sequel are recorded in the following proposition.

\begin{proposition}\label{topological gaga}
Let $\varphi \colon Y \to X$ be a morphism between finite-type $A$-schemes.
\begin{enumerate}[label=(\roman*)]
\item If $\varphi$ is an open (resp.\ closed) immersion, then $\varphi^{\An}$ is a homeomorphism onto an open (resp.\ closed) subspace.
\item If $\varphi$ is separated, then $\varphi^{\An}$ is Hausdorff.
\item If $\varphi$ is proper, then $\varphi^{\An}$ is proper.
\item If $\varphi$ is finite, then $\varphi^{\An}$ is finite.
\end{enumerate}
\end{proposition}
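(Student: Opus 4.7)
The plan is to work affine-locally throughout and exploit the explicit description of the analytification as a space of multiplicative seminorms.

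First, I would tackle (i). The open immersion case is essentially tautological from the gluing construction: if $Y \hookrightarrow X$ is open, covering $X$ by affine opens and restricting to those meeting $Y$ identifies $Y^{\An}$ with an open subspace of $X^{\An}$. For closed immersions, I would reduce to the affine case $Y = \Spec(B/I) \hookrightarrow X = \Spec(B)$ and observe that a multiplicative seminorm on $B/I$ bounded by $\|\cdot\|$ is the same data as such a seminorm on $B$ whose kernel contains $I$. This identifies $Y^{\An}$ with $\{x \in X^{\An} : |f(x)|=0 \text{ for all } f \in I\}$, which is closed by the defining topology on $X^{\An}$ (generated by the functions $x \mapsto |f(x)|$).

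For (ii), I would use the characterization of separatedness via the diagonal $\Delta_{\varphi} \colon Y \to Y \times_X Y$ being a closed immersion. Analytifying and applying (i), the image of $\Delta_{\varphi}^{\An}$ is closed in $(Y \times_X Y)^{\An}$. The natural continuous map $(Y \times_X Y)^{\An} \to Y^{\An} \times_{X^{\An}} Y^{\An}$ carries this closed subset onto the topological diagonal of $Y^{\An}$ over $X^{\An}$; a local computation with generators of the ideal defining $\Delta_{\varphi}$ shows this image is in fact closed in the fiber product, giving the Hausdorff property. For (iii), I would use that any proper morphism factors locally as a closed immersion into $\mathbf{P}^n_X$ followed by the projection $\mathbf{P}^n_X \to X$. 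By (i), analytifications of closed immersions are closed embeddings and hence proper maps between locally compact Hausdorff spaces, so the matter reduces to verifying properness of $\mathbf{P}^{n,\An}_A \to \M(A)$ using the standard affine cover and compactness of the fibers.

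For (iv), finite morphisms are affine, so I would reduce to $Y = \Spec(B)$ with $B$ a finite module over the coordinate ring of $X$. Finiteness of fibers of $\varphi^{\An}$ follows because the fiber above $x \in X^{\An}$ parametrizes multiplicative seminorms on the finite-dimensional $\H(x)$-algebra $B \otimes \H(x)$, and such a finite algebra has finite Berkovich spectrum. Combining with (iii) (a finite morphism is proper) yields the required closedness, and hence finiteness, of $\varphi^{\An}$. The step I expect to be the main obstacle is (ii): justifying the behaviour of analytification on fiber products over a general Banach ring rather than a complete valued field, since the natural map from $(Y \times_X Y)^{\An}$ to $Y^{\An} \times_{X^{\An}} Y^{\An}$ need not be a homeomorphism, so one must verify by hand that the diagonal remains closed after passing to the fiber product.
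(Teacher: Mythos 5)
The paper itself gives no argument here: it simply cites \cite{berkovich-hodge} for (i)--(iii) and \cite{lemanissier} for (iv). So you are reconstructing a proof the paper omits; your treatments of (i) and (iv) are essentially the standard ones and are fine (modulo the dependence of (iv) on (iii)). But two steps in your sketch have genuine gaps.

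For (ii), the obstacle you flag is real, and your proposed fix does not work as stated. Generators of the ideal of $\Delta_{\varphi}$ live in rings of the form $\O(V_1)\otimes_{\O(W)}\O(V_2)$, and such elements define functions on $(V_1\times_W V_2)^{\An}$ but \emph{not} on the topological fibre product $V_1^{\An}\times_{W^{\An}}V_2^{\An}$: a point of the latter is just a pair of seminorms agreeing on $\O(W)$, and there is no way to evaluate a general tensor against it. So "a local computation with generators of the ideal" cannot directly exhibit the topological diagonal as a closed set; and transferring closedness along the comparison map $(Y\times_X Y)^{\An}\to Y^{\An}\times_{X^{\An}}Y^{\An}$ would require that map to be a closed surjection, which over a general Banach ring is a nontrivial assertion about completed tensor products of completed residue fields (precisely the kind of statement the paper says elsewhere would need a version of Gruson's theorem beyond valued fields). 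What is actually needed is a direct argument: given $y_1\neq y_2$ in the same fibre of $\varphi^{\An}$, produce disjoint open neighbourhoods. When $\rho_Y(y_1)$ and $\rho_Y(y_2)$ lie in a common affine open this is immediate, since distinct seminorms are separated by sets $\{|f|<c\}$ and $\{|f|>c\}$; the remaining case is exactly where separatedness must enter, via the surjectivity of $\O(V_1)\otimes_{\O(W)}\O(V_2)\to\O(V_1\cap V_2)$, and your sketch does not address it.

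For (iii), the claim that a proper morphism "factors locally as a closed immersion into $\mathbf{P}^n_X$ followed by the projection" is false: proper morphisms need not be projective, even locally on the target, and localizing on $X$ is no help when $X$ is already affine. The standard repair is Chow's lemma: take a projective surjection $Y'\to Y$ with $Y'\to X$ projective, prove properness for $\mathbf{P}^{n,\An}_A\to\M(A)$ and for closed immersions as you propose, deduce properness of $Y'^{\An}\to X^{\An}$, and then descend to $Y^{\An}$ using surjectivity of $Y'^{\An}\to Y^{\An}$ (which itself needs the identification of fibres of analytifications with analytifications of fibres) together with the Hausdorffness furnished by (ii). As written, your reduction for (iii) would fail for any proper non-projective $Y$ over $X=\Spec(A)$.
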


\begin{proof}
The assertion (i) is~\cite[Lemma 1.1(ii)]{berkovich-hodge}, (ii) and (iii) are~\cite[Lemma 1.2]{berkovich-hodge}, and (iv) is~\cite[Proposition 6.24]{lemanissier}.
\end{proof}
}


\spa{
The analytification over a Banach ring captures many notions that exist already in the literature. 
Three such examples are listed below.
\begin{enumerate}[label=(\alph*)]
\item If $(A,\| \cdot \|) = (\mathbf{C},| \cdot |_{\infty})$, then $X^{\An}$ is the usual complex analytification $X^{h} = X(\mathbf{C})$ of $X$.
\item If $(A,\| \cdot \|) = (\mathbf{R}, | \cdot |_{\infty})$, then $X^{\An}$ is an $\R$-analytic space in the sense of~\cite[\S 1]{berkovich-vanishing-cycles}, whose underlying space is the quotient of $X(\C)$ by complex conjugation. 
This is not to be confused with the classical notion of a real-analytic space, which consists of the points of $X(\C)$ that are fixed by complex conjugation.
\item If $(A,\| \cdot \|) = (k, |\cdot |_k)$ is a complete non-Archimedean field, then $X^{\An}$ is the analytification $X^{\an}$ of $X$ in the sense of~\cite{berkovich,berkovich93}.
\end{enumerate}
While additional exotic examples appear elsewhere (see e.g.\ \cite{berkovich93,poineau1,mattias-hybrid,boucksom-jonsson}), the example at the heart of this paper is the case when $A$ is a hybrid field, which is discussed in the example below.
}

\begin{example}\label{hybrid analytification}
Let $(k, \| \cdot \|_{\mathrm{hyb}})$ be a hybrid field as in~\cref{hybrid point}.
For a finite-type $k$-scheme $X$, the analytification $\lambda \colon X^{\hyb} \to \M(k , \| \cdot \|_{\mathrm{hyb}}) \simeq [0,1]$ is called the \emph{hybrid analytification} of $X$. 
As detailed in~\cite[\S 2]{berkovich-hodge}, there are identifications 
$$
\begin{cases}\lambda^{-1}(0) = X^{\mathrm{triv}},\\
\lambda^{-1}(1) = X^{\an},
\end{cases}
$$
where $X^{\mathrm{triv}}$ and $X^{\mathrm{an}}$ denote the analytifications (in the sense of~\cite{berkovich}) with respect to $(k, | \cdot |_0)$ and $(\widehat{k},| \cdot |_k)$, respectively. 
Moreover, there is a homeomorphism $\lambda^{-1}((0,1]) \simeq (0,1] \times X^{\an}$ over $(0,1]$; when $X = \Spec(B)$ is affine, a seminorm $x \in X^{\hyb}$ is sent to $\lambda(x) \in (0,1]$ and the seminorm $f \mapsto |f(x)|^{1/\lambda(x)}$ on $B$. 
The formation of this homeomorphism $\lambda^{-1}((0,1]) \simeq (0,1] \times X^{\an}$ is easily seen to be functorial in $X$. 

If $x \in \lambda^{-1}((0,1])$ corresponds to $(\rho,x') \in (0,1] \times X^{\an}$, then the completed residue fields $\H_{X^{\hyb}}(x)$ and $\H_{X^{\an}}(x')$ are canonically isometric. Indeed, if $X = \Spec(B)$ is affine, then $\ker(x) = \ker(x')$ and so both fields are completions of the field $\Frac(B/\ker(x))$ with respect to equivalent norms.

Further, if $k$ is a subfield of $\C$ and $|\cdot |_k = |\cdot |_{\infty}$ is the Archimedean norm,
then the fibre $\lambda^{-1}(\rho)$, for $\rho \in (0,1]$, is a complex-analytic space if $k$ is not included in $\R$, and it is an $\R$-analytic space otherwise. 
On the other hand, the fibre $\lambda^{-1}(0)$ is a non-Archimedean analytic space over $(k, |\cdot |_0)$. 
For this reason, we say that the $\lambda^{-1}(\rho)$'s, for $\rho \in (0,1]$, are the \emph{Archimedean fibres} of $X^{\hyb}$, and $\lambda^{-1}(0)$ is the \emph{non-Archimedean fibre}. 
%
%
%
\end{example}

\begin{proposition}\label{prop:path-connected}
Let $(k,\| \cdot \|_{\hyb})$ be a hybrid field, and let $X$ be a finite-type $k$-scheme.
Suppose that one of the following two conditions hold:
\begin{enumerate}[label=(\roman*)]
\item $X(k) \not= \emptyset$;
\item $k$ is complete with respect to $| \cdot |_k$.
\end{enumerate}
If $X$ is connected, then $X^{\hyb}$ is path-connected.
\end{proposition}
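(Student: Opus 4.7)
The plan is to reduce path-connectedness of $X^{\hyb}$ to three ingredients: path-connectedness of the non-Archimedean fibre $\lambda^{-1}(0) = X^{\triv}$, path-connectedness of $\lambda^{-1}((0,1]) \simeq (0,1] \times X^{\an}$, and the existence of a continuous ``rigid section'' $[0,1] \to X^{\hyb}$ of $\lambda$ joining the two fibres. The first two ingredients are standard facts about Berkovich analytifications of connected finite-type schemes: for $X^{\triv}$, one uses the retraction to the generic-point Gauss seminorm (as in~\cite[\S 2]{berkovich-hodge}); for $X^{\an}$, one invokes Berkovich's connectivity theorem in the non-Archimedean case and the classical path-connectedness of connected $\C$- or $\R$-analytic spaces in the Archimedean case. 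Combined with the product decomposition $\lambda^{-1}((0,1]) \simeq (0,1] \times X^{\an}$ recalled in~\cref{hybrid analytification}, these give path-connectedness of each of the two pieces $\lambda^{-1}(0)$ and $\lambda^{-1}((0,1])$.

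For the bridge between these pieces, I would mimic the construction sketched in~\cite[Corollary~2.2]{berkovich-hodge}: choose a closed point $z \in X$ and define a rigid section $\sigma_z \colon [0,1] \to X^{\hyb}$ by declaring, for $f$ in an affine neighborhood of $z$,
$$
|f(\sigma_z(\rho))| = \begin{cases} |f(z)|_k^{\rho}, & \rho \in (0,1], \\ |f(z)|_0, & \rho = 0. \end{cases}
$$
The two hypotheses are precisely what is needed to make $|f(z)|_k$ well-defined. Under (i), any $z \in X(k)$ has residue field $k(z) = k$, so the expression is literally an absolute value of an element of $k$. Under (ii), completeness of $(k,|\cdot|_k)$ ensures that $|\cdot|_k$ extends uniquely to the finite extension $k(z)/k$, allowing $|f(z)|_k$ to be evaluated unambiguously via that extension. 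In either case one verifies that each $\sigma_z(\rho)$ is a multiplicative seminorm bounded by the hybrid norm on $k$; continuity of $\sigma_z$ on $(0,1]$ is immediate from the product decomposition, and continuity at $\rho = 0$ follows from the pointwise convergence $|a|_k^{\rho} \to |a|_0$ as $\rho \to 0^+$. The image of $\sigma_z$ is then a path in $X^{\hyb}$ from a point of $X^{\triv}$ to a point of $X^{\an}$.

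With these three ingredients, path-connectedness follows by case analysis: given $x, y \in X^{\hyb}$, connect each to a point on the image of $\sigma_z$ using a path within $X^{\triv}$ or within $(0,1] \times X^{\an}$, and bridge the two via $\sigma_z$ itself. The main obstacle I anticipate is checking the well-definedness of $\sigma_z$ at a non-$k$-rational closed point in case (ii), namely that the seminorms defined on different affine charts around $z$ agree on overlaps; this reduces to the compatibility of the unique norm extension on $k(z)$ with restriction to subfields, but it is precisely the step where the completeness hypothesis is essential. (Without one of (i) or (ii) the formula for $\sigma_z$ would depend on a choice of extension of $|\cdot|_k$ to $k(z)$, and continuity at $\rho = 0$ would no longer be automatic.)
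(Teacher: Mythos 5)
Your proposal is correct, and case (i) is essentially the paper's argument: the paper likewise builds the rigid section $\sigma_z$ at a $k$-point and bridges fibres with it, using path-connectedness of each fibre of $\lambda$ (via~\cite[XII, Proposition 2.4]{sga1} and~\cite[Theorem 3.4.8(ii)]{berkovich}) rather than your slightly coarser split into $\lambda^{-1}(0)$ and $\lambda^{-1}((0,1])$ --- an immaterial difference. Where you genuinely diverge is case (ii). The paper does \emph{not} construct a rigid section at a non-rational closed point; instead it chooses a finite extension $K/k$ over which $X$ acquires a rational point (the norm extending uniquely by completeness), applies case (i) to $X_K^{\hyb}$, and then descends path-connectedness along the continuous surjective ground field extension map $X_K^{\hyb} \to X^{\hyb}$ of~\cite[\S 4.3]{lemanissier}. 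Your route stays over $k$ and defines $\sigma_z(\rho)$ as the composite of $B \to k(z)$ with the $\rho$-th power of the unique extension of $|\cdot|_k$ to the finite extension $k(z)$; the well-definedness worry you flag is genuine but harmless, since the seminorm is the intrinsic composite through the residue field and so is chart-independent, and boundedness by $\| \cdot \|_{\hyb}$ and continuity at $\rho = 0$ go through exactly as in the rational case. The trade-off: your argument is more self-contained (no appeal to the ground field extension machinery and its surjectivity), while the paper's reduction to the rational-point case avoids any discussion of seminorms valued in norm extensions and generalizes more readily (cf.\ the remark following the proposition, which abstracts the paper's descent step to arbitrary Banach rings). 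Both are valid proofs.
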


In addition, \cite[Th\'eor\`eme 6.14]{lemanissier} shows the deeper result that a hybrid analytification is locally path-connected (while the proof is written in the case where the base ring is the ring of integers of a number field, it goes through verbatim in the case of a hybrid field).
This result is the hybrid analogue of~\cite[Theorem 3.2.1]{berkovich}. 

\begin{proof}
The strategy of the proof follows~\cite[Corollary 2.2]{berkovich-hodge}.
Assume first that $X$ has a $k$-point $z \in X(k)$.
The point $z$ induces a continuous section $\sigma_z$ of the structure map $\lambda \colon X^{\hyb} \to [0,1]$, given Zariski-locally by 
$$
|f(\sigma_z(\rho))| \coloneqq |f \bmod \mathfrak{m}_z |_k^{\rho},
$$
where $U = \Spec(B)$ is an affine-open neighbourhood of $z$ of $X$, $\mathfrak{m}_z \subseteq B$ is the maximal ideal corresponding to $z$, $f \in B$, and $\rho \in [0,1]$. 
Each fibre of $\lambda$ is path-connected by~\cite[XII, Proposition 2.4]{sga1} and~\cite[Theorem 3.4.8(ii)]{berkovich}, so it suffices to produce a path between points $x$ and $x'$ that lie in two distinct fibres of $\lambda$. 
This can be done as follows: take a path in $\lambda^{-1}(\lambda(x))$ from $x$ to $\sigma_z(\lambda(x))$, compose it with the path $t \mapsto \sigma_z(t)$ for $t \in [\lambda(x),\lambda(x')]$, and then with a path in $\lambda^{-1}(\lambda(x'))$ from $\sigma_z(\lambda(x'))$ to $x'$. 

Assume now that $k$ is complete and $X$
does not admit a $k$-point.
Pick a finite extension $K/k$ such that $X$ admits a $K$-point, and the completeness of $k$ guarantees that there is a unique extension $|\cdot |_K$ of the norm $|\cdot |_k$ on $k$ to one on $K$. 
By the previous case, the hybrid analytification $X_K^{\hyb}$ of $X_K = X \times_k K$ with respect to the hybrid norm on $K$ is path-connected. 
By~\cite[\S 4.3]{lemanissier}, there is a continuous, surjective ground field extension map $X_K^{\hyb} \to X^{\hyb}$; in particular, $X^{\hyb}$ is also path-connected.
\end{proof}

\begin{remark}
The proof of~\cref{prop:path-connected} shows the following more general assertion.
Suppose $A$ is a Banach ring with $\M(A)$ path-connected and $X$ is a connected finite-type $A$-scheme such that there exists a Banach $A$-algebra $A'$ with $X(A') \not= \emptyset$, $\M(A')$ path-connected, and $\M(A') \to \M(A)$ surjective. Then, the analytification $X^{\An}$ is path-connected.
\end{remark}

\begin{example}\label{hybrid affine line example}
Let $k$ be a subfield of $\C$ equipped with the Archimedean norm $|\cdot |_{\infty}$. 
The hybrid analytification $\A^{1,\hyb}_k$ of the affine line is a central example in this paper, and its points can be described explicitly.

The Archimedean points of $\A^{1,\hyb}_k$ will be described first: given $z \in \C$ and $t \in (0,1]$, write $\ev(z,t)$ for the seminorm in $\lambda^{-1}((0,1])$ given by
$$
f \mapsto |f(\ev(z,t))| \coloneqq |f(z)|_{\infty}^t.
$$
Following~\cref{hybrid analytification}, the homeomorphism-type of $\lambda^{-1}((0,1])$ is easy to describe:
\begin{enumerate}[label=(\alph*)]
\item If $k$ is not contained in $\mathbf{R}$, then the map $(z,t) \mapsto \ev(z,t)$ is a homeomorphism $(0,1] \times \C \simeq \lambda^{-1}((0,1])$.
\item If $k$ is contained in $\mathbf{R}$, then $\ev(z,t) = \ev(\overline{z},t)$, and $(z,t) \mapsto \ev(z,t)$ descends to a homeomorphism $(0,1] \times (\C/\lnot) \simeq \lambda^{-1}((0,1])$, where $\sim$ denotes the equivalence relation on $\mathbf{C}$ generated by complex conjugation.
\end{enumerate}
\noindent While the topological space $\lambda^{-1}((0,1])$ depends only on whether or not $k$ is a subfield of $\R$, the topology of $\A^{1,\hyb}_k$ and of $\lambda^{-1}(0)$ very much do depend on the field $k$. 

The fibre $\lambda^{-1}(0)$ is identified with the trivially-valued analytification $\A^{1,\mathrm{triv}}_k$, whose points are well-understood.
To each monic, irreducible polynomial $p \in k[T]$ and $r \in \R_{\geq 0}$, we associate a point $\eta_{p,r} \in \lambda^{-1}(0)$ as follows: 
for any root $z \in \C$ of $p$, 
define a multiplicative seminorm $k[T] \to \R_{\geq 0}$ by the formula
$$
f = \sum_{i \geq 0}a_i (T-z)^i \mapsto |f(\eta_{p,r})| \coloneqq \max_{i \geq 0} |a_i|_0 r^i.
$$
The seminorm $\eta_{p,r}$ is independent of the choice of root $z$ of $p$.
If $p = T-z$ is linear, write $\eta_{z,r} \coloneqq \eta_{p,r}$.
As explained in~\cite[\S 1.4.4]{berkovich}, every point of $\A^{1,\triv}_k$ arises in this manner. 

The map $[0,1] \to \A^{1,\hyb}_k$, given by $r \mapsto \eta_{p,r}$, is an injective continuous map with image in $\lambda^{-1}(0)$, and we denote its image by $[\eta_{p,0},\eta_{p,1}]$. 
The open interval $[\eta_{p,0},\eta_{p,1})$ is called a \emph{branch} of $\A^{1,\hyb}_k$.
As we range over all such polynomials $p$, the image of the intervals $[\eta_{p,0},\eta_{p,1}]$ sweeps out a compact subspace of $\A^{1,\triv}_k$.
The points not lying in any branch are those of the form $\eta_{0,r}$ for $r \geq 1$ (note that for any polynomials $p$ and $q$, $\eta_{p,r} = \eta_{q,r}$ whenever $r \geq 1$).

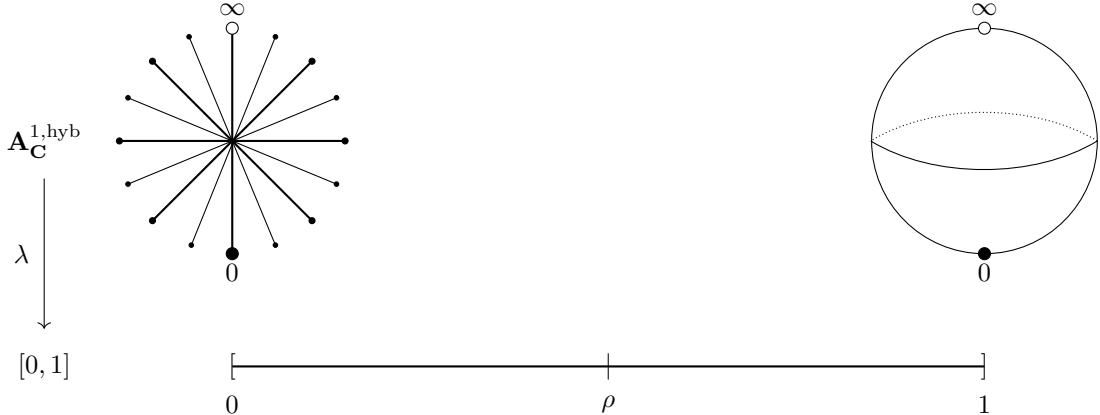
\begin{figure}[h]
  \centering
  \begin{tikzpicture}

    \draw[thick] (-7,-3) -- (3,-3);
    \node at (-7,-3) {$[$};
    \node at (3,-3) {$]$};
    \node at (3,-3.5) {$1$};
    \node at (-7,-3.5) {$0$};
    \node at (-2,-3.5) {$\rho$};
    \node at (-2,-3) {$|$};
    \draw[->] (-9.5,-0.5) -- (-9.5,-2.5);
    \node at (-9.8,-1.5) {$\lambda$};
    \node at (-9.5,0) {$\A^{1,\hyb}_{\C}$};
    \node at (-9.5,-3) {$[0,1]$};
    \draw (3,0) circle (1.5cm);
    \draw[fill=white] (3,1.5) circle (0.08cm);
    \node at (3,1.75) {$\infty$};
    \draw[fill=black] (3,-1.5) circle (0.08cm);
    \node at (3,-1.75) {$0$};
    \draw (4.5,0) arc (315:225:2.13cm and 1.3cm);
    \draw[densely dotted] (4.5,0) arc (45:135:2.13cm and 1.3cm);
    
    \draw[thick] (-7,-1.5) -- (-7,1.5);
    \draw[fill=white] (-7,1.5) circle (0.08cm);
    \node at (-7,1.75) {$\infty$};
    \draw[fill=black] (-7,-1.5) circle (0.08cm);
    \node at (-7,-1.75) {$0$};
    \draw[thick] (-8.5,0) -- (-5.5,0);
    \draw[fill=black] (-8.5,0) circle (0.04cm);
    \draw[fill=black] (-5.5,0) circle (0.04cm);
    \draw[thick] (-8.06066,0.43934-1.5) -- (-5.93934,2.56066-1.5);
    \draw[fill=black] (-8.06066,0.43934-1.5) circle (0.04cm);
    \draw[fill=black] (-5.93934,2.56066-1.5) circle (0.04cm);
    \draw[thick] (-8.06066,1.06066) -- (-5.93934,-1.06066);
    \draw[fill=black] (-8.06066,1.06066) circle (0.04cm);
    \draw[fill=black] (-5.93934,-1.06066) circle (0.04cm);
    \draw (-5.61418,0.574025) -- (-8.38582,-0.574025);
    \draw[fill=black] (-5.61418,0.574025) circle (0.03cm);
    \draw[fill=black] (-8.38582,-0.574025) circle (0.03cm);
    \draw (-6.42597,1.38582) -- (-7.54403,-1.38582);
    \draw[fill=black] (-6.42597,1.38582) circle (0.03cm);
    \draw[fill=black] (-7.54403,-1.38582) circle (0.03cm);
    \draw (-7.57403,1.38582) -- (-6.42597,-1.38582);
    \draw[fill=black] (-7.57403,1.38582) circle (0.03cm);
    \draw[fill=black] (-6.42597,-1.38582) circle (0.03cm);
    \draw (-8.38582,0.574025) -- (-5.61418,-0.574025);
    \draw[fill=black] (-8.38582,0.574025) circle (0.03cm);
    \draw[fill=black] (-5.61418,-0.574025) circle (0.03cm);
    
  \end{tikzpicture}
  \caption{The hybrid affine line $\A^{1,\hyb}_{\C}$ over $\C$ interpolates between the complex plane (pictured on the right) and the trivially-valued Berkovich analytification of $\A^1_{\C}$ (pictured on the left).}
  \label{fig:hybrid_line}
\end{figure}  

The point $\eta_{0,1}$ plays a special role: if $k$ is uncountable, $\eta_{0,1}$ is the only point of $\A^{1,\hyb}_k$ without a countable basis of open neighbourhoods.
In fact, any open neighbourhood of $\eta_{0,1}$ contains cofinitely-many branches of $\A^{1,\hyb}_k$.
\end{example}

\subsection{Analytifications over Geometric Base Rings}

\spa{There is a class of Banach rings, known as \emph{geometric base rings}, for which the properties of the analytification functor are much better understood; in particular, there is a reasonable category of analytic spaces over such Banach rings. 
These are rings of dimension at most 1 
whose spectra satisfy certain topological and analytic conditions, and they are
defined and studied in
~\cite[D\'efinition 3.22]{lemanissier} (building on the earlier work of~\cite[D\'efinition 8.5]{poineau3}).
The definition of a geometric base ring is quite technical, and it will not be repeated here. 
Instead, we record a large class of examples below (that includes most Banach rings considered in this paper).
}

\begin{example}\label{geometric base ring examples}
The main examples of geometric base rings are listed below:
\begin{enumerate}[label=(\roman*)]
    \item a field that is complete with respect to an absolute value;
    \item a hybrid field, as in~\cref{hybrid point};
    \item a dvr, equipped with the trivial norm; 
    \item the ring of integers of a number field, equipped with the trivial norm;
    \item the ring of integers of a number field, equipped with the maximum of the Archimedean norms induced by its complex embeddings.
\end{enumerate}
See~\cite[Remarque 8.6]{poineau3} and~\cite[Remarque 1.26]{lemanissier} for further examples and details.
\end{example}

\spa{
For a geometric base ring $A$, there is a category $\mathrm{An}_A$ of $A$-analytic spaces, which was constructed in~\cite[\S 2]{lemanissier}; it can be realized as a (not necessarily full) subcategory of the category $\mathrm{LRS}_A$ of locally $A$-ringed spaces.
In particular, any object $Z$ in $\mathrm{An}_A$ is equipped with a structure sheaf $\O_Z$ of analytic functions.
For example, if $A = (\C,|\cdot |_{\infty})$, then $\mathrm{An}_A$ is the category of complex-analytic spaces; if $A = k$ is a complete non-Archimedean field, then $\mathrm{An}_A$ is Berkovich's category of $k$-analytic spaces.
}

\spa{A key result of~\cite{lemanissier} is that the analytification of a scheme over $A$ (in the sense of \S\ref{topological analytifications}) is naturally equipped with the structure of an $A$-analytic space. 
More precisely, the following analogue of~\cite[Theorem 3.4.1]{berkovich} holds in this setting:

\begin{theorem}\cite[Th\'eor\`eme 4.4]{lemanissier}\label{analytifications}
Let $A$ be a geometric base ring and let $X$ be a finite-type $A$-scheme. 
The functor $\mathrm{An}_A \to (\mathrm{Sets})$, given by $$
\mathfrak{X} \mapsto \Hom_{\mathrm{LRS}_A}(\mathfrak{X},X),
$$
is representable by an $A$-analytic space $X^{\An}$ and a morphism $\rho_X \colon X^{\An} \to X$ of locally $A$-ringed spaces.
\end{theorem}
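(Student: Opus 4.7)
The plan is to adapt Berkovich's proof of the analogous representability result over a complete non-Archimedean field (\cite[Theorem 3.4.1]{berkovich}) to the setting of a general geometric base ring $A$, constructing $X^{\An}$ in three stages: affine spaces, closed subschemes of affine spaces, and then arbitrary finite-type schemes by gluing.

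First I would treat $X = \mathbf{A}^n_A$. The underlying topological space $\mathbf{A}^{n,\An}_A$ is already in hand from \S\ref{topological analytifications}; to promote it to an object of $\mathrm{An}_A$, I would exhaust it by the closed polydiscs
$$
\mathbf{D}(r) = \{ x \in \mathbf{A}^{n,\An}_A : |T_i(x)| \leq r_i \text{ for all } i \}
$$
as $r \in \mathbf{R}_{>0}^n$ varies. Each polydisc is a compact $A$-analytic space in the sense of \cite[\S 2]{lemanissier}, whose ring of global analytic functions is a Banach completion of $A[T_1,\ldots,T_n]$ in a Gauss-type norm determined by $r$. Gluing these polydiscs along the natural open inclusions yields $\mathbf{A}^{n,\An}_A$ as an object of $\mathrm{An}_A$, together with a canonical morphism $\rho \colon \mathbf{A}^{n,\An}_A \to \mathbf{A}^n_A$ in $\mathrm{LRS}_A$ induced on each polydisc by the map $A[T_1,\ldots,T_n] \to \Gamma(\mathbf{D}(r), \O)$.

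Next I would handle a closed subscheme $X = V(I) \hookrightarrow \mathbf{A}^n_A$ by defining $X^{\An}$ to be the closed analytic subspace of $\mathbf{A}^{n,\An}_A$ cut out by the ideal sheaf generated by $\rho^{\sharp}(I)$, and a routine argument via the diagonal shows this is independent of the chosen closed embedding. For a general finite-type $A$-scheme $X$, pick a finite cover by open affines $U_i$ each admitting a closed embedding into some affine space; the previous steps produce $U_i^{\An}$, and using that analytifications of open immersions are open immersions in $\mathrm{An}_A$ (the analytic refinement of~\cref{topological gaga}(i)) one glues the $U_i^{\An}$ along the $U_{ij}^{\An}$ to obtain $X^{\An}$ together with $\rho_X \colon X^{\An} \to X$.

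Finally I would verify the universal property. Given $\mathfrak{X} \in \mathrm{An}_A$ with a morphism $\varphi \colon \mathfrak{X} \to X$ in $\mathrm{LRS}_A$, gluing reduces the construction of a factorization through $\rho_X$ to the case where $X = \Spec(B)$ is affine and $\mathfrak{X}$ is a compact $A$-analytic space. There $\varphi$ is determined by the $A$-algebra map $B \to \Gamma(\mathfrak{X},\O_{\mathfrak{X}})$, and the induced set-theoretic lift $\mathfrak{X} \to X^{\An}$ sends $x \in \mathfrak{X}$ to the multiplicative seminorm on $B$ obtained by pulling back and composing with evaluation $\Gamma(\mathfrak{X},\O_{\mathfrak{X}}) \to \H(x)$; uniqueness is automatic. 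The main obstacle is to promote this set-theoretic lift to a morphism in $\mathrm{An}_A$: one must show that the pullback of the analytic structure on each polydisc inside $X^{\An}$ agrees with the corresponding image inside $\O_{\mathfrak{X}}$, as a morphism of sheaves of Banach algebras. This is exactly where the axioms of a geometric base ring enter the argument, via the local theory of compact $A$-analytic spaces developed in \cite[\S 2--3]{lemanissier}, which ensures that the structure sheaf of a polydisc is sufficiently rigid (its local rings are noetherian and its global sections detect morphisms) for this transport to go through.
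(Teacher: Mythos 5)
The paper does not prove this statement at all: it is imported wholesale from \cite[Th\'eor\`eme 4.4]{lemanissier}, so there is no in-paper argument to compare against, and the only thing the paper itself extracts is that the underlying space of $X^{\An}$ and the map $\rho_X$ agree with the topological construction of \S 2.1. Your outline --- polydisc exhaustion of $\mathbf{A}^n_A$, closed embeddings, gluing, and then the universal property with the transport of analytic structure correctly flagged as the crux where the geometric base ring axioms enter --- is exactly the Berkovich-style strategy (\cite[Theorem 3.4.1]{berkovich}) that the cited reference carries out, so as a plan it is the right one, with the caveat that it remains a plan: all the substance (in particular, that global analytic functions on a polydisc are controlled by a completed polynomial algebra, which is delicate over a general Banach ring) lives in the deferred local theory of \cite{lemanissier}.
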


\noindent It follows from the proof of~\cref{analytifications} that the topological space underlying $X^{\An}$ coincides with the analytification of $X$ from \S\ref{topological analytifications}, and the continuous map underlying $\rho_X$ is the kernel map. 
For this reason, we do not make any distinction in the notation.
}

\spa{
The analytification functor over a geometric base ring satisfies certain GAGA properties (see~\cite[\S 6.2]{lemanissier}).  
One such result is given below and we develop other partial results throughout the section in the case when the base is a hybrid field.
}

\begin{definition}
A morphism $\varphi \colon Y \to X$ of $A$-analytic spaces is \emph{flat} (resp.\ \emph{unramified}, \emph{\'etale}) if for every $y \in Y$, the local ring homomorphism $\O_{X,\varphi(y)} \to \O_{Y,y}$ is flat (resp.\ unramified, \'etale).
\end{definition}

\begin{theorem}\cite[Proposition 6.28]{lemanissier}\label{finite flat gaga}
Let $A$ be a geometric base ring in~\cref{geometric base ring examples}, and let $\varphi \colon Y \to X$ be a morphism between finite-type $A$-schemes.
If $\varphi$ is finite and flat, then $\varphi^{\An}$ is finite and flat. 
\end{theorem}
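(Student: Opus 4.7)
The result has two parts: topological finiteness and analytic flatness of $\varphi^{\An}$. The topological finiteness is immediate from~\cref{topological gaga}(iv), so the content lies entirely in the flatness claim, namely that for every $y \in Y^{\An}$ the induced local ring homomorphism $\mathcal{O}_{X^{\An}, \varphi^{\An}(y)} \to \mathcal{O}_{Y^{\An}, y}$ is flat.

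I would begin by reducing to the affine case. Because $\varphi$ is finite, the preimage of any affine open $U \subseteq X$ is again affine, so we may assume $X = \Spec(R)$ and $Y = \Spec(S)$, with $S$ a finite flat $R$-algebra. Fix $y \in Y^{\An}$ and set $x := \varphi^{\An}(y)$, $x' := \rho_X(x) \in X$, and $y' := \rho_Y(y) \in Y$. The localization $S_{y'}$ is flat over $R_{x'}$ by hypothesis, and the task is to bootstrap this algebraic flatness to flatness of $\mathcal{O}_{X^{\An},x} \to \mathcal{O}_{Y^{\An},y}$.

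The key technical ingredient is a base-change identity for analytic local rings along a finite morphism. Letting $y_1, \ldots, y_n$ denote the preimages of $x$ in $(\varphi^{\An})^{-1}(x)$ that lie over $y' \in Y$ under the kernel map, one expects a natural isomorphism
$$
\mathcal{O}_{X^{\An}, x} \otimes_{R_{x'}} S_{y'} \;\cong\; \prod_{j=1}^{n} \mathcal{O}_{Y^{\An}, y_j}.
$$
This should follow from the universal property of the analytification (\cref{analytifications}) applied to $\varphi$: any morphism of $A$-analytic spaces $\mathfrak{X} \to Y$ factors through $\mathfrak{X} \times_X Y$, and for finite $\varphi$, analytifying this fibre product over a small neighbourhood of $x$ should decompose it into the disjoint local pieces indexed by the $y_j$. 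Granting the identity, flatness is immediate: the tensor product on the left is flat over $\mathcal{O}_{X^{\An}, x}$, being the base change of the flat module $S_{y'}$ along $R_{x'} \to \mathcal{O}_{X^{\An}, x}$, and each factor $\mathcal{O}_{Y^{\An}, y_j}$, being a direct summand, inherits flatness.

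The main obstacle is establishing this base-change identity uniformly across all geometric base rings in~\cref{geometric base ring examples}. In the two classical cases—complex-analytic spaces and Berkovich spaces over a complete non-Archimedean field—the identity rests on either the Weierstrass preparation theorem or the local structure of finite flat morphisms of affinoids, both of which are standard. For the more exotic geometric base rings (hybrid fields, or number rings equipped with Archimedean norms), the analytic local rings are more delicate, especially near points where the character of $\M(A)$ changes, and one must invoke the local theory of~\cite{lemanissier} to run the same argument in a unified fashion.
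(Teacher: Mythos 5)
First, note that the paper does not prove this statement at all: it is quoted directly from \cite[Proposition 6.28]{lemanissier}, so there is no internal argument to compare yours against. Your outline does reproduce the standard skeleton of such a proof (and, in broad strokes, that of the cited reference): finiteness is \cref{topological gaga}(iv), one reduces to $X = \Spec(R)$, $Y = \Spec(S)$ with $S$ finite and flat over $R$, and flatness of each stalk map follows formally once one has the decomposition
$$
\mathcal{O}_{X^{\An},x} \otimes_{R} S \;\cong\; \prod_{y_j \in (\varphi^{\An})^{-1}(x)} \mathcal{O}_{Y^{\An},y_j},
$$
since the left-hand side is flat over $\mathcal{O}_{X^{\An},x}$ (base change of the flat $R$-module $S$) and each factor is a direct summand. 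That last deduction, and your localized variant of it with $S_{y'}$, is correct.

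The gap is that the decomposition itself---which is the entire content of the theorem---is asserted rather than proved, and the justification you offer for it does not work. The universal property of \cref{analytifications} only furnishes the natural comparison map $\mathcal{O}_{X^{\An},x} \otimes_{\mathcal{O}_{X,\rho_X(x)}} \mathcal{O}_{Y,\rho_Y(y_j)} \to \mathcal{O}_{Y^{\An},y_j}$: it characterizes $Y^{\An}$ as a representing object in $\mathrm{An}_A$ and says nothing about its stalks, so it can neither show that this map becomes an isomorphism after taking the product over the $y_j$, nor even that the finite $\mathcal{O}_{X^{\An},x}$-algebra $\mathcal{O}_{X^{\An},x} \otimes_R S$ splits into local factors in the first place. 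What is actually required is (a) that the stalks of the structure sheaf over a geometric base ring are Noetherian and Henselian, so that a finite algebra over them decomposes as a product of local rings, and (b) a Weierstrass-type division/preparation theorem over $A$ identifying those local factors with the stalks of $\mathcal{O}_{Y^{\An}}$ at the points of the fibre. Establishing (a) and (b) uniformly for all the rings in \cref{geometric base ring examples}---including near points of $\M(A)$ where the Archimedean and non-Archimedean behaviours meet---is precisely the hard local theory of \cite{poineau3} and \cite{lemanissier} that the paper is importing by citing the result. As written, your argument reduces the theorem to a lemma of essentially the same depth, which you then defer back to the same reference.
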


\spa{
We expect~\cref{finite flat gaga} to also hold for finite unramified morphisms, and hence for finite \'etale morphisms as well.
To do so, one would need to show that a (finite) morphism $\varphi \colon Y \to X$ of $A$-analytic spaces is unramified if and only if for all $x \in X$, the fibre $\varphi^{-1}(x)$ is a disjoint union of spectra of finite separable $\H(x)$-algebras. 
This requires developing a theory of modules of K\"ahler differentials as in~\cite[\S 3.3]{berkovich93}. 

For the sake of expediency, we prove a GAGA-type statement only for certain \'etale morphisms over a subfield of $\C$ equipped with the hybrid norm. 
To this end, we recall below the definition of the structure sheaf on the analytification of open subvarieties of affine space; it first appears in~\cite[\S 1.5]{berkovich}, and it is further developed in~\cite{poineau1,poineau2,lemanissier}.
}

\begin{definition}\label{structure sheaf}
Let $A$ be a geometric base ring, and let $U \subseteq \A^{n,\An}_A$ be an open subset. 
\begin{enumerate}[label=(\roman*)]
\item The ring $\mathcal{K}(U)$ of \emph{rational functions on $U$} is the localization
$$
\mathcal{K}(U) \coloneqq S_U^{-1} A[T_1,\ldots,T_n],
$$
where $S_U = \{ p \in A[T_1,\ldots,T_n] \colon p \not\in \ker(x) \textrm{ for all $x \in U$} \}$ is the multiplicative subset of polynomials that do not vanish on $U$.
\item The ring $\O(U)$ of \emph{analytic functions on $U$} are those functions
$$
f \colon U \to \bigsqcup_{x \in U} \H(x)
$$
such that for any $x \in U$, the following two conditions hold:
\begin{enumerate}
\item $f(x) \in \H(x)$; 
\item there is an open neighbourhood $V \subseteq U$ of $x$, and a sequence $(p_i)_{i \in \Z_{\geq 0}} \subseteq \mathcal{K}(V)$ such that $p_i \to f$ uniformly on $V$.
\end{enumerate}
\end{enumerate}
\end{definition}

\spa{The rings of~\cref{structure sheaf}(ii) give rise to a sheaf $\O_{\A^{n,\an}_A}$ of local rings on $\A^{n,\An}_A$ (in fact, one can make this definition for any Banach ring $A$, as explained in~\cite[\S 1.1.3]{poineau1}.
For a finite-type $A$-scheme $X$, the structure sheaf on the affine spaces can be used to construct the structure sheaf $\O_{X^{\An}}$ on $X^{\An}$ as follows: 
work affine-locally, so that $X$ admits an immersion into an affine space over $A$, which decomposes as
$$
X \stackrel{j}{\hookrightarrow} U \stackrel{i}{\hookrightarrow} \A^{n}_A,
$$
where $j$ is a closed immersion and $i$ is an open immersion. 
Then, $\O_{U^{\An}}$ is the restriction of $\O_{\A^{n,\An}_A}$ to $U^{\An}$, and $\O_{X^{\An}}$ is the quotient sheaf $\O_{U^{\An}}/ \mathcal{J}_X \cdot \O_{U^{\An}}$, where $\mathcal{J}_X \subseteq \O_U$ is the ideal sheaf of $X$ on $U$.

If $A = \C$ is equipped with the Archimedean norm, then this sheaf on $X^{\An} = X^h$ coincides with the usual sheaf of holomorphic functions on $X^h$. 
Further, if $A = \C$ is equipped with the hybrid norm, then $\O_{X^{\An}}$ can be related to $\O_{X^h}$ on the Archimedean fibres of $X^{\An}$, as is explained in the following comparison result.
%
%
}


\begin{lemma}\label{isomorphism local rings}
Let $k \subseteq \C$ be a subfield equipped with the hybrid norm,
and let $X$ be a finite-type $k$-scheme.
Suppose $k$ is not included in $\R$.
If $x \in \lambda^{-1}((0,1])$ corresponds to $(\rho,x') \in (0,1] \times X^h$, then there is a natural isomorphism 
\begin{equation}\label{nat isom local rings}
\O_{X^{\An},x} \stackrel{\simeq}{\longrightarrow} \O_{X^h,x'}
\end{equation}
of local rings. 
\end{lemma}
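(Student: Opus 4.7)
The plan is to reduce to the case of open subsets of $\A^{n,\An}_k$ and then to exhibit the map as restriction to an archimedean slice. Working affine-locally, $X$ embeds as $X \hookrightarrow U \hookrightarrow \A^n_k$ with the first map closed and the second open, and the excerpt shows that $\O_{X^{\An}}$ and $\O_{X^h}$ are both quotients of the respective structure sheaves on $U^{\An}, U^h$ by $\mathcal{J}_X$. An isomorphism $\O_{U^{\An}, x} \xrightarrow{\sim} \O_{U^h, x'}$ of local rings that carries polynomials on $\A^n_k$ to themselves then descends to the desired one. Using the homeomorphism $\lambda^{-1}((0,1]) \simeq (0,1] \times \A^{n,h}$ from~\cref{hybrid analytification}, a cofinal family of open neighbourhoods of $x = (\rho, x')$ is given by hybrid cylinders $W = (a,b) \times V$, where $[a,b] \subseteq (0,1]$ contains $\rho$ and $V \subseteq \A^{n,h}$ is an open neighbourhood of $x'$.

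The candidate map $\O_{\A^{n,\An}_k}(W) \to \O_{\A^{n,h}}(V)$ is restriction to the slice $\{\rho\} \times V$: given $f$ on $W$ and $y \in V$, set $\tilde f(y) \coloneqq f((\rho,y)) \in \H((\rho,y)) \simeq \C$. The central observation is that a polynomial $p \in k[T_1,\dots,T_n]$ satisfies $p((\sigma,y)) = p(y)$ in $\C$ for every $(\sigma,y) \in W$, so if $f$ is locally a uniform limit $p_i \to f$ of rational functions in the sense of~\cref{structure sheaf}, the value $f((\sigma,y)) = \lim_i p_i(y)$ depends only on $y$. The technical heart of the proof is to match uniform convergence in the hybrid and classical senses: since the norm on $\H((\sigma,y)) \simeq \C$ is $|\cdot|^\sigma$, on a compact slab $[a,b] \times K$ with $a > 0$ one has, once $\sup_K |p_i - \tilde f| \leq 1$, the inequalities $(\sup_K|p_i - \tilde f|)^b \leq \sup_{[a,b] \times K} |p_i(y) - \tilde f(y)|^\sigma \leq (\sup_K|p_i - \tilde f|)^a$. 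Hybrid uniform convergence on $[a,b] \times K$ is therefore equivalent to classical uniform convergence on $K$, which shows $\tilde f$ is holomorphic and yields a well-defined ring homomorphism on stalks.

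Injectivity is immediate, since $f \in \O_{\A^{n,\An}_k}(W)$ is determined by its values on the single slice $\{\rho\} \times V$. For surjectivity, given a classical holomorphic $\tilde f$ near $x'$, write it as a uniform limit of Taylor polynomials on a polydisc; each Taylor polynomial lies in $\C[T_1,\ldots,T_n]$ and must be approximated by an element of $k[T_1,\ldots,T_n]$. This reduces to approximating complex numbers by elements of $k$, and here the hypothesis $k \not\subseteq \R$ enters essentially: it guarantees $k$ contains $\Q$ together with some $z_0 \in \C \setminus \R$, so $\Q + \Q z_0$ is a $\Q$-subspace of $\C$ spanned by two $\R$-linearly independent vectors and hence dense in $\C$. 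With $k$ dense in $\C$, coefficient-wise approximation produces $q_i \in k[T_1,\ldots,T_n]$ with $q_i \to \tilde f$ uniformly on compacta of $V$, and the cylindrical extension $f((\sigma,y)) \coloneqq \tilde f(y)$ satisfies $q_i \to f$ in the hybrid sense on $[a,b] \times K$ by the inequalities above, so $f \in \O_{\A^{n,\An}_k}(W)$. The principal obstacle is precisely this approximation step, coupling the density of $k$ in $\C$ with the classical approximation of holomorphic functions by polynomials on polydiscs; once this is in place, the resulting ring isomorphism is one of local rings because the maximal ideal on each side consists of the germs vanishing at the distinguished point.
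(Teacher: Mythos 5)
Your proposal is correct and follows essentially the same route as the paper: reduce to $\A^n_k$, define the forward map by restricting a hybrid-analytic function to the $\rho$-slice, and invert it by cylindrical pullback along $(0,1]\times V' \to V'$. The paper simply asserts that these two operations preserve analyticity, whereas you supply the justification (the comparison of hybrid and classical uniform convergence on slabs $[a,b]\times K$ with $a>0$, and the density of $k$ in $\C$ coming from $k \not\subseteq \R$, which is needed to approximate Taylor polynomials by elements of $k[T_1,\ldots,T_n]$); these details are a genuine, if implicit, part of the argument and your treatment of them is sound.
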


\begin{proof}
The question is local, so we may assume $X$ is affine.
In this case, $X$ admits an immersion into an affine space over $k$. 
Thus, it suffices to produce the isomorphism when $X$ admits a closed or open immersion into $\A^{n}_k$, with the latter case being trivial.
If $X$ is closed in $\A^{n}_k$, then both rings in~\ref{nat isom local rings} are quotients of the corresponding local rings of $\A^{n,\An}_k$ and $\A^{n,h}_k$ by the extension of the ideal defining $X$, so it suffices to exhibit the isomorphism~\ref{nat isom local rings} when $X = \A^{n}_k$.

Now, assume $X = \A^n_k$. 
For an open neighbourhood $V \subseteq X^{\An}$ of $x$, let $V' \subseteq X^h$ be the neighbourhood of $x'$ corresponding to $V \cap \lambda^{-1}(\rho)$ under the usual homeomorphism $\lambda^{-1}(\rho) \simeq X^h$.
For $f \in \O_{X^{\An}}$, write $f'$ for the map 
$$
V' \to \bigsqcup_{w \in V'} \H_{V'}(w)
$$
given by $w \mapsto f(w^{\rho})$, where we identify $\H_{V'}(w)$ and $\H_{V}(w^{\rho})$ as explained in~\cref{hybrid analytification}. 
This rule $f'$ defines an analytic function on $V'$, and the assignment $f \mapsto f'$ gives a ring homomorphism $\O_{X^{\An}}(V) \to \O_{X^h}(V')$.
As every sufficiently-small open neighbourhood of $x'$ arises as $V'$ for some open neighbourhood $V$ of $x$ in $X^{\An}$, there is an induced map $\O_{X^{\An},x} \to \O_{X^h,x'}$ on stalks.
The inverse map is constructed as follows: if $f'$ is an analytic function defined on an open neighbourhood $V'$ of $x'$, then write $V$ for open neighbourhood of $x$ corresponding to $(0,1] \times V'$ under the isomorphism $\lambda^{-1}((0,1]) \simeq (0,1] \times X^h$, and let $f$ be the pullback of $f'$ to $V$. 
Thus, we have an isomorphism as in \ref{nat isom local rings}.
\end{proof}

\begin{corollary}\label{local_inversion}
Let $k \subseteq \C$ be a subfield equipped with the hybrid norm, and suppose $k$ is not included in $\R$.
If $\varphi \colon Y \to X$ is an \'etale morphism between finite-type $k$-schemes, then $\varphi^{\An}$ is \'etale over the Archimedean fibres; in particular, $\varphi^{\An}$ is a local homeomorphism over the Archimedean fibres.
\end{corollary}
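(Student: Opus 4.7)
The plan is to reduce to classical complex GAGA by exploiting the naturality of the comparison isomorphism from~\cref{isomorphism local rings}.

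First I would fix a point $y$ in the Archimedean locus of $Y^{\An}$. Writing $\rho = \lambda(y) \in (0,1]$, the homeomorphism $\lambda^{-1}((0,1]) \simeq (0,1] \times Y^h$ is functorial in $Y$ (as noted in~\cref{hybrid analytification}), so if $y$ corresponds to $(\rho,y')$ with $y' \in Y^h$, then $x \coloneqq \varphi^{\An}(y)$ corresponds to $(\rho,x')$ with $x' = \varphi^h(y')$. Inspecting the explicit construction of the isomorphism $\O_{Y^{\An},y} \simeq \O_{Y^h,y'}$ (and analogously for $X$) in the proof of~\cref{isomorphism local rings}, both isomorphisms are induced by pullback along the canonical identification of a neighbourhood of $y'$ (resp.\ $x'$) in the fibre $\lambda^{-1}(\rho)$ with an open subset of $Y^h$ (resp.\ $X^h$); in particular, they are natural with respect to morphisms of $k$-schemes. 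This produces a commutative square
\[
\begin{tikzcd}
\O_{X^{\An},x} \arrow{r} \arrow{d}{\simeq} & \O_{Y^{\An},y} \arrow{d}{\simeq} \\
\O_{X^h,x'} \arrow{r} & \O_{Y^h,y'},
\end{tikzcd}
\]
in which the vertical arrows are the isomorphisms of~\cref{isomorphism local rings} and the horizontal arrows are induced by $\varphi^{\An}$ and $\varphi^h$.

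Next I would invoke classical complex GAGA: since $\varphi$ is étale as a morphism of finite-type $k$-schemes (and base change along $k \hookrightarrow \C$ preserves étaleness), the complex analytification $\varphi^h$ is étale as a morphism of complex-analytic spaces. Consequently the bottom arrow of the diagram is an étale ring homomorphism, and commutativity of the square forces the top arrow to be étale as well. Since $y$ was an arbitrary point of the Archimedean locus, $\varphi^{\An}$ is étale over $\lambda^{-1}((0,1])$.

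For the local homeomorphism assertion, I would use the fact that an étale morphism of complex-analytic spaces is a local analytic isomorphism (by the implicit function theorem applied to a local presentation of $\varphi^h$); in particular, $\varphi^h$ is a local homeomorphism at $y'$. Combined with the naturality of the product decomposition $\lambda^{-1}((0,1]) \simeq (0,1] \times Y^h$, the restriction of $\varphi^{\An}$ to the Archimedean locus is locally modelled on $\mathrm{id}_{(0,1]} \times \varphi^h$, hence is a local homeomorphism. The only subtle point, if any, is verifying the naturality of the comparison isomorphism, but this is essentially formal once one unpacks the explicit description given in the proof of~\cref{isomorphism local rings}.
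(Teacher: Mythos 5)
Your proposal is correct and follows essentially the same route as the paper: transfer the local ring homomorphism to the complex-analytic side via \cref{isomorphism local rings}, apply classical GAGA (\cite[XII, 3.1]{sga1}) to see that $\varphi^h$ is \'etale, and then use the inverse function theorem together with the product decomposition $\lambda^{-1}((0,1]) \simeq (0,1] \times Y^h$ to deduce the local homeomorphism statement. The only difference is that you make the naturality of the comparison isomorphism explicit, which the paper leaves implicit.
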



\begin{proof}
For any $y \in \lambda_Y^{-1}((0,1])$, \cref{isomorphism local rings} shows that $\O_{X^{\An},\varphi^{\An}(y)} \to \O_{Y^{\An},y}$ is \'etale if and only if the map $\O_{X^h,\varphi^{\An}(y)'} \to \O_{Y^h,y'}$ is \'etale, which holds by the classical GAGA theorem; see e.g. \cite[XII, 3.1]{sga1}.
By the inverse function theorem, $\varphi^h$ is a local isomorphism at $y'$; in particular, $\varphi^h$ is a local homeomorphism at $y'$.
This occurs if and only if $\varphi^{\An}$ is a local homeomorphism at $y$, as required.
\end{proof}

\spa{\label{counterexample 5}
If $k$ is a subfield of $\R$ equipped with the Archimedean norm $|\cdot |_{\infty}$, 
then the conclusion of~\cref{local_inversion} does not necessarily hold. For example, consider the morphism 
$\varphi \colon \Spec( \R[T^{\pm 1}]) \to \Spec(\R[T^{\pm 1}])$ given by $T \mapsto T^2$: $\varphi$ is \'etale but $\varphi^{\hyb}$ is not a local homeomorphism at the point $x = \ev(1,i)$ (where the $\ev$-notation is as in~\cref{hybrid affine line example}). 
Indeed, $\varphi^{\hyb}$ is not injective on any open neighbourhood of $x$ because $x_n = \ev(1,-1+i/n)$ has two distinct $\varphi^{\hyb}$-preimages for any $n \geq 1$, and $x_n \to \varphi^{\hyb}(x)$ as $n \to +\infty$.

For this reason, it will be useful in the sequel to pass from a field $k \subseteq \R$ to the extension field $k[i]$ of $k$, which is a subfield of $\C$ that is also equipped with the hybrid norm and it is no longer contained in $\R$. 
To this end, for a finite-type $k$-scheme $X$, write 
$$
\pi_{k[i]/k} \colon X^{\hyb}_{k[i]} \to X^{\hyb}
$$
for the ground field extension map; see~\cite[\S 4.3]{lemanissier} for a general discussion.
If $X = \Spec(B)$ is an affine $k$-scheme, then $\pi_{k[i]/k}$ is the continuous map given by restricting a seminorm on $B \otimes_k k[i]$ to $B$. 
In particular, $\pi_{k[i]/k}$ is $\lambda$-equivariant.
}

\spa{\label{complex conjugation 5}
In order to study the hybrid affine line $\A^{1,\hyb}_k$ by passing to the field extension $k[i]/k$, we can use action of $\mathrm{Gal}(k[i]/k)$ on $\A^{1,\hyb}_{k[i]}$ by complex conjugation.
More precisely, if $\iota \colon k[i][T] \to k[i][T]$ is the $k$-algebra morphism sending $i \mapsto -i$ and $T \mapsto T$, then the \emph{complex conjugation map} $I$ is the homeomorphism of $\A^{1,\hyb}_{k[i]}$ given by
$$
|f(I(x))| \coloneqq |(\iota (f))(x)|
$$
for $f \in k[i][T]$. It satisfies $\pi_{k[i]/k} \circ I = \pi_{k[i]/k}$, and $I \circ I$ is the identity map. Moreover, for any $p\in k[T]$, the morphism $\varphi_p \colon \A^{1,\hyb}_{k[i]} \to\ A^{1,\hyb}_{k[i]}$ given by $T \mapsto p$ satisfies $\varphi_p \circ I = I \circ \varphi_p$. 
In fact, $\A^{1,\hyb}_k$ is the (topological) quotient of $\A^{1,\hyb}_{k[i]}$ by the action of $I$, and $\pi_{k[i]/k}$ is the quotient map. 
In \S 3, this will be used as follows: 
given a homotopy
$$
H \colon [0,1]\times\A^{1,\hyb}_{k[i]} \to \A^{1,\hyb}_{k[i]}
$$
satisfying
$I(H(t,I(x)))=H(t,x)$ for $(t,x) \in [0,1]\times\A^{1,\hyb}_{k[i]}$, there exists a unique homotopy 
$$
H' \colon [0,1]\times\A^{1,\hyb}_k \to \A^{1,\hyb}_k
$$ 
such that $\pi_{k[i]/k} ( H ( t , x ) ) = H' ( t , \pi_{k[i]/k} ( x ) )$ for $(t,x) \in [0,1]\times\A^{1,\hyb}_{k[i]}$.
}

\subsection{Lifting Strong Deformation Retractions Along Finite Branched Covers}

\spa{In this section,
we prove a lifting theorem (\cref{lifting_homotopy}) for strong deformation retractions along a \emph{finite branched cover}, i.e.\ a finite open continuous map of topological spaces.
The principal example of interest is the hybrid analytification of a finite flat morphism between varieties, which is finite and flat by~\cref{finite flat gaga}, and hence open by~\cite[Corollaire 5.17]{lemanissier}.
This topological result, though elementary, is crucial to the proof of~\cref{theorem 12}.
}

\begin{lemma}\label{sepration_open}
Let $\varphi \colon Y \to X$ be a finite branched cover of locally compact, Hausdorff spaces. 
For $x\in X$, write $\varphi^{-1}(x)=\{y_1,\ldots,y_n\}$. 
There exists a basis $\mathcal U_x$ of open neighbourhoods of $x$ such that for any $U\in\mathcal{U}_x$, we have
$$
\varphi^{-1}(U) = \bigsqcup_{i=1}^n V_{i,U},
$$
where $V_{i,U}$ is an open neighbourhood of $y_i$ for each $i$.
Moreover, for each $i$, the set $\{ V_{i,U} \colon U \in \mathcal{U}_x \}$ is a basis of open neighbourhoods of $y_i$.
\end{lemma}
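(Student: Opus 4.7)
The plan is to take $\mathcal{U}_x$ to be the collection of \emph{all} open neighbourhoods $U$ of $x$ such that $\varphi^{-1}(U)$ decomposes as a disjoint union of open neighbourhoods of the $y_i$'s, and then verify the two basis properties. The two essential tools are: (i) the Hausdorffness of $Y$, which lets us separate the finite set $\{y_1, \ldots, y_n\}$ by disjoint open neighbourhoods; and (ii) the finiteness of $\varphi$, which (together with local compactness of $X$) means that $\varphi$ is a closed map, so that images of closed sets under $\varphi$ are closed.

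The central construction is the following: given any collection of pairwise disjoint open sets $W_1, \ldots, W_n \subseteq Y$ with $y_i \in W_i$, define
$$
U \coloneqq X \setminus \varphi\Bigl( Y \setminus \bigsqcup_{i=1}^n W_i \Bigr).
$$
Since $\varphi$ is closed and the argument inside $\varphi$ is closed, $U$ is open; it contains $x$ because every $y_j$ lies in $\bigsqcup_i W_i$; and by definition any preimage of a point of $U$ must lie in $\bigsqcup_i W_i$. Thus $\varphi^{-1}(U) \subseteq \bigsqcup_i W_i$, and setting $V_{i,U} \coloneqq \varphi^{-1}(U) \cap W_i$ gives the required disjoint decomposition, exhibiting $U \in \mathcal{U}_x$.

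To show $\mathcal{U}_x$ is a basis of neighbourhoods of $x$, start from any open neighbourhood $U'$ of $x$. Pick disjoint open $W_i \ni y_i$ contained in the open set $\varphi^{-1}(U')$ (using that $Y$ is Hausdorff and the fibre is finite), and apply the construction to produce $U$. The only extra point to check is that $U \subseteq U'$, which follows by replacing $U$ with $U \cap U'$ (or, equivalently, by the same argument with $U'$ in place of $X$ in the definition of $U$).

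To show that $\{ V_{i,U} : U \in \mathcal{U}_x \}$ is a basis of neighbourhoods of $y_i$, start from any open $W \ni y_i$. Use Hausdorffness of $Y$ to shrink $W$ and choose disjoint open neighbourhoods $W_j$ of $y_j$ with $W_i \subseteq W$; apply the construction to obtain $U \in \mathcal{U}_x$ with $V_{i,U} \subseteq W_i \subseteq W$. The main (mild) obstacle of the whole argument is simply to arrange these two basis properties within a single family $\mathcal{U}_x$, and this is resolved by taking $\mathcal{U}_x$ as large as possible; the technical core is the single use of closedness of $\varphi$ that produces $U$ from the $W_i$'s.
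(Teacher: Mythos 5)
The paper states this lemma without proof, so there is nothing to compare against; on its own merits your argument is correct, and it is the standard one. The key step --- that for pairwise disjoint open $W_i \ni y_i$ the set $U = X \setminus \varphi\bigl(Y \setminus \bigsqcup_i W_i\bigr)$ is an open neighbourhood of $x$ with $\varphi^{-1}(U) \subseteq \bigsqcup_i W_i$ --- is exactly the right use of the closedness of $\varphi$ (note that in this paper ``finite'' already means closed with finite fibres, so you do not actually need to invoke local compactness to get closedness), and intersecting with $U'$ handles points of $U$ outside the image of $\varphi$. Two cosmetic remarks. First, with $\mathcal{U}_x$ defined as ``all $U$ admitting such a decomposition,'' the sets $V_{i,U}$ are not determined by $U$ alone (a component of $\varphi^{-1}(U)$ meeting no $y_j$ could be attached to different pieces), so the second assertion of the lemma is not quite well posed as you have set it up; the clean fix is to fix disjoint open sets $O_1,\ldots,O_n$ with $y_i \in O_i$ once and for all, let $\mathcal{U}_x$ consist of those open $U \ni x$ with $\varphi^{-1}(U) \subseteq \bigsqcup_i O_i$, and set $V_{i,U} \coloneqq \varphi^{-1}(U) \cap O_i$; your construction then shows both basis properties verbatim. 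Second, your argument never uses openness of $\varphi$, so the lemma holds for any closed continuous map with finite fibres into a Hausdorff target with $Y$ Hausdorff --- worth noting, but not a defect.
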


\begin{lemma}\label{lifting_path}
Let $\varphi \colon Y\to X$ be a finite branched cover of
locally compact, Hausdorff spaces.
Let $F$ be a closed subset of $X$, $U = X \backslash F$, and $\gamma \colon [0,1] \to X$ be a continuous map.
Suppose that
\begin{enumerate}[label=(\roman*)]
\item the restriction $\varphi\colon \varphi^{-1}(U)\to U$ is a local homeomorphism;
\item if $\gamma(t) \in F$ for some $t \in [0,1]$, then $\gamma(t)=\gamma(t')$ for all $t' \geq t$.
\end{enumerate}
For any $y\in\varphi^{-1}(\gamma(0))$, there is a unique continuous map $\tilde{\gamma} \colon [0,1] \to Y$ such that $\tilde{\gamma}(0)=y$ and $\tilde{\gamma}$ lifts $\gamma$ to $Y$,.
%
\end{lemma}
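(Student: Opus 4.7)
The plan is to split $[0,1]$ into the portion before $\gamma$ first enters $F$ and the (possibly degenerate) constant portion after. Let $T = \gamma^{-1}(F)$ and set $t_0 = \inf T$ with the convention $t_0 = 1$ when $T = \emptyset$. Since $F$ is closed and $\gamma$ is continuous, $T$ is closed in $[0,1]$; when $T$ is nonempty, hypothesis (ii) then forces $\gamma$ to be constant on $[t_0,1]$ with value $\gamma(t_0) \in F$, while $\gamma$ maps $[0,t_0)$ into $U$. The task thus reduces to constructing and uniquely characterizing the lift on $[0,t_0)$ and on $[t_0,1]$, and checking continuity at $t_0$.

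On the open part, the restriction $\varphi \colon \varphi^{-1}(U) \to U$ is a finite local homeomorphism between locally compact Hausdorff spaces. Combined with \cref{sepration_open}, this yields evenly covered neighbourhoods at each point of $\gamma([0,t_0))$, so $\varphi$ behaves like a covering map over the image of $\gamma|_{[0,t_0)}$. Standard path lifting, or equivalently the observation that two lifts starting at the same point must agree on a clopen subset (openness from the local-homeomorphism condition, closedness from the Hausdorff hypothesis), produces a unique continuous lift $\tilde\gamma \colon [0,t_0) \to \varphi^{-1}(U)$ with $\tilde\gamma(0) = y$.

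To extend across $t_0$, I invoke \cref{sepration_open} at $\gamma(t_0) \in X$: writing $\varphi^{-1}(\gamma(t_0)) = \{y_1,\ldots,y_n\}$, there is a basis of open neighbourhoods $U'$ of $\gamma(t_0)$ for which $\varphi^{-1}(U') = \bigsqcup_i V_{i,U'}$, with each family $\{V_{i,U'}\}_{U'}$ a neighbourhood basis of $y_i$. For such a $U'$, continuity of $\gamma$ yields $\epsilon > 0$ with $\gamma([t_0-\epsilon,t_0]) \subseteq U'$, and connectedness of $\tilde\gamma([t_0-\epsilon, t_0))$ forces this set to lie in a single component $V_{i_0,U'}$. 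Nestedness of the basis shows the index $i_0$ is independent of $U'$, so $\tilde\gamma(t) \to y_{i_0}$ as $t \to t_0^-$; declaring $\tilde\gamma(t) = y_{i_0}$ for all $t \in [t_0,1]$ then produces a continuous lift of $\gamma$ on the full interval.

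Uniqueness follows quickly: any other continuous lift agrees with $\tilde\gamma$ on $[0,t_0)$ by the earlier uniqueness argument, must equal $y_{i_0}$ at $t_0$ by continuity, and remains constant on $[t_0,1]$ because $\varphi^{-1}(\gamma(t_0))$ is a finite, hence discrete, subset of the Hausdorff space $Y$. The only delicate step is the extension across $t_0$, where the local-homeomorphism hypothesis fails; \cref{sepration_open} is precisely the tool that prevents the lift from splitting between distinct sheets as $t \to t_0^-$, and so I expect it to be the main obstacle to anticipate when writing the full proof.
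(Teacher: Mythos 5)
Your proposal is correct and takes essentially the same route as the paper's proof: decompose $[0,1]$ at the first time the path enters $F$, lift over $U$ via the covering/homotopy-lifting property, and use \cref{sepration_open} to pin down the single sheet $V_{i,U}$ containing the tail of the partial lift so that the constant extension by $y_i$ is continuous and unique. The only (harmless) cosmetic difference is that you fold the cases $\gamma([0,1])\subseteq U$ and $t_F=0$ into the general argument by convention, whereas the paper treats them separately.
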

\begin{proof}
If $\gamma(t) \in U$ for all $t \in [0,1]$, then the result follows from the homotopy lifting property for covering maps. 
Otherwise, let $t_F \in [0,1]$ be the minimal $t \in [0,1]$ such that $\gamma(t) \in F$. 
If $t_F = 0$, then $\gamma$ is constant, and we can take $\tilde{\gamma}$ to be the constant map with value $y$.

Assume now that $t_F > 0$.
Write $\varphi^{-1}(\gamma(t_F)) = \{ y_1,\ldots,y_n \}$, and let $\mathcal{U}_{\gamma(t_F)}$ be the basis of open neighbourhoods of $\gamma(t_F)$ as in~\cref{sepration_open}.
Again using the homotopy lifting property, there is a unique lift $\tilde{\gamma}_1 \colon [0,t_F) \to Y$ of $\gamma |_{[0,t_F)}$ such that $\tilde{\gamma}_1(0) = y$.
For any $U \in \mathcal{U}_{\gamma(t_F)}$, there is $s_U \in [0,t_F)$ such that $\gamma( (s_U,t_F)) \subseteq U$; in particular, $\tilde{\gamma}_1((s_U,t_F)) \subseteq V_{i,U}$ for some $i$, which is independent of the choice of $U$. 
Define the map $\tilde{\gamma} \colon [0,1] \to Y$ by
$$
t \mapsto \begin{cases}
\tilde{\gamma}_1(t), & t \in [0,t_F)\\
y_i, & t \in [t_F,1]
\end{cases}
$$
The map $\tilde{\gamma}$ clearly lifts $\gamma$, and it remains to show that it is continuous at $t_F$.
For any open neighbourhood $V$ of $y_i$, there exists $U \in \mathcal{U}_{\gamma(t_F)}$ such that $V_{i,U} \subseteq V$, and so $\tilde{\gamma}^{-1}(V)$ contains the open neighbourhood $(s_U,1]$ of $t_F$, as required.
\end{proof}

\begin{proposition}\label{lifting_homotopy}
Let $\varphi\colon Y\to X$ be a finite branched cover of 
locally compact, Hausdorff spaces, $F$ be a closed subset of $X$, $U = X\backslash F$, and $H \colon [0,1] \times X \to X$ be a strong deformation retraction of $X$ onto $F$. 
Suppose that
\begin{enumerate}[label=(\roman*)]
\item the restriction $\varphi:\varphi^{-1}(U)\to U$ is a local homeomorphism ;
\item if $H(t,x) \in F$ for some $(t,x) \in [0,1] \times X$, then $H(t,x)=H(t',x)$ for all $t' \geq t$.
\end{enumerate}
There is a unique strong deformation retraction $\widetilde{H} \colon [0,1] \times Y \to Y$ of $Y$ onto $\varphi^{-1}(F)$ that lifts $H$ to $Y$.
\end{proposition}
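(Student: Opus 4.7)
The plan is to define $\widetilde{H}$ pointwise by applying \cref{lifting_path} fibrewise to the flow-lines of $H$ through points of $Y$, and then to verify the formal properties (easy) and continuity (the substantive step). For each $y \in Y$, let $\gamma_y \colon [0,1] \to X$ be given by $\gamma_y(s) \coloneqq H(s, \varphi(y))$; hypothesis (ii) on $H$ implies that $\gamma_y$ satisfies hypothesis (ii) of \cref{lifting_path}, so that lemma produces a unique continuous lift $\tilde\gamma_y \colon [0,1] \to Y$ with $\tilde\gamma_y(0) = y$ and $\varphi \circ \tilde\gamma_y = \gamma_y$. I set $\widetilde{H}(t, y) \coloneqq \tilde\gamma_y(t)$. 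Uniqueness of $\widetilde{H}$ as a continuous lift follows from that of $\tilde\gamma_y$, and the deformation retraction properties are immediate: $\widetilde{H}(0, y) = y$ by construction; $\widetilde{H}(1, y) \in \varphi^{-1}(F)$ because $\varphi \circ \widetilde{H}(1, y) = H(1, \varphi(y)) \in F$; and for $y \in \varphi^{-1}(F)$ the path $\gamma_y$ is constant at $\varphi(y) \in F$, whose unique lift through $y$ is the constant path.

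For continuity at a point $(t_0, y_0) \in [0,1] \times Y$, set $y^* \coloneqq \widetilde{H}(t_0, y_0)$ and $z \coloneqq \varphi(y^*) = H(t_0, \varphi(y_0))$, and fix an open neighborhood $V$ of $y^*$. I split into two cases. \emph{Case 1: $z \in U$.} Hypothesis (ii) on $H$ prevents $\gamma_{y_0}|_{[0, t_0]}$ from meeting $F$ (else the path would be frozen in $F$ through $t_0$, contradicting $z \notin F$), so the compact arc $\tilde\gamma_{y_0}([0, t_0])$ lies in $\varphi^{-1}(U)$. On $\varphi^{-1}(U)$, $\varphi$ is a local homeomorphism by hypothesis (i) and finite (as the restriction of a finite map to the preimage of an open subset), hence a covering map of locally compact Hausdorff spaces; standard continuous dependence of path lifts on the initial point then yields a neighborhood of $(t_0, y_0)$ mapped by $\widetilde{H}$ into $V$. \emph{Case 2: $z \in F$.} Let $t_F \leq t_0$ be the first entry time of $\gamma_{y_0}$ into $F$, so $\gamma_{y_0}|_{[t_F, 1]} \equiv z$ and $\tilde\gamma_{y_0}|_{[t_F, 1]} \equiv y^*$. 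Writing $\varphi^{-1}(z) = \{y_1, \ldots, y_n\}$ with $y^* = y_j$, \cref{sepration_open} supplies a basic open $N \ni z$ with $\varphi^{-1}(N) = \bigsqcup_i V_{i, N}$ and $V_{j, N} \subseteq V$. Continuity of $\gamma_{y_0}$ at $t_F$, combined with constancy on $[t_F, 1]$, yields $s_1 < t_F$ with $\gamma_{y_0}([s_1, 1]) \subseteq N$, and the tube lemma applied to $H^{-1}(N)$ and the compact slice $[s_1, 1] \times \{\varphi(y_0)\}$ furnishes an open $W_X \ni \varphi(y_0)$ with $H([s_1, 1] \times W_X) \subseteq N$. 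For every $y' \in W \coloneqq \varphi^{-1}(W_X)$ the connected path $\tilde\gamma_{y'}|_{[s_1, 1]}$ is contained in $\bigsqcup_i V_{i, N}$, hence in a single component $V_{i(y'), N}$. To pin down $i(y') = j$ on a neighborhood of $y_0$, I invoke Case 1 at $(s_1, y_0)$, which is legitimate because $s_1 < t_F$ places $\gamma_{y_0}(s_1)$ in $U$, and shrink $W$ so that $\widetilde{H}(s_1, y') \in V_{j, N}$ for all $y' \in W$. This forces $\tilde\gamma_{y'}([s_1, 1]) \subseteq V_{j, N} \subseteq V$, and since $t_0 \in [t_F, 1] \subseteq (s_1, 1]$ the set $(s_1, 1] \times W$ is a neighborhood of $(t_0, y_0)$ on which $\widetilde{H}$ takes values in $V$.

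The main obstacle is the orchestration of Case 2: once the trajectory reaches the branch locus $\varphi^{-1}(F)$ the sheets coalesce, so I must simultaneously (a) separate the preimage fibre $\varphi^{-1}(z)$ into disjoint sheets using \cref{sepration_open}, (b) propagate containment in a small neighborhood $N$ of $z$ back along a strict buffer $[s_1, t_F]$ via the tube lemma, and (c) apply Case 1 at the strictly earlier time $s_1 < t_F$ to identify which sheet nearby lifts end up on. The strict inequality $s_1 < t_F$ is essential: with $s_1 = t_F$ the value $\gamma_{y_0}(s_1)$ would lie in $F$ and the covering-space argument of Case 1 would break down precisely at the branch point that we need to understand.
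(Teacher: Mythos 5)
Your construction and case analysis follow the paper's own proof almost exactly: define $\widetilde{H}$ fibrewise via \cref{lifting_path}, handle points with $H(t_0,\varphi(y_0))\in U$ by the covering-space homotopy lifting property on $\varphi^{-1}(U)\to U$, and handle points landing in $F$ by separating the sheets with \cref{sepration_open} and running a connectedness argument on the lifted arcs. Your Case~2, for $t_F>0$, is precisely the paper's Case~3.

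There is, however, one sub-case your argument does not cover: $\varphi(y_0)\in F$, i.e.\ $t_F=0$. Then $\gamma_{y_0}$ is constant, there is no $s_1<t_F$ in $[0,1]$, and the step ``invoke Case~1 at $(s_1,y_0)$, legitimate because $\gamma_{y_0}(s_1)\in U$'' is vacuous --- $\gamma_{y_0}(0)=z$ lies in $F$, so the covering-space argument you lean on to identify the sheet is unavailable exactly where you need it. The paper isolates this as a separate case, and the repair is short: take $s_1=0$, so the tube lemma gives $W_X\ni\varphi(y_0)$ with $H([0,1]\times W_X)\subseteq N$, and identify the sheet at time $0$ not via Case~1 but via the tautology $\widetilde{H}(0,y')=y'$; restricting to $y'\in\varphi^{-1}(W_X)\cap V_{j,N}$, the connected arc $\widetilde{H}([0,1]\times\{y'\})\subseteq\bigsqcup_i V_{i,N}$ meets $V_{j,N}$ at $t=0$ and hence stays in $V_{j,N}\subseteq V$. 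With that sub-case added, your proof is complete and coincides with the paper's.
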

\begin{proof}
By~\cref{lifting_path}, for any $y\in Y$, there exists a unique continuous map $\widetilde H(\cdot ,y)\colon[0,1]\to Y$ such that $\widetilde{H}(0,y)=y$ and that lifts $H(\cdot ,\varphi(y))\colon [0,1]\to X$. 
These can be combined to give a map $\widetilde{H} \colon [0,1] \times Y \to Y$, which clearly lifts $H$ to $Y$, i.e.\ $\varphi(\widetilde{H}(t,y)) = H(t,\varphi(y))$ for any $(t,y) \in [0,1] \times Y$.

It remains to show that $\widetilde{H}$ is continuous.
Fix $(t,y) \in [0,1] \times Y$. 
Write $\varphi^{-1}(H(t,\varphi(y))) = \{ z_1,\ldots,z_{\ell} \}$ and assume $z_1 = \widetilde{H}(t,y)$. 
Let $\mathcal{U}_{H(t,\varphi(y))}$ (resp. $\mathcal{U}_{\varphi(y)}$) be a basis of open neighbourhoods of $H(t,\varphi(y))$ (resp. $\varphi(y)$) as in~\cref{sepration_open}.
For $U \in \mathcal{U}_{H(t,\varphi(y))}$ (resp. $U'\in\mathcal{U}_{\varphi(y)}$), write $\varphi^{-1}(U) = \bigsqcup_{i=1}^{\ell} V_{i,U}$ (resp. $\varphi^{-1}(U') = \bigsqcup_{i=1}^{\ell} V_{i,U'}$). It suffices to show that for all $U \in \mathcal{U}_{H(t,\varphi(y))}$ sufficiently small $\widetilde{H}^{-1}(V_{1,U})$ contains a neighbourhood of $(t,y)$.

\begin{enumerate}
\item[Case 1.] Assume that $H(t,\varphi(y))\not\in F$. By the construction of $\widetilde{H}$, for any $U\in\mathcal{U}_{H(t,\varphi(y))}$ sufficiently small, there is a neighborhood $I \times V$ of $(t,y)$ and a continuous local section $\sigma \colon U \to V_{1,U}$ of $\varphi$ such that for any $(s,z) \in I \times V$, $\widetilde{H}(s,z) = \sigma(H(s,\varphi(z)))$. In particular, $\widetilde{H}^{-1}(V_{1,U})$ contains $I \times V$.
\item[Case 2.] Assume that $\varphi(y) \in F$, so
for any $s\in[0,1]$, $H(s,\varphi(y)) = \varphi(y)$. 
For any neighborhood $U$ of $\varphi(y)$, the continuity of $H$ guarantees that there exists a neighborhood $U'$ of $\varphi(y)$ such that $H([0,1]\times U')\subseteq U$. Write $V = V_{1,U'} \cap V_{1,U}$. 
We claim that $[0,1] \times V \subset \widetilde{H}^{-1}(V_{1,U})$. 
To this end, observe that for any $z \in V$, $\widetilde{H}(0,z) = z \in V_{1,U}$, the subset $\widetilde{H}([0,1],z)$ is connected subset of $\varphi^{-1}(U)$ by~\cref{lifting_path} and $\widetilde{H}([0,1],z)$ is contained in $\varphi^{-1}(U)=\bigsqcup_{i=1}^{\ell} V_{i,U}$. 
Thus, $\widetilde{H}([0,1],z)$ is a connected subset of $\bigsqcup_{i=1}^{\ell} V_{i,U}$ that has a non-empty intersection $V_{1,U}$, so $\widetilde{H}([0,1],z)$ must be included in $V_{1,U}$.

\item[Case 3.] Assume that $\varphi(y) \not\in F$ and $H(t,\varphi(y))\in F$. There is a smallest $t_1 \in [0,1]$ such that $\widetilde H(t_1,y)\in\varphi^{-1}(F)$. For any $s\geq t_1$, $\widetilde{H}(s,y)=\widetilde{H}(t_1,y)$; note that $t\geq t_1 > 0$ since $\varphi(y) \not\in F$ and $H(t,\varphi(y))\in F$.
For any $U\in\mathcal{U}_{H(t,\varphi(y))}$, there exists $U'\in\mathcal{U}_{\varphi(y)}$ and $t_2<t_1$ such that $H([t_2,1]\times U')\subseteq U$. 
After possibly shrinking $U'$ around $\varphi(y)$, we may assume that
$H(\{t_2\}\times U')\cap (X\backslash F)= \emptyset$. 
%
As in Case 1, the map $\widetilde{H}(t_2, \cdot )\colon V_{1,U'}\to \varphi^{-1}(U)$ is continuous, so there is a neighborhood $V \subseteq V_{1,U'}$ of $y$ such that $\widetilde{H}(\{t_2\}\times V)\subseteq V_{1,U}$. Now, as in Case 2, we can conclude that for any $z\in V$ $\widetilde{H}([t_2,1]\times\{z\})\subseteq V_{1,U}$. This implies that $\widetilde{H}^{-1}(V_{1,U})$ contains $[t_2,1]\times V$, as required.
\end{enumerate}
This completes the proof of the continuity of $\widetilde{H}$ at $(t,y)$.
\end{proof}

%


\section{The hybrid affine line over Archimedean fields}

%
\spa{
Let $k$ be a subfield of $\C$ equipped with the hybrid norm $\| \cdot \|_{\hyb} \coloneqq \max\{ |\cdot |_{\infty},| \cdot |_{0} \}$.
Let $\A^1_k = \Spec \left( k[T]\right)$ be the affine line over $k$, and let $\lambda \colon \A^{1,\hyb}_k \to [0,1]$ be the hybrid analytification as in~\cref{hybrid affine line example}.
Write $\A^{1,\triv}_k = \lambda^{-1}(0)$ for the non-Archimedean fibre, and $\A^{1,\arch}_k = \lambda^{-1}((0,1])$ for the Archimedean fibres.
For any subset $V \subseteq \A^{1,\hyb}_k$ and a positive constant $\delta > 0$, write $V(\delta)$ for the intersection of $V$ with the subset $\{ x \in \A^{1,\hyb}_k \colon \lambda(x) \leq \delta \}$; 
note that $V(\delta) = V$ for $\delta \geq 1$.
Similarly, write $V^{\triv} \coloneqq V(0) = V \cap \lambda^{-1}(0)$.
}

\subsection{The Local Structure of the Hybrid Affine Line}\label{local structure}

\spa{The goal of this section is to describe a cover of $\A^{1,\hyb}_k$ by compact subsets whose structure near
$\A^{1,\triv}_k$
is well understood.
The members of this cover will be \emph{hybrid closed discs} associated to a branch $[\eta_{p,0},\eta_{p,1})$ of $\A^{1,\hyb}_k$. The intersection of such a disc with $\A^{1,\triv}_k$ is $[\eta_{p,0},\eta_{p,1}]$, and the intersection with $\lambda^{-1}(1)$ is the union of the closed discs of radius $1$ centered at the roots of $p$. 

More generally, we will define a hybrid closed disc associated to a polynomial $p \in k[T]$.
%
}

\begin{definition}\label{hybrid closed disc}
Let $p \in k[T]$. 
The \emph{hybrid closed disc} $\D_k(p)$ around $p$ is the closure in $\A^{1,\hyb}_k$ of the subset
\begin{equation}\label{definition of disc}
\{ x\in\A^{1,\hyb}_k\colon |p(x)| < \chi(\lambda(x))\},
\end{equation}
where $\chi \colon \R_{\geq 0} \to \R_{\geq 0}$ denotes the continuous function
given by
$$
t \mapsto \chi(t) \coloneqq \begin{cases}
t^t, & t > 0\\
1, & t = 0.
\end{cases}
$$
For $z \in k$, write $\D_k(z) \coloneqq \D_k(T-z)$, and $\D_k \coloneqq \D_k(0)$.
For $\delta > 0$, write $\D_k(p,\delta ) \coloneqq \D_k(p)(\delta)$.
Note that if $t \in (0,1]$, then $\ev(w,t) \in \mathbf{D}_k$ if and only if $|w|_{\infty} \leq t$; in particular, the intersection of $\D_k$ with an Archimedean fibre is a closed disc in the usual sense, hence the name.

Further, define
\begin{equation}\label{definition of boundary of disc1}
\C_k(p) \coloneqq \D_k(p) \cap \{x\in\A^{1,\hyb}_k\colon |p(x)|=\chi(\lambda(x))\},
\end{equation}
and the subsets $\C_k(z)$, $\C_k$ and $\C_k(p,\delta)$ are defined analogously. 
Finally, for $\delta > 0$, set
\begin{equation}\label{definition of boundary of disc2}
\E_k(p,\delta) \coloneqq \C_k(p,\delta)\cup\{x\in\A^{1,\hyb}_k\colon \lambda(x)=\delta,\ |p(x)|\leq \chi(\lambda(x))\}.
\end{equation}
%
%
\end{definition}

\spa{\label{automorphism disc}
The hybrid closed disc $\D_k(p,\delta)$ can be related to $\D_k(\delta)$ as follows: if $\varphi_p \colon \A^1_k \to \A^1_k$ is the morphism given by $T \mapsto p(T)$, then the analytification $\varphi^{\hyb}_p$ sends $\D_k(p,\delta)$ onto $\D_k(\delta)$. 
In fact, if $p = T-z$ for some $z \in k$, then $\varphi^{\hyb}_p$ restricts to a homeomorphism from $\D_k(p,\delta)$ onto $\D_k(\delta)$.
}

\spa{\label{hybrid disc topology}
The space $\D_k(p)$ is compact, and the (topological) interior is given by
\begin{equation}\label{interior of disc}
\{x\in\A^{1,\hyb}_k\colon |p(x)|< \chi(\lambda(x))\}.
\end{equation}
It follows that if $p$ is a monic, irreducible polynomial and $x \in [\eta_{p,0},\eta_{p,1})$, then $\D_k(p,\delta)$ is a compact neighbourhood of $x$ in $\A^{1,\hyb}_k$ for any $\delta > 0$. 
}
\spa{
If $p$ is a monic, irreducible polynomial, then the intersection of $\D_k(p)$ with each fibre of $\lambda$ is a (union of) closed discs in the fibre. 
More precisely, 
\begin{enumerate}[label=(\alph*)]
    \item $\D_k(p) \cap \lambda^{-1}(0)$ is the interval $[\eta_{p,0},\eta_{p,1}]$;
    \item for $\rho > 0$, $\D_k(p) \cap \lambda^{-1}(\rho)$ is 
    the closed subset $\{ |p| \leq 1 \}$,
    where we have identified $\lambda^{-1}(\rho)$ with $\C$ or $\C/\lnot$ as in~\cref{hybrid affine line example}. In particular, $\D_k(p) \cap \lambda^{-1}(\rho)$ is a closed disc when $p$ is linear. 
\end{enumerate}
It follows that the intersections $\mathbf{E}_k(p) \cap \A^{1,\triv}_k$ and $\mathbf{C}_k(p) \cap \A^{1,\triv}_k$ consists only of $\{ \eta_{0,1} \}$.
}

%
%
%
%

\begin{lemma}\label{sec 3 nbhood basis}
Let $p \in k[T]$ be a monic, irreducible polynomial and $\delta > 0$. 
For $r \in [0,1]$, a basis of open neighborhoods of $\eta_{p,r}$ in $\D_k(p,\delta)$ is given by subsets of the form
$$
\D_k(p,\delta) \cap \{ x \in \A^{1,\hyb}_k \colon r-\epsilon < |p(x)| < r+\epsilon , \lambda(x) < \epsilon \}
$$
for $\epsilon \in (0,\min\{ r,\delta\})$.
%
\end{lemma}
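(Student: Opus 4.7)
The sets displayed in the statement are open in $\D_k(p,\delta)$ because the functions $x \mapsto |p(x)|$ and $\lambda$ are continuous on $\A^{1,\hyb}_k$, and they contain $\eta_{p,r}$ because $|p(\eta_{p,r})| = r$ while $\lambda(\eta_{p,r}) = 0$. The substance of the lemma is the cofinality among open neighborhoods: given any open $V \subseteq \D_k(p,\delta)$ containing $\eta_{p,r}$, some set of the specified form must fit inside $V$.

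I would establish this by contradiction, exploiting the compactness of $\D_k(p,\delta)$ (\cref{hybrid disc topology}). Write $W_\epsilon \coloneqq \D_k(p,\delta) \cap \{x \in \A^{1,\hyb}_k : r - \epsilon < |p(x)| < r+\epsilon,\ \lambda(x) < \epsilon\}$, and suppose $W_\epsilon \not\subseteq V$ for every $\epsilon > 0$. Then $F_\epsilon \coloneqq \overline{W_\epsilon \cap (\D_k(p,\delta) \setminus V)}$ is a nonempty closed subset of the compact Hausdorff space $\D_k(p,\delta)$; the family $\{F_\epsilon\}_{\epsilon > 0}$ is decreasing in $\epsilon$, so it has the finite intersection property. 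By compactness, $\bigcap_{\epsilon > 0} F_\epsilon \neq \emptyset$; pick $x_\infty$ in this intersection.

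The point $x_\infty$ lies in the closed set $\D_k(p,\delta) \setminus V$, and, from the inclusion $F_\epsilon \subseteq \{r-\epsilon \leq |p(x)| \leq r+\epsilon,\ \lambda(x) \leq \epsilon\}$ valid for each $\epsilon$, it satisfies $|p(x_\infty)| = r$ and $\lambda(x_\infty) = 0$. Hence $x_\infty$ lies in the trivially-valued slice $\D_k(p) \cap \A^{1,\triv}_k$, which equals the arc $[\eta_{p,0},\eta_{p,1}]$ by the description given just before the lemma. On this arc, a direct computation using the formula for $\eta_{p,s}$ recalled in~\cref{hybrid affine line example} gives $|p(\eta_{p,s})| = s$ for $s \in [0,1]$ (one uses that $p$ is separable, automatic in characteristic zero), and hence $x_\infty = \eta_{p,r}$. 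This contradicts $x_\infty \notin V$, completing the argument.

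The essential content of the proof is thus the classification of the trivially-valued slice of $\D_k(p)$ together with the fact that $|p|$ restricts to a faithful coordinate on the arc $[\eta_{p,0},\eta_{p,1}]$; both ingredients rest on the enumeration of points of $\A^{1,\triv}_k$ via monic irreducible polynomials. Using compactness (rather than sequential convergence) is important here: when $k$ is uncountable, $\eta_{p,1} = \eta_{0,1}$ lacks a countable basis of open neighborhoods in $\A^{1,\hyb}_k$, so a purely sequence-based argument need not be adequate.
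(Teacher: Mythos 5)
Your proof is correct and is essentially the paper's argument: both exploit compactness of $\D_k(p,\delta)$ to show that if no set of the given form fit inside $V$, a limit point outside $V$ would have to equal $\eta_{p,r}$ (you via the finite intersection property, the paper via a convergent subnet, which are interchangeable here). Your identification of the limit point — locating it on the arc $[\eta_{p,0},\eta_{p,1}]$ and using $|p(\eta_{p,s})|=s$ via separability — is a welcome justification of the step the paper only asserts, namely that $\bigcap_{\epsilon>0} V_\epsilon = \{\eta_{p,r}\}$.
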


\begin{proof}
It suffices to show that any open neighbourhood $U$ of $\eta_{p,r}$ in $\A^{1,\hyb}_k$ contains an open neighbourhood of the form $\{ r - \epsilon < |p| < r + \epsilon \}$ for some $\epsilon > 0$.
For any $\epsilon > 0$, we write
$$
V_\epsilon \coloneqq \D_k(p,\delta) \cap \{ x \in \A^{1,\hyb}_k \colon r-\epsilon \leq |p(x)| \leq r+\epsilon , \lambda(x) \leq \epsilon \},
$$
which is a compact neighbourhood of $\eta_{p,r}$ in $\D_k(p,\delta)$.
The intersection $\bigcap_{\epsilon > 0} V_\epsilon$ is equal to $\{\eta_{p,r}\}$, 
and hence it is included in $U$. 
We claim that there exists $\epsilon > 0$ such that $V_{\epsilon} \subseteq U$.
If not, then for any $\epsilon > 0$ there exists $x_{\epsilon} \in V_{\epsilon} \backslash U$. As $\D_k(p,\delta)$ is compact, the net $(x_{\epsilon})$ must have a convergent subnet with limit $x \in \D_k(p,\delta)$; as the $V_{\epsilon}$'s intersect in $\eta_{p,r}$, we must have $x = \eta_{p,r}$, a contradiction.
%
%
\end{proof}

%

\spa{
The homeomorphism-type of the hybrid closed discs can be understood in terms of the auxiliary construction below.
Consider the solid cylinder
$$
\DD \coloneqq \{ (z,t) \in \C \times [0,1] \colon |z|_{\infty} \leq 1 \},
$$
as well as the subset $\BB \coloneqq \{ (z,t) \in \DD \colon t = 0\}$ of $\DD$.
There is a continuous map $\nu \colon \BB \to [0,1]$ given by $(z,0) \mapsto |z|_{\infty}$, and 
let $\widetilde{\DD}$ be the quotient of $D \sqcup [0,1]$ by the equivalence relation $b \sim \nu(b)$ for $b \in B$; that is, 
$\widetilde{\DD}$ is the pushout of the inclusion $\BB \hookrightarrow \DD$ along $\nu$ 
in the category of topological spaces. 
Note that the space $\widetilde{\DD}$ is also compact, since it is a quotient of a compact space.

Furthermore, define a continuous map $\sigma \colon \DD \to\DD$ by $(z,t) \mapsto (\overline{z},t)$, where $\overline{z}$ denotes the complex conjugate of $z$. 
This descends to a continuous map $\widetilde{\sigma} \colon \widetilde{\DD} \to \widetilde{\DD}$, and let $\sim$ be the equivalence relation on $\widetilde{\DD}$ that it generates; that is, $x \sim y$ if and only if $\widetilde{\sigma}(x) = y$. 

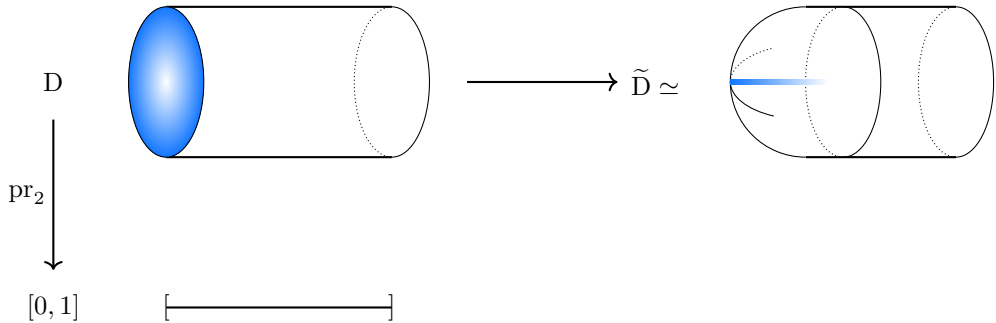
\begin{figure}[h]
  \centering
  \begin{tikzpicture}
    \draw (-1,-1) arc (-90:90:0.5cm and 1cm);
    \draw[densely dotted] (-1,1) arc (90:270:0.5cm and 1cm);
    \draw[thick] (-1,-1) -- (-4,-1);
    \draw[thick] (-1,1) -- (-4,1);
    \shadedraw[inner color=white,outer color=NavyBlue, draw=black] (-4,0) ellipse (0.5cm and 1cm);
    
    \draw[thick] (-4,-3) -- (-1,-3);
    \node at (-4,-3) {$[$};
    \node at (-1,-3) {$]$};
    \draw[thick,->] (-5.5,-0.5) -- (-5.5,-2.5);
    \node at (-5.5,0) {$\DD$};
    \node at (-5.5,-3) {$[0,1]$};
    \node at (-5.85,-1.5) {$\mathrm{pr}_2$};
    
    \draw[thick,->] (0,0) -- (2,0);
    \node at (2.5,0) {$\widetilde{\DD} \simeq$};

   \draw (4.5,0) circle (1cm);
    \draw (3.5,0) arc (180:245:1cm and 0.5cm);
    \draw[densely dotted] (3.5,0) arc (180:115:1cm and 0.5cm);
    \fill[white] (4.5,1.1) rectangle (6,-1.1);
    \draw (6.5,-1) arc (-90:90:0.5cm and 1cm);
    \draw[densely dotted] (6.5,1) arc (90:270:0.5cm and 1cm);
    \draw[thick] (4.5,1) -- (6.5,1);
    \draw[thick] (4.5,-1) -- (6.5,-1);
    \draw[densely dotted] (5,1) arc (90:270:0.5cm and 1cm);
    \shade[left color=NavyBlue, right color = white] (3.5,-0.03) -- (3.5,0.03) -- (4.8,0.03) -- (4.8,-0.03);
    \draw (5,-1) arc (-90:90:0.5cm and 1cm);
  \end{tikzpicture}
  \caption{The quotient map $\DD \twoheadrightarrow \widetilde{\DD}$ identifies the concentric circles in the fibre $\mathrm{B} = \mathrm{pr}_2^{-1}(0)$, giving rise to a topological space homeomorphic to a closed ball in $\R^3$. The subset $\mathrm{E} \subseteq \DD$, which consists of the cylindrical shell and the fibre $\pr_2^{-1}(1)$, is sent onto the boundary sphere of the closed ball under this homeomorphism.
  Note that the induced map $\widetilde{D} \to [0,1]$ is not the naive projection that one might envision from the figure.
  }
  \label{fig:hybrid_disc}
\end{figure}  

Finally, consider the subsets $\mathrm{C}\coloneqq\{ (z,t) \in \DD \colon |z|_{\infty}= 1 \}$ and $\mathrm{E} \coloneqq  \mathrm{C}\cup \{ (z,t) \in \DD \colon t=1 \}$. 
Write $\widetilde{\mathrm{C}}$ (resp.\ $\widetilde{\mathrm{E}}$) for the image of $\mathrm{C}$ (resp.\ of $\mathrm{E}$) under $\mathrm{D} \to \widetilde{\mathrm{D}}$.
The map $\sigma$ restricts to a homeomorphism of the subset $\mathrm{C}$ (resp.\ of $\mathrm{E}$), so we can form the quotient  $\widetilde{\mathrm{C}}/\lnot$ (resp.\  $\widetilde{\mathrm{E}}/\lnot$). 
This quotient coincides with the image of $\mathrm{C}$ (resp.\ $\mathrm{E}$) under the map $\widetilde{\DD} \to \widetilde{\DD}/\lnot$.
%
}



\begin{proposition}\label{sec3 prop 1}
Let $\delta>0$ and $z\in k$. 
\begin{enumerate}[label=(\roman*)]
\item If $k$ is not contained in $\mathbf{R}$, then there is a homeomorphism $f_{\delta} \colon \widetilde{\DD} \to \D_k(z,\delta)$ that restricts to a homeomorphism between $\widetilde{\mathrm{C}}$ and $\C_k(z,\delta)$, and between $\widetilde{\mathrm{E}}$ and $\E_k(z,\delta)$.
\item If $k$ is contained in $\mathbf{R}$, then there is a homeomorphism $f_{\delta} \colon\widetilde{\DD} / \lnot \to \D_k(z,\delta)$ that restricts to a homeomorphism between $\widetilde{\mathrm{C}}/\lnot$ and $\C_k(z,\delta)$, and between $\widetilde{\mathrm{E}}/\lnot$ and $\E_k(z,\delta)$.
\end{enumerate}
\end{proposition}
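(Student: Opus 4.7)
The plan is to construct an explicit continuous bijection $f_\delta$ and then invoke compactness. First, by~\cref{automorphism disc} applied to the translation $p = T - z$, the morphism $\varphi_p^{\hyb}$ induces a homeomorphism $\D_k(z,\delta) \xrightarrow{\simeq} \D_k(\delta)$ that respects the subspaces $\C_k(\cdot,\delta)$ and $\E_k(\cdot,\delta)$; hence it suffices to treat $z = 0$. One may also assume $\delta \in (0,1]$, since $\D_k(\delta) = \D_k(1)$ for $\delta \geq 1$.

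For case (i), with $k \not\subseteq \R$, define $f_\delta \colon \widetilde{\DD} \to \D_k(\delta)$ as follows. On the open cylinder $\DD \setminus \BB = \{(w,t) \in \DD : t > 0\}$, set
$$
f_\delta(w,t) \coloneqq \ev\bigl(\delta t\,|w|^{1/(\delta t) - 1}\,w,\ \delta t\bigr),
$$
with the convention that $f_\delta(0,t) = \ev(0,\delta t)$. On the pushed-out copy of $[0,1]$, set $f_\delta(s) \coloneqq \eta_{0,s} \in [\eta_{0,0},\eta_{0,1}]$. The key identity, obtained by direct computation, is
$$
|T(f_\delta(w,t))| = |w|\,\chi(\delta t) \qquad \text{for $t > 0$,}
$$
which links the formula on $\DD \setminus \BB$ to the parametrization $s \mapsto \eta_{0,s}$ of the branch $[\eta_{0,0},\eta_{0,1}]$ from~\cref{hybrid affine line example}. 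Compatibility with the gluing $(w,0) \sim |w|$ reduces to showing that $f_\delta(w,t) \to \eta_{0,|w_0|}$ as $(w,t) \to (w_0,0)$ in $\DD$, and this is the main technical step: for any $f = \sum a_i T^i \in k[T]$, factor $f(T) = a_j T^j (1 + o(T))$ with $a_j$ the lowest nonzero coefficient, then combine $|w'| = \delta t\,|w|^{1/(\delta t)} \to 0$ with $|w'|^{\delta t} = |w|\,\chi(\delta t) \to |w_0|$ to obtain $|f(w')|^{\delta t} \to |w_0|^j = \eta_{0,|w_0|}(f)$, using $|a_j|_0 = 1$ and $|w_0| \leq 1$.

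Bijectivity of $f_\delta$ splits cleanly across the partition $\widetilde{\DD} = (\DD \setminus \BB) \sqcup [0,1]$: on $\DD \setminus \BB$, the inverse is $\ev(w',\rho) \mapsto \bigl(e^{i\arg w'}(|w'|/\rho)^{\rho},\ \rho/\delta\bigr)$ (with the evident extension at $w' = 0$); on the $[0,1]$ component, the map $s \mapsto \eta_{0,s}$ bijects with $\D_k(\delta) \cap \lambda^{-1}(0) = [\eta_{0,0},\eta_{0,1}]$. Since $\widetilde{\DD}$ is compact (as a quotient of $\DD \sqcup [0,1]$) and $\D_k(\delta)$ is Hausdorff (as a subspace of $\A^{1,\hyb}_k$), the continuous bijection $f_\delta$ is automatically a homeomorphism. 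The restrictions to $\widetilde{\mathrm{C}}$ and $\widetilde{\mathrm{E}}$ match $\C_k(\delta)$ and $\E_k(\delta)$ directly via the key identity, together with $\eta_{0,1} \in \C_k$ for the point of $\widetilde{\mathrm{C}}$ lying over $[0,1]$. For case (ii), with $k \subseteq \R$, the formulas are equivariant under $\sigma(w,t) = (\overline{w},t)$ because $\ev(\overline{w'},\delta t) = \ev(w',\delta t)$ on polynomials in $k[T] \subseteq \R[T]$; thus $f_\delta$ descends to a continuous bijection $\widetilde{\DD}/\lnot \to \D_k(\delta)$, which is again a homeomorphism by the same compactness argument, and the boundary compatibilities follow formally from case (i).
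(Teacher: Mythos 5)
Your construction is essentially identical to the paper's: the same explicit map $(w,t)\mapsto \ev\bigl(\delta t\,|w|^{1/(\delta t)-1}w,\delta t\bigr)$ on $\DD\setminus\BB$, the same gluing of $s\mapsto\eta_{0,s}$ via the pushout, the same compactness-plus-Hausdorff argument to upgrade the continuous bijection to a homeomorphism, and the same descent through the conjugation quotient for $k\subseteq\R$. The only cosmetic difference is in verifying continuity along $\BB$: you test seminorm convergence against an arbitrary polynomial via its leading-term factorization, whereas the paper computes preimages of the basic neighbourhoods supplied by \cref{sec 3 nbhood basis}; both are valid.
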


\begin{proof}
After applying the automorphism $T \mapsto T - z$ as in~\cref{automorphism disc}, we may assume that $z=0$.
Assume first that $k$ is not contained in $\R$. 
Define a map $g_{\delta} \colon \DD \to \D_k(\delta)$ by the formula
$$
(w,s) \mapsto \begin{cases}
\ev\left( w\delta s |w|_{\infty}^{\frac{1}{\delta s}-1},\delta s \right), & s\in (0,1],\\
\eta_{0,|w|_{\infty}}, & s = 0.
\end{cases}
$$
The restriction of $g_{\delta}$ to $\DD \backslash \BB$ is a homeomorphism
$$
\DD\backslash \BB \stackrel{g_{\delta}}{\longrightarrow} \{ \ev(w,s) \in \A^{1,\hyb}_k \colon w \in \C, s \in (0,1], |w|_{\infty} \leq s \leq \delta \}.
$$
Indeed, it is clearly continuous, and it has a continuous inverse given by
$$
\ev(w,s) \mapsto \left( w s^{-s} |w|_{\infty}^{s-1} , \delta^{-1} s \right).
$$
To show the continuity of $g_{\delta}$ at a point $(w,0) \in B$, assume $r \coloneqq |w|_{\infty} \in (0,1)$. 
By~\cref{sec 3 nbhood basis}, it suffices to show that the $g_{\delta}$-preimage of the open set 
\begin{align*}
V_{r,\epsilon} &= \{ x \in \A^{1,\hyb}_k \colon r-\epsilon < |T(x)| < r+\epsilon, \lambda(x) < \epsilon \} \cap \D_k(\delta) \\
&= (\eta_{0,r-\epsilon},\eta_{0,r+\epsilon}) \sqcup \{ \ev(u,s) \colon u \in \C, s \in (0,\epsilon), r-\epsilon < |u|_{\infty}^s < r+\epsilon \}
\end{align*}
is open, for $\epsilon \in (0, \min\{ r, 1-r, \delta \})$. 
Observe that $|T(g_{\delta}(u,s))| = \chi(\delta s) |u|_{\infty}$ for $(u,s) \in \DD$, and hence
$$
g_{\delta}^{-1}(V_{r,\epsilon}) = \{ (u,s) \in \DD \colon (r-\epsilon) \chi(\delta s)^{-1} < |u|_{\infty} < (r+\epsilon)\chi(\delta s)^{-1}, \delta^{-1} s < \epsilon \},
$$
which is open because $\chi$ is continuous and non-vanishing. 
The cases when $w =0$ or $|w|_{\infty} = 1$ are analogous.
Thus, $g_{\delta}$ is continuous everywhere.

Define a continuous map $h_{\delta} \colon [0,1] \to \D_k(\delta)$ by the formula $r \mapsto \eta_{0,r}$. 
By construction, $g_{\delta} |_B = h_{\delta} \circ \iota$, so there is a unique continuous map $f_{\delta} \colon \widetilde{\DD} \to \D_k(\delta)$ such that the diagram
\begin{center}
\begin{tikzcd}
B \arrow[hook]{r}{} \arrow[swap]{d}{{\nu }} & \DD \arrow{d}{} \arrow[bend left]{rdd}{g_{\delta}} \\
{[0,1]} \arrow{r}{} \arrow[bend right]{rrd}{h_{\delta}} & \widetilde{\DD} \arrow[dashed]{rd}{\exists ! f_{\delta}} & \\
 & & \D_k(\delta)
\end{tikzcd}
\end{center}
is commutative. 
As $\widetilde{\DD}$ is compact and $\D_k(\delta)$ is Hausdorff, it suffices to show that $f_{\delta}$ is bijective.
As a set, $\widetilde{\DD}$ is a disjoint union of $[0,1]$ and $\DD \backslash \mathrm{B}$, and the restriction of $f_{\delta}$ to each is $h_{\delta}$ and $g_{\delta}$, respectively. Both restrictions are bijections onto their images, and the disjoint union of their images is all of $\D_k(\delta)$. Thus, $f_{\delta}$ is bijective, as required. 

Furthermore, it is easy to check that $f_{\delta}$ restricts to a homeomorphism of $\widetilde{\mathrm{C}}$ onto $\mathbf{C}_k(\delta)$, and of $\widetilde{\mathrm{E}}$ onto $\mathbf{E}_k(\delta)$. 
Similarly, when $k$ is included in $\mathbf{R}$, the same construction holds after precomposing $g_{\delta}$ with the quotient $\mathrm{D} \to \mathrm{D}/\lnot$ of $\mathrm{D}$ under complex conjugation.
\end{proof}

\spa{\label{D tilde is a ball}
As illustrated in~\cref{fig:hybrid_disc}, the space $\DD$ is a closed solid cylinder, and $\mathrm{E}$ is the boundary of $\DD$ 
with the open disc $\{ (z,t) \in \DD \colon |z|_{\infty}< 1, t=0\}$ removed.
For each $r \in [0,1]$, the quotient map $\DD \to \widetilde{\DD}$ identifies
all the points on the circle $\{ (z,t) \in \DD \colon |z|_{\infty}= r, t=0\}$;
it follows that 
$\widetilde{\DD}$ is homotopic to a closed ball in $\R^3$, and the inclusion $\widetilde{\mathrm{E}}\hookrightarrow \widetilde{\DD}$ is homotopic to the inclusion of the boundary sphere of the closed ball. 
In particular, after removing the image of the point $(z,t) = (0,0) \in \DD$ from $\widetilde{\DD}$, then the inclusion of $\widetilde{\mathrm{E}}$ becomes a homotopy equivalence.
Since $\widetilde{\mathrm{E}}$ is not contractible, it follows that this inclusion cannot be homotopic to a constant map. 
Applying the homeomorphism $f_{\delta}$ to this observation yields the following corollary of~\cref{sec3 prop 1}.
}

\begin{corollary}\label{corollary homotopy disc}
If $k$ is not contained in $\mathbf{R}$ and $z \in k$, then the inclusion $\E_k(z) \hookrightarrow \D_k(z) \backslash \{ \eta_{z,0}\}$ is not homotopic to a constant map. 
\end{corollary}

We will later use~\cref{corollary homotopy disc} to understand the possible homotopies of the inclusion map $\D_k(z) \hookrightarrow \A^{1,\hyb}_k$ for $z \in k$; see~\cref{non contractibility corollary 2}.
This is one of the key ingredients to~\cref{theorem 1}(2).

\begin{remark}
By arguing as in~\cite{poineau2}, one can show that the hybrid affine line $\A^{1,\An}_k$ is Fr\'echet--Urysohn and angelic when $k$ is not contained in $\R$. 
This can be done by exploiting the homeomorphism between a hybrid closed disc $\D_k(z)$ and a closed ball in $\R^3$, which exists by~\cref{sec3 prop 1}(i) and the observation in~\cref{D tilde is a ball}.
\end{remark}

\spa{\label{def of boundary}
The subset $\BB \cup \mathrm{C}$ of $\DD$ is homeomorphic to a cylindrical shell that is capped at one end and open at the other, and so one can imagine a strong deformation retraction of $\DD$ onto $\BB \cup \mathrm{C}$ obtained by ``pushing'' the interior of the cylinder towards the walls and the cap. 
This informal picture can, using~\cref{sec3 prop 1}, be used to construct strong deformation retractions of a hybrid closed disc onto certain subsets, namely
$$
\NNN_k(p,\delta)\coloneqq \C_k(p,\delta)\cup\D_k(p)^{\mathrm{triv}},
$$
and
$$
\NNN^+_k(p,\delta)\coloneqq \C_k(p,\delta)\cup\D_k(p)^{\mathrm{triv}}\cup\{x\in\A^{1,\hyb}_k\colon p(x)=0,\ \lambda(x)\leq \delta\}.
$$
The strong deformation retraction is constructed explicitly when $p = T$ in~\cref{example retraction} below, and the general case is~\cref{corollary retraction disc}.
}

\begin{example}\label{example retraction}
For any $\delta > 0$, there is a strong deformation retraction of $\D_k(\delta)$ onto $\mathbf{N}_k^+(\delta)$. 
By~\cref{sec3 prop 1}, it suffices to construct a strong deformation retraction of $\mathrm{D}$ onto $\mathrm{B} \cup \mathrm{C} \cup \{ (w,s) \in \mathrm{D} \colon w = 0 \}$.
Consider the homotopy $H \colon [0,1] \times \mathrm{D} \to \mathrm{D}$ given by
$$
\begin{array}{cccc}
     ( t , (w , s)) & \mapsto & \left\{\begin{array}{ll}
          ( 0 , s ) &\text{if } w = 0\\
          \left(\min\left(|w|,\dfrac{1-t}{4}\right)\dfrac{w}{|w|},\min(s,1-t+s-4|w|)\right)& \text{if }0 < |w| \leq \dfrac{s}{4}\text{ and }s \leq 1 \\
          \left(\left(\left(\dfrac{1}{2}-\dfrac{1-2|w|}{2-s}\right)\left(1-\dfrac{\min(s,1-t)}{2}\right)+\dfrac{1}{2}\right)\dfrac{w}{|w|},\min(s,1-t)\right)& \begin{array}{l}
                 \text{if }0 \leq s \leq 1,\ w \neq 0 \text{ and }\\
                 \dfrac{s}{4} \leq |w| \leq 1 - \dfrac{s}{4}
          \end{array}
           \\
          \left(\max\left(|w|,\dfrac{t+3}{4}\right)\dfrac{w}{|w|},\max(s,t+3+s-4|w|)\right)& \text{if }1 - \dfrac{s}{4} \leq |w| \leq 1\text{ and }s \leq 1
     \end{array}\right.
\end{array}
$$
Observe 
that for any $( t , (w , s) ) \in [ 0 , 1 ] \times \mathrm{D}$, we have $\sigma(H(t,\sigma(w,s)))=H(t,(w,s))$, where recall that $\sigma \colon \mathrm{D} \to \mathrm{D}$ is the complex conjugation given by $(w,s) \mapsto (\overline{w},s)$. 

\begin{figure}[h]
  \centering
  \begin{tikzpicture}
    \draw[outer color=NavyBlue] (-1,-2) arc (-90:90:1cm and 2cm);
    \draw[densely dotted,outer color=NavyBlue] (-1,2) arc (90:270:1cm and 2cm);
    \draw[densely dotted,inner color=white] (-1,0) ellipse (.8cm and 1.6cm);
    \draw[densely dotted,inner color=NavyBlue, outer color=NavyBlue] (-1,0) ellipse (.2cm and .4cm);
    \draw[densely dotted,outer color=NavyBlue] (-3,0) ellipse (.66cm and 1.33cm);
    \draw[densely dotted,inner color=white] (-3,0) ellipse (.2cm and .66cm);
    
    \draw[densely dotted] (-3,1.33) -- (-1,1.6);
    \draw[densely dotted] (-3,-1.33) -- (-1,-1.6);
    \draw[densely dotted] (-2.4,.55) -- (-1,.4);
    \draw[densely dotted] (-2.4,-.55) -- (-1,-.4);
    
    \draw[thick] (-1,-2) -- (-6,-2);
    \draw[thick] (-1,2) -- (-6,2);
    \shadedraw[inner color=white,outer color=NavyBlue, draw=black] (-6,0) ellipse (1cm and 2cm);
  \end{tikzpicture}
  \caption{
  An illustration of the strong deformation retraction $H$ of the cylinder $\mathrm{D}$ onto the subset $\mathrm{B} \cup \mathrm{C} \cup \{ (w,s) \in \mathrm{D} \colon w = 0 \}$ presented in~\cref{example retraction}.
  }
  \label{fig:retraction_hybrid_disc}
\end{figure}  
\end{example}

\begin{proposition}\label{corollary retraction disc}
Let $p \in k[T]$ be a 
monic, separable
polynomial. 
There exists $\Delta>0$ such that for any $\delta\in(0,\Delta]$, there exists:
\begin{enumerate}[label=(\roman*)]
\item a strong deformation retraction of $\D_k(p,\delta)$ onto $\NNN_k(p,\delta)$;
\item a strong deformation retraction of $\D_k(p,\delta)$ onto $\NNN^+_k(p,\delta)$.
\end{enumerate}
Furthermore, if $p$ is linear, then we can take $\Delta = 1$.
\end{proposition}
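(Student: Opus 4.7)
The plan is to reduce to the linear case $p = T - z$ by lifting an explicit deformation retraction through the finite branched cover $\varphi_p^{\hyb}$ associated with $\varphi_p \colon \A^1_k \to \A^1_k$, $T \mapsto p(T)$, via~\cref{lifting_homotopy}.

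I first handle the linear case with $\Delta = 1$. By~\cref{sec3 prop 1}, the homeomorphism $f_\delta$ identifies $\D_k(z, \delta)$ with $\widetilde{\DD}$ (or with $\widetilde{\DD}/\lnot$ when $k \subseteq \R$) so that $\C_k(z, \delta)$, $\D_k(z)^{\triv}$, and the rigid section $\{T = z,\, \lambda \leq \delta\}$ correspond to the images of $\mathrm{C}$, $\BB$, and the axis $\{w = 0\} \subseteq \DD$, respectively. The explicit homotopy $H$ of~\cref{example retraction} retracts $\DD$ onto $\BB \cup \mathrm{C} \cup \{w = 0\}$ and is equivariant under complex conjugation, so it descends through $\DD \twoheadrightarrow \widetilde{\DD}$ and (when needed) through the further quotient by $\lnot$, furnishing a strong deformation retraction of $\D_k(z, \delta)$ onto $\NNN^+_k(z, \delta)$; this proves (ii). For (i) I compose with the obvious retraction $(t, \ev(z, s)) \mapsto \ev(z, (1-t)s)$ that collapses the rigid section onto $\eta_{z, 0}$ and fixes $\NNN_k(z, \delta)$.

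For a general monic separable $p$, let $w_1, \ldots, w_m \in \bar{k}$ be the roots of $p'$; these satisfy $p(w_i) \neq 0$ since $p$ is separable, and I set $\Delta := \min\{1,\, \tfrac{1}{2}\min_i |p(w_i)|_\infty\}$, with the convention $\min \emptyset = +\infty$ so that $\Delta = 1$ recovers the linear case. Since $\varphi_p$ is finite and flat,~\cref{finite flat gaga} together with the openness statement recalled at the start of~\S\ref{local structure} implies $\varphi_p^{\hyb}$ is a finite branched cover, and using the identities $|T \circ \varphi_p^{\hyb}| = |p|$ and $\lambda \circ \varphi_p^{\hyb} = \lambda$ one verifies that $\varphi_p^{\hyb}$ restricts to a surjection $\D_k(p, \delta) \twoheadrightarrow \D_k(\delta)$ with $(\varphi_p^{\hyb})^{-1}(\NNN^+_k(\delta)) = \NNN^+_k(p, \delta)$. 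Assume first that $k \not\subseteq \R$; I apply~\cref{lifting_homotopy} with $F = \NNN^+_k(\delta)$ and $H$ the retraction from the linear case (with $z = 0$). Hypothesis (b) of~\cref{lifting_homotopy} follows by direct inspection of the piecewise formula in~\cref{example retraction}. For hypothesis (a), note that $U := \D_k(\delta) \setminus \NNN^+_k(\delta)$ lies in $\lambda^{-1}((0, \delta])$; the inequality $\delta \leq \Delta < |p(w_i)|_\infty$ prevents $\ev(w_i, \rho)$ from lying in $\D_k(p, \delta)$ for any $i$ and any $\rho \in (0, \delta]$, so $(\varphi_p^{\hyb})^{-1}(U)$ contains no critical point of $\varphi_p$ and~\cref{local_inversion} supplies the local homeomorphism property. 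The lifted retraction gives (ii); composing with the analogous collapse of each component of the rigid locus $\{p = 0,\, \lambda \leq \delta\}$ onto its endpoint (fixing $\NNN_k(p, \delta)$) yields (i).

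When $k \subseteq \R$ I carry out the argument over $k[i]$: the equivariance of $H$ under complex conjugation together with the uniqueness clause of~\cref{lifting_homotopy} forces the lifted retraction on $\D_{k[i]}(p, \delta)$ to commute with complex conjugation, and the descent procedure of~\cref{complex conjugation 5} then delivers the required retraction on $\D_k(p, \delta)$. The main obstacle is the calibration of $\Delta$: it must be chosen small enough that the ramification locus of $\varphi_p^{\hyb}$ does not intrude into the preimage of the lifting region $U$, and the condition $\Delta < \min_i |p(w_i)|_\infty$ is precisely what is needed. Once $\Delta$ is so chosen, the remainder of the proof is a direct application of the lifting machinery of~\cref{lifting_homotopy} and the local-inversion criterion~\cref{local_inversion} already assembled.
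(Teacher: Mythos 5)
Your proof is correct and follows essentially the same route as the paper's: reduce (i) to (ii) by collapsing the rigid sections, handle the linear case via \cref{sec3 prop 1} and \cref{example retraction}, lift through $\varphi_p^{\hyb}$ using \cref{lifting_homotopy} and \cref{local_inversion} after calibrating $\Delta$ to keep the critical points of $p$ out of $\D_k(p,\delta)$, and treat $k\subseteq\R$ by descent from $k[i]$ via equivariance and uniqueness of lifts. Your explicit threshold $\Delta=\min\{1,\tfrac12\min_i|p(w_i)|_\infty\}$ is a slightly cleaner (and directly verifiable) choice than the paper's ``minimum distance'' formulation, but the argument is the same.
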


\begin{proof}
First, remark that it suffices to show the retraction in (ii). 
Indeed, to construct the strong deformation retraction of (i), we may postcompose the homotopy in (ii) with a strong deformation retraction 
$$
G \colon [0,1] \times \mathbf{N}_k^+(p,\delta) \to \mathbf{N}_k^+(p,\delta)
$$
of $\mathbf{N}_k^+(p,\delta)$ onto $\mathbf{N}_k(p,\delta)$.
Such a map $G$ can be constructed as follows: if $\{ z_1,\ldots,z_{m} \} \subseteq \C$ are the roots of $p$, then one can take $G |_{[0,1] \times \mathbf{N}_k(p,\delta)}$ to be the identity and
$$
G(t,\ev(z_i,s)) = \begin{cases}
\ev(z_i, st) & t > 0,\\
\eta_{z_i,0} & t = 0,
\end{cases}
$$
for $t \in [0,1]$ and $s \in (0,\delta]$.
It is easy to verify that the map $G$ is continuous.

Thus, let us show (ii). 
If $\{ y_1,\ldots,y_{\ell}\} \subseteq \C$ are the roots of the derivative $p'$ in $\C$, then $p(y_i) \not=0$ for all $i$, since $p$ is separable.
In particular, there exists a threshold $\Delta > 0$ such that for any $\delta \in (0,\Delta]$, we have
$$
\D_k(p,\delta) \cap \{ x \in \A^{1,\hyb}_k \colon p'(x) = 0 \} = \emptyset.
$$
For example, we can take $\Delta$ less than the minimum (Archimedean) distance between a root of $p$ and of $p'$.

Assume first that $k$ is not included in $\mathbf{R}$. Fix $\delta \in (0,\Delta]$. 
Set $V \coloneqq \A^1_k \backslash \{ y_1,\ldots,y_{\ell} \}$, so that $\D_k(p,\delta) \subseteq V^{\hyb}$.
If $\varphi_p \colon \A^1_k \to \A^1_k$ is the morphism given by $T \mapsto p(T)$, then the restriction $\varphi_p |_V$ is \'etale. 
It follows from~\cref{local_inversion} that the analytification 
$$
\phi \coloneqq (\varphi_p |_V)^{\hyb} \colon V^{\hyb} \to \A^{1,\hyb}_k
$$
is a local homeomorphism over the Archimedean fibres of $\A^{1,\hyb}_k$, and hence over $U \coloneqq \D_k(\delta) \backslash \mathbf{N}^+_k(\delta)$. 
Let $F \coloneqq \D_k(\delta) \backslash U$ and let $H \colon [0,1] \times \D_k(\delta) \to \D_k(\delta)$ be a strong deformation retraction of $\D_k(\delta)$ onto $\mathbf{N}^+_k(\delta)$ as in~\cref{example retraction}. 
It is easy to verify that $\D_k(p,\delta) = \phi^{-1}(\D_k(\delta))$ and $\mathbf{N}_k^+(p,\delta) = \phi^{-1}(\mathbf{N}_k^+(\delta))$, and so~\cref{lifting_homotopy} shows that $H$ lifts to a strong deformation retraction $\widetilde{H}$ of $\D_k(p,\delta)$ onto $\mathbf{N}_k^+(p,\delta)$, which completes the proof of (ii).


Suppose now that $k$ is included in $\R$. 
Consider the ground field extension map $\pi_{k(i)/k} \colon \A^{1,\hyb}_{k[i]} \to \A^{1,\hyb}_k$ as in~\cref{counterexample 5}. 
The subset $\pi_{k[i]/k}^{-1}(\D_k(p,\delta))$ is equal to $\D_{k[i]}(p,\delta)$; note that $p$ may no longer be irreducible in $k[i][T]$.
As in the previous case, we may construct a strong retraction deformation $\widetilde{H}$ of $\D_{k[i]}(p,\delta)$ onto $\mathbf{N}_{k[i]}^+(p,\delta)$.
Now, to produce a strong retraction deformation of $\D_k(p,\delta)$ onto $\mathbf{N}_k^+(p,\delta)$ following~\cref{complex conjugation 5}, it suffices to demonstrate that  
$\widetilde{H}(t,x)=I\left(\widetilde{H}(t,I(x))\right)$ for $(t,x)\in[0,1]\times \D_{k[i]}(p,\delta)$.

By construction, the strong retraction deformation $H$ of~\cref{example retraction} satisfies this property. 
Observe that
for $(t,x)\in [0,1]\times \D_{k[i]}(p,\delta)$ we have
$$
\phi(I(\widetilde{H}(t,I(x))))=I(\phi(\widetilde{H}(t,I(x))))=I(H(t,\phi(I(x)))=H(t,x),
$$
where we have repeatedly used that $\phi \circ I = I \circ \phi$ (this holds since $p$ has coefficients in $k$).
Thus, 
the homotopy $(t,x)\mapsto I(\widetilde{H}(t,I(x)))$ is a lift of the  homotopy $H$ along $\phi$. 
The uniqueness of the lift in~\cref{lifting_homotopy} guarantees that $I(\widetilde{H}(t,I(x)))=\widetilde{H}(t,x)$ for all $(t,x)$, as required.
\end{proof}

\begin{corollary}\label{sec 3 prop 2 v2}
Let $p \in k[T]$ be a monic, irreducible polynomial.
There exists $\Delta > 0$ such that for any $\delta \in (0,\Delta]$, there exists:
\begin{enumerate}[label=(\roman*)]
\item a strong deformation retraction of $\A^{1,\hyb}_k(\delta)$ onto $\A^{1,\hyb}_k(\delta) \backslash \left( \D_k(p,\delta) \backslash  \NNN_k(p,\delta) \right)$;
\item a strong deformation retraction of $\A^{1,\hyb}_k(\delta)$ onto $\A^{1,\hyb}_k(\delta) \backslash \left( \D_k(p,\delta) \backslash  \NNN^+_k(p,\delta) \right)$;
\end{enumerate}
\end{corollary}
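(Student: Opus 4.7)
The plan is to deduce both (i) and (ii) directly from~\cref{corollary retraction disc} by extending the strong deformation retractions produced there by the identity on the complement of $\D_k(p,\delta)$. Since $k\subseteq\C$ has characteristic zero, the monic irreducible polynomial $p$ is automatically separable, so~\cref{corollary retraction disc} applies and yields a threshold $\Delta>0$ such that for every $\delta\in(0,\Delta]$ there is a strong deformation retraction
\[
H\colon [0,1]\times\D_k(p,\delta)\longrightarrow\D_k(p,\delta)
\]
of $\D_k(p,\delta)$ onto $\NNN_k(p,\delta)$ (respectively $\NNN^+_k(p,\delta)$). I would take the same $\Delta$ for the corollary.

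Fix $\delta\in(0,\Delta]$ and define $\widetilde{H}\colon [0,1]\times\A^{1,\hyb}_k(\delta)\to\A^{1,\hyb}_k(\delta)$ by
\[
\widetilde{H}(t,x)\coloneqq
\begin{cases}
H(t,x), & x\in\D_k(p,\delta),\\
x, & x\in\A^{1,\hyb}_k(\delta)\setminus\D_k(p,\delta).
\end{cases}
\]
The main point to verify is continuity. By~\cref{hybrid disc topology} the subset $\D_k(p,\delta)$ is closed in $\A^{1,\hyb}_k(\delta)$, and its topological boundary there is exactly $\C_k(p,\delta)$: any point with $|p(x)|<\chi(\lambda(x))$ is interior (including those with $\lambda(x)=\delta$, since small perturbations preserve the strict inequality), while any boundary point must satisfy $|p(x)|=\chi(\lambda(x))$. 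Since $\C_k(p,\delta)\subseteq \NNN_k(p,\delta)\subseteq\NNN^+_k(p,\delta)$ and $H$ is the identity on its target, both branches of the definition of $\widetilde{H}$ agree on this boundary. The gluing lemma, applied to the closed sets $\D_k(p,\delta)$ and $\overline{\A^{1,\hyb}_k(\delta)\setminus\D_k(p,\delta)}$, then gives the continuity of $\widetilde{H}$.

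The remaining verifications are essentially bookkeeping. The condition $\widetilde{H}(0,\cdot)=\mathrm{id}$ follows directly from $H(0,\cdot)=\mathrm{id}$; the condition that $\widetilde{H}(t,y)=y$ for every $y$ in the target follows because points outside $\D_k(p,\delta)$ are fixed by construction and points in $\NNN_k(p,\delta)$ (resp.\ $\NNN^+_k(p,\delta)$) are fixed by $H$; and $\widetilde{H}(1,\cdot)$ maps into the target because $H(1,\cdot)$ lands in $\NNN_k(p,\delta)$ (resp.\ $\NNN^+_k(p,\delta)$), while points outside the disc are already in the target.

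The only genuinely nontrivial step is the continuity check, and it hinges entirely on the inclusion $\C_k(p,\delta)\subseteq\NNN_k(p,\delta)$, which ensures that $H$ fixes the topological boundary of the disc and therefore glues seamlessly with the identity on the outside. Everything else is a direct consequence of the corresponding properties of $H$ obtained from~\cref{corollary retraction disc}.
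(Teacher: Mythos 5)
Your proposal is correct and follows essentially the same route as the paper: extend the retraction from \cref{corollary retraction disc} by the identity outside $\D_k(p,\delta)$ and glue along the closed cover, using that the relevant overlap is $\C_k(p,\delta)\subseteq\NNN_k(p,\delta)$ where $H$ acts as the identity. The observation that irreducibility over a characteristic-zero field gives separability (so that \cref{corollary retraction disc} applies) is a correct and worthwhile detail.
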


\begin{proof}
Let $\Delta > 0$ be as in~\cref{corollary retraction disc}, $\delta \in (0,\Delta]$, and let $H \colon [0,1] \times \D_k(p,\delta) \to \D_k(p,\delta)$ be a strong deformation retraction of $\D_k(p,\delta)$ onto $\NNN_k(p,\delta)$.
Consider the map $\widetilde{H} \colon [0,1] \times \A^{1,\hyb}_k(\delta) \to \A^{1,\hyb}_k(\delta)$ given by
$$
(t,x) \mapsto \begin{cases}
x & x \not\in \D_k(p,\delta),\\
H(t,x) & x \in \D_k(p,\delta).
\end{cases}
$$
The map $\widetilde{H}$ is a strong deformation retraction of the required form, provided it is continuous.
Observe that $(\A^{1,\hyb}_k(\delta) \backslash \D_k(p,\delta)) \cup \mathbf{C}_k(p,\delta)$ and $\mathbf{D}_k(p,\delta)$ form a closed cover of $\A^{1,\hyb}_k(\delta)$ with intersection equal to $\mathbf{C}_k(p,\delta)$, and $\widetilde{H}$ is continuous on each piece and the identity map on the intersection.
Thus, $\widetilde{H}$ is continuous.
\end{proof}

\spa{\label{variation on corollary}
If $p \in k[T]$ is reducible, then a variation on~\cref{sec 3 prop 2 v2} still holds: if $p = p_1\cdots p_N$ is a decomposition of $p$ into a product of monic irreducibles, then there is a threshhold $\Delta > 0$ such that for any $\delta \in (0,1]$, there is a strong deformation of $\A^{1,\hyb}_k(\delta)$ onto 
$$
\A^{1,\hyb}_k(\delta) \backslash \bigcup_{i=1}^N \left( \D_k(p_i,\delta) \backslash \mathbf{N}^+_k(p_i,\delta) \right).
$$
Indeed, pick $\Delta > 0$ small enough so that the interiors $\mathrm{int}\left( \D_k(p_i,\Delta) \right)$ are disjoint, and (after possibly shrinking $\Delta$) use the strong deformation retraction of~\cref{corollary retraction disc}(ii) for each disc $\D_k(p_i,\delta)$. 
}



\spa{\label{test label}
The \emph{hybrid closed disc at infinity}, denoted $\mathbf{D}_k(\infty)$, is the closure in $\A^{1,\hyb}_k$ of the subset  
$$
\{ x \in \A^{1,\hyb}_k \colon |T(x)| > \chi(\lambda(x))^{-1} \},
$$
where $\chi$ is as in~\cref{hybrid closed disc}. Observe that $\mathbf{D}_k(\infty)^{\mathrm{triv}} = [\eta_{0,1}, \eta_{0,\infty})$, and the hybrid closed discs $\D_k(p)$ (as $p$ ranges over all monic, irreducible polynomials in $k[T]$) and $\D_k(\infty)$ together form a closed cover of $\A^{1,\hyb}_k$.
By analogy with~\cref{definition of boundary of disc1} and~\cref{def of boundary}, for $\delta > 0$, set
$$
\mathbf{C}_k(\infty,\delta) \coloneqq \D_k(\infty,\delta) \cap \{ x \in \A^{1,\hyb}_k \colon |T(x)| = \chi(\lambda(x))^{-1} \},
$$
$$
\mathbf{N}_k(\infty,\delta) \coloneqq \C_k(\infty,\delta) \cup \D_k(\infty)^{\mathrm{triv}}.
$$
The following proposition is the analogue of~\cref{corollary retraction disc} and~\cref{sec 3 prop 2 v2} for the hybrid closed disc at infinity.
}

\begin{proposition}\label{def retraction at infinity}
For any $\delta \in (0,1]$, there exists:
\begin{enumerate}[label=(\roman*)]
    \item a strong deformation retraction of $\D_k(\infty,\delta)$ onto $\mathbf{N}_k(\infty,\delta)$;
    \item a strong deformation retraction of $\A^{1,\hyb}_k(\delta)$ onto $\A^{1,\hyb}_k(\delta) \backslash \left( \D_k(\infty,\delta) \backslash \mathbf{N}_k(\infty,\delta) \right)$.
\end{enumerate}
\end{proposition}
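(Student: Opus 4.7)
The plan is to transport the problem from $\D_k(\infty,\delta)$ to (a piece of) $\D_k(\delta)$ via the $\mathbf{G}_{m,k}$-automorphism $T \mapsto T^{-1}$ and then construct a suitable variant of the retraction from~\cref{corollary retraction disc}. Let $\varphi \colon \Spec(k[T,T^{-1}]) \to \Spec(k[T,T^{-1}])$ be the isomorphism $T \mapsto T^{-1}$. Its hybrid analytification $\varphi^{\hyb}$ is a homeomorphism of $\mathbf{G}_{m,k}^{\hyb}$ satisfying $|T(\varphi^{\hyb}(x))| = |T(x)|^{-1}$ and $\lambda(\varphi^{\hyb}(x)) = \lambda(x)$. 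Since $|T| \geq 1$ on $\D_k(\infty)$, we have $\D_k(\infty,\delta) \subseteq \mathbf{G}_{m,k}^{\hyb}$, and a direct computation shows that $\varphi^{\hyb}$ induces a homeomorphism $\D_k(\infty,\delta) \stackrel{\sim}{\to} \D_k(\delta) \cap \mathbf{G}_{m,k}^{\hyb}$ sending $\mathbf{C}_k(\infty,\delta)$ onto $\mathbf{C}_k(\delta)$ and $\D_k(\infty)^{\triv} = [\eta_{0,1},\eta_{0,\infty})$ onto $(\eta_{0,0},\eta_{0,1}]$; in particular $\mathbf{N}_k(\infty,\delta)$ corresponds to $\mathbf{N}_k(\delta) \setminus \{\eta_{0,0}\}$.

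For (i), it then suffices to construct a strong deformation retraction of $\D_k(\delta) \cap \mathbf{G}_{m,k}^{\hyb}$ onto $\mathbf{N}_k(\delta) \setminus \{\eta_{0,0}\}$. The retraction of~\cref{example retraction} is unsuitable, as it drives interior points to the $T = 0$ axis and, after the collapse in~\cref{corollary retraction disc}(i), to $\eta_{0,0} \notin \mathbf{G}_{m,k}^{\hyb}$. Instead, we work in the cylinder model via $f_\delta \colon \widetilde{\DD} \to \D_k(\delta)$ of~\cref{sec3 prop 1}: the $T = 0$ locus corresponds to the central axis $\{(0,s) : s \in [0,1]\} \subset \DD$, and $\mathbf{N}_k(\delta) \setminus \{\eta_{0,0}\}$ corresponds to the ``L-shape'' $(\BB \setminus \{(0,0)\}) \cup \mathrm{C}$. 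Parametrizing $\DD \setminus \{\text{axis}\}$ by $(u,s,\theta) = (|w|,s,w/|w|) \in (0,1] \times [0,1] \times S^1$, the target becomes $L \times S^1$ for $L \coloneqq ((0,1] \times \{0\}) \cup (\{1\} \times [0,1])$. Radial projection from the missing corner $(0,1)$ retracts the punctured square onto $L$: the ray from $(0,1)$ through $(u,s)$ meets $L$ at parameter $t^{\ast} = 1/\max(u,1-s)$, and linear interpolation between $(u,s)$ and $(t^{\ast}u,\,1 + t^{\ast}(s-1))$ yields a strong deformation retraction of $(0,1] \times [0,1]$ onto $L$. Being rotationally symmetric, the retraction extends trivially in the $S^1$-factor and descends through the pushout defining $\widetilde{\DD}$ (and through the complex conjugation quotient when $k \subseteq \R$). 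Pulling back by $(\varphi^{\hyb})^{-1}$ proves (i).

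Part (ii) follows from (i) by extending the retraction by the identity outside $\D_k(\infty,\delta)$, exactly as in the proof of~\cref{sec 3 prop 2 v2}: the closed cover $\{\D_k(\infty,\delta),\,\overline{\A^{1,\hyb}_k(\delta) \setminus \D_k(\infty,\delta)}\}$ of $\A^{1,\hyb}_k(\delta)$ has intersection $\mathbf{C}_k(\infty,\delta) \subseteq \mathbf{N}_k(\infty,\delta)$, on which the retraction from (i) is the identity, so the pasting lemma ensures continuity of the glued homotopy.

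The main obstacle is verifying continuity of the resulting retraction at the distinguished point $\eta_{0,1}$, where the Archimedean boundary circles $\mathbf{C}_k(\infty,\delta) \cap \lambda^{-1}(\rho)$ collapse as $\rho \to 0$. Under the identifications above, $\eta_{0,1}$ corresponds to the corner $(1,0)$ of the L-shape, and the continuity reduces to checking that the explicit image map $(u,s) \mapsto (u/\max(u,1-s),\,1+(s-1)/\max(u,1-s))$ is continuous at $(1,0)$; this is immediate from the formula, after unwinding the neighborhood basis described in~\cref{sec 3 nbhood basis}.
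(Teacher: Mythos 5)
Your proposal is correct and follows essentially the same route as the paper: reduce (ii) to (i) by gluing with the identity along $\mathbf{C}_k(\infty,\delta)$, transport $\D_k(\infty,\delta)$ to $\D_k(0,\delta)\cap\mathbf{G}_{m,k}^{\hyb}$ via $T\mapsto T^{-1}$, and then produce a strong deformation retraction of the punctured cylinder $\{(z,t)\in\DD\colon z\neq 0\}$ onto $\{t=0\}\cup\{|z|_\infty=1\}$ in the model of~\cref{sec3 prop 1}. The only difference is that the paper asserts this last retraction ``clearly exists,'' whereas you write it out explicitly (radial projection from the missing corner, extended rotation-invariantly and descended through the pushout), which is a harmless and welcome elaboration.
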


\begin{proof}
By arguing as in~\cref{sec 3 prop 2 v2}, it suffices to construct the retraction in (i). 
Now, fix $\delta \in (0,1]$ and consider the automorphism $\psi$ of the torus $U = \{ T \not= 0 \} \subseteq \A^{1}_k$ given by $T \mapsto T^{-1}$.
It is easy to see that $\psi^{\hyb}$ sends $\D_k(0,\delta) \cap U^{\hyb}$ and $\mathbf{N}_k(0,\delta) \cap U^{\hyb}$ homeomorphically onto $\D_k(\infty,\delta)$, and $\mathbf{N}_k(\infty,\delta)$, respectively. 
Thus, it suffices to construct a strong deformation retraction of $\D_k(0,\delta) \cap U^{\hyb}$ onto $\mathbf{N}_k(0,\delta) \cap U^{\hyb}$. 
Moreover, by~\cref{sec3 prop 1}, $\D_k(0,\delta) \cap U^{\hyb}$ is homeomorphic to the image of $\{ (z,t) \in \DD \colon z \not= 0 \}$ under the quotient map $\DD \to \widetilde{\DD}$, and $\mathbf{N}_k(0,\delta) \cap U^{\hyb}$ is sent to the image of 
\begin{equation}\label{subset of script D}
\{ (z,t) \in \DD \colon t = 0 \} \cup \{ (z,t) \in \DD \colon |z|_{\infty} = 1 \}
\end{equation}
under $\DD \to \widetilde{\DD}$. 
In particular, it is enough to find a strong deformation retraction of $\{ (z,t) \in \DD \colon z \not= 0 \}$ onto the subspace in~\cref{subset of script D}, and such a map clearly exists.
\end{proof}


\subsection{Contractibility of the Hybrid Affine Line over a Countable Archimedean Field}

\spa{
When $k$ is a countable subfield of the complex numbers equipped with the hybrid norm, we show below that the hybrid affine line $\A^{1,\hyb}_k$ admits a strong deformation retraction onto the non-Archimedean fibre; this is stated as~\cref{theorem 1}(1) in the introduction. 
The construction proceeds by patching together the strong deformation retractions of~\cref{sec 3 prop 2 v2} and~\cref{def retraction at infinity}.
}

\spa{
Let $k \subseteq \C$ be any subfield and set $X = \A^{1,\hyb}_k$.
For any $0 < \delta' \leq \delta \leq 1$, there is a strong deformation retraction $R \colon [0,1] \times X(\delta) \to X(\delta)$ of $X(\delta)$ onto $X(\delta')$.
%
For example, one can define a map $R$ at a point $(s,x) \in [0,1] \times X(\delta)$ by the formula
\begin{equation}\label{def retr onto smaller piece}
R(s,x) \coloneqq \begin{cases}
\ev(z, \min\{ t, \delta (1-s)+\delta' s \}), & x = \ev(z,t) \in \lambda^{-1}((0,\delta]),\\
x, & x \in \lambda^{-1}(0).
\end{cases}
\end{equation}
It is easy to verify that $R$ is continuous and that it is indeed a strong deformation retraction onto $X(\delta')$. 
}

\spa{\label{homotopy notation}
Let $f, f' \colon Y \to Z$ and $g, g' \colon X \to Y$ be continuous maps between topological spaces.
If $F$ is a homotopy from $f$ to $f'$ and $G$ is a homotopy from $g$ to $g'$, write $F \lhd G$ for the homotopy $[0,1] \times X \to Z$ between $f \circ g$ and $f' \circ g'$ given by the formula
$$
(F \lhd G)(t,x) \coloneqq \begin{cases}
F(0,G(2t,x)), & (t,x) \in [0,\frac{1}{2}] \times X,\\
F(2t-1,G(1,x)), & (t,x) \in [\frac{1}{2},1] \times X.
\end{cases}
$$
Beware that the above operation is not associative; for this reason, we adopt the notation
$$
F_n \lhd F_{n-1} \lhd \ldots \lhd F_2 \lhd F_1 \coloneqq  \left(\ldots \left( \left( F_n \lhd F_{n-1} \right) \lhd F_{n-2} \right) \ldots \lhd F_2 \right) \lhd F_1
$$
where $F_1,\ldots,F_n$ is a compatible collection of homotopies as above. 
This convention is chosen so that
\begin{equation}\label{iteration}
(F_n \lhd \ldots \lhd F_1)(1,\cdot) = (F_{n+1} \lhd F_n \lhd \ldots \lhd F_1)(1-2^{-n},\cdot)
\end{equation}
for any $n \geq 1$.
Moreover, it is clear that $F_n \lhd \ldots \lhd F_1$ is a strong deformation retraction if all $F_i$'s are so. 
}

\begin{theorem}\label{sec3 contractibility 2}
If $k$ is a countable subfield of $\C$, then there is a strong deformation retraction of $\A^{1,\hyb}_k$ onto the non-Archimedean fibre $\A^{1,\triv}_k$; in particular, $\A^{1,\hyb}_k$ is contractible.
\end{theorem}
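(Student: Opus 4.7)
The plan is to iteratively apply the strong deformation retractions from~\cref{corollary retraction disc} and~\cref{def retraction at infinity} to the hybrid closed disc around each monic irreducible polynomial of $k[T]$ and the disc at infinity, interleaved with radial shrinks of the $\lambda$-coordinate toward $\A^{1,\triv}_k$, and to assemble the resulting infinite sequence of homotopies into a single strong deformation retraction by means of the concatenation operation $\lhd$ from~\cref{homotopy notation}. The countability of $k$ is what allows us to exhaust all monic irreducible polynomials of $k[T]$ in this process.

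Concretely, enumerate the monic irreducible polynomials as $p_1, p_2, \ldots$, and for each $n \geq 1$ let $\Delta_n > 0$ be a threshold as provided by~\cref{corollary retraction disc}(ii). Choose a strictly decreasing sequence $(\delta_n)_{n \geq 0}$ with $\delta_0 = 1$ and $\delta_n \leq \min\{\Delta_n, \delta_{n-1}/2\}$, so that $\delta_n \to 0$. Let $F_0$ be the strong deformation retraction of $\A^{1,\hyb}_k$ onto $\A^{1,\hyb}_k \backslash (\D_k(\infty,1) \backslash \NNN_k(\infty,1))$ from~\cref{def retraction at infinity}(ii), and for each $n \geq 1$ set $F_n \coloneqq G_n \lhd R_n$, where $R_n$ is the radial shrink from $\A^{1,\hyb}_k(\delta_{n-1})$ onto $\A^{1,\hyb}_k(\delta_n)$ from~\cref{def retr onto smaller piece} and $G_n$ is the strong deformation retraction of $\A^{1,\hyb}_k(\delta_n)$ onto $\A^{1,\hyb}_k(\delta_n) \backslash (\D_k(p_n,\delta_n) \backslash \NNN^+_k(p_n,\delta_n))$ from~\cref{sec 3 prop 2 v2}(ii). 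Each $F_n$ is extended by the identity outside its natural domain, yielding a strong deformation retraction of the whole of $\A^{1,\hyb}_k$ that fixes $\A^{1,\triv}_k$ pointwise; an easy induction shows that the image of $H_n(1, \cdot)$ is contained in $\A^{1,\hyb}_k(\delta_n)$, where $H_n \coloneqq F_n \lhd \ldots \lhd F_0$.

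Piece the $H_n$'s into a candidate map $H \colon [0,1] \times \A^{1,\hyb}_k \to \A^{1,\hyb}_k$ by declaring, on each interval $[1 - 2^{-n}, 1 - 2^{-(n+1)}]$, that $H$ is a reparametrization of $F_{n+1}$ with basepoint $H_n(1, \cdot)$; the compatibility $H_n(1, \cdot) = H_{n+1}(1 - 2^{-n}, \cdot)$ from~\cref{iteration} ensures that $H$ is well-defined and continuous on $[0, 1) \times \A^{1,\hyb}_k$. Since $\lambda(H_n(1, x)) \leq \delta_n \to 0$, any limit point of $H_n(1, x)$ lies in $\A^{1,\triv}_k$, and one extends $H$ by setting $H(1, x)$ equal to this limit.

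The main technical obstacle is to verify that $\lim_n H_n(1, x)$ genuinely exists and that $H$ is continuous at $t = 1$. This will be carried out using the description of a basis of open neighborhoods of points of $\A^{1,\triv}_k$ in~\cref{sec 3 nbhood basis}, together with the fact that after processing the polynomials $p_1, \ldots, p_n$, the position $H_n(1, x)$ is constrained to lie in $\NNN^+_k(p_i, \delta_i)$ for some $i \leq n$ or outside all the discs $\D_k(p_j, \delta_j)$ with $j \leq n$, so that its behavior relative to every polynomial $p \in k[T]$ can be controlled as $n$ grows. Once continuity is in hand, the equalities $H(0, x) = x$ and $H(t, x) = x$ for $x \in \A^{1,\triv}_k$ are immediate from the corresponding properties of the $F_n$'s, so $H$ is the desired strong deformation retraction; contractibility of $\A^{1,\hyb}_k$ then follows since $\A^{1,\triv}_k$ is itself contractible, admitting a strong deformation retraction onto the Gauss point $\eta_{0,1}$ via $(t, \eta_{p, r}) \mapsto \eta_{p, \max\{r, t\}}$.
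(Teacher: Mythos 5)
Your plan coincides with the paper's: enumerate the monic irreducibles, alternate the disc retractions of~\cref{sec 3 prop 2 v2} and~\cref{def retraction at infinity} with the radial shrinks of~\cref{def retr onto smaller piece}, and concatenate with $\lhd$. The construction of the finite-stage homotopies $H_n$ is fine. The difficulty, however, is entirely concentrated in the step you defer: defining $H(1,\cdot)$ and proving continuity of $H$ at $t=1$. As written, your definition of $H(1,x)$ is not yet legitimate: knowing $\lambda(H_n(1,x)) \leq \delta_n \to 0$ only shows that every accumulation point of the sequence $(H_n(1,x))_n$ lies in $\A^{1,\triv}_k$, not that the sequence converges --- a priori it could wander among infinitely many branches. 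One must argue separately that (a) once $H_n(1,x)$ enters the retraction target attached to some $\D_k(p_i,\delta_i)$ its trajectory is controlled from then on (note that with your choice of $\NNN^+_k$ rather than $\NNN_k$, a point landing on the rigid section $\{p_i=0\}$ never stabilizes at any finite stage and only converges to $\eta_{p_i,0}$ in the limit, an extra convergence check the paper sidesteps by using version (i) of~\cref{sec 3 prop 2 v2} here); and (b) otherwise $H_n(1,x)\to\eta_{0,1}$, which uses that $H_n(1,x)$ avoids $\D_k(p_m,\delta_m)\setminus\NNN_k(p_m,\delta_m)$ for all $m\le n$ as well as the disc at infinity, forcing $|p(H_n(1,x))|\to 1$ for every polynomial $p$. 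Continuity at $(1,x)$ when $H(1,x)=\eta_{0,1}$ then rests on the fact that any neighbourhood $W$ of $\eta_{0,1}$ contains all but finitely many branches, so that $H(t,\cdot)^{-1}(X\setminus W)$ is eventually contained in finitely many of the sets $V_m$. The paper isolates exactly this package as~\cref{gluing sdrs}, whose hypotheses (i)--(vi) must be verified for the $H_n$'s; that lemma and its verification constitute the bulk of the proof, so announcing that the verification ``will be carried out'' leaves the essential content unproved.

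A minor further point: the contraction $(t,\eta_{p,r})\mapsto\eta_{p,\max\{r,t\}}$ of $\A^{1,\triv}_k$ fixes the points $\eta_{0,r}$ with $r>1$, so at $t=1$ it retracts onto the ray $\{\eta_{0,r}\colon r\ge 1\}$ rather than onto the single point $\eta_{0,1}$; one must compose with a further contraction of that ray. This is immediate to repair, but the formula as stated does not prove contractibility of the non-Archimedean fibre.
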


The strong deformation retraction of~\cref{sec3 contractibility 2} is constructed by gluing many of the strong deformation retractions constructed in~\cref{local structure}. 
This gluing procedure is summarized in the lemma below.

\begin{lemma}\label{gluing sdrs}
Let $X$ be a locally compact, Hausdorff space.
For all $n \in \Z_{\geq 0}$, let $H_n\colon[0,1]\times X\to X$ be a strong retraction deformation of $X$ onto its image. 
Let $Y\coloneqq\bigcap_{n\in\Z_{\geq 0}}H_n(1,X)$ and $y_0\in Y$.
 Let $\{ D_n \}_{n \in \Z_{\geq 0}}$ be an open cover of $X\backslash\{y_0\}$ such that $D_n\cap D_m\cap Y=\emptyset$ whenever $n \not= m$, and 
write $V_n\coloneqq H_n(1,\cdot)^{-1}(D_n)$. 
Suppose that the following conditions hold: for $n \in \Z_{\geq 0}$ and $(t,x) \in [0,1] \times X$,
\begin{enumerate}[label=(\roman*)]
    \item 
    if $t \geq 1-2^{-n}$ and $m > n$, then $H_{m}(t,x)=H_n(t,x)$;
    \item 
    $H_{n+1}(1-2^{-(n+1)},x)=H_n(1,x)$;
    \item 
    if $H_n(t,x)\in Y$, then 
    $H_n(t',x)=H_n(t,x)$ for any $t' \geq t$;
    \item 
    $H_n(1,X)\cap D_n\subseteq Y$;
    \item 
    if $m < n$ and 
    $t\in [1-2^{-(m+1)},1]$, then $H_n(t,\cdot)^{-1}(D_m)=H_m(1,\cdot)^{-1}(D_m)$;
    \item 
    if $t' \in [0,t]$, then $H_n(t,X)\subseteq H_n(t',X)$;
\end{enumerate}
Let $\mathcal{U}$ be a basis of open neighbourhoods of $y_0$ in $X$ with the property that for any open set $W\in\mathcal{U}$, there exists $n\in\Z_{\geq 0}$ such that $H_n(1,X)\backslash W\subseteq Y\cap(\bigcup_{m=0}^n D_m)$.
Finally, set
$$
\varphi(x) \coloneqq \begin{cases}
H_n(1,x), & x \in V_n,\\
y_0, & x \in X \backslash \bigcup_{n\geq 0} V_n.
\end{cases}
$$
and
$$
H(s,x) \coloneqq \begin{cases}
H_n(s,x), & (s,x) \in [0,1-2^{-n}] \times X,\\
\varphi(x), & s=1.
\end{cases}
$$
Then, the functions $\varphi$ and $H$ are well-defined and continuous, and $H$ is a 
strong retraction deformation of $X$ onto $Y$.
\end{lemma}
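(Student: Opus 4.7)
The plan is to verify first the well-definedness of $\varphi$ and $H$, then the three axioms of a strong deformation retraction, and finally continuity, which will be the heart of the argument.

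For well-definedness of $\varphi$, I first check that the $V_n$'s are pairwise disjoint. If $x \in V_n \cap V_m$ with $n < m$, then condition (i) applied at $t = 1$ gives $H_n(1,x) = H_m(1,x)$; this common value lies in $D_n \cap D_m$ by the definition of the $V$'s, and in $Y$ by (iv), contradicting $D_n \cap D_m \cap Y = \emptyset$. Well-definedness of $H$ then reduces to the $H_n$'s agreeing on overlaps of the intervals $[0, 1-2^{-n}]$: at the boundary time $t = 1-2^{-n}$ this is exactly (i) for $m > n$, and (ii) propagates the agreement consistently across all levels.

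The three axioms follow routinely. One has $H(0, x) = x$ and, for $s < 1$, $H(s, y) = y$ for $y \in Y$ because each $H_n$ is a strong deformation retraction onto its image $H_n(1, X) \supseteq Y$. At $s = 1$, any $y \in Y \setminus \{y_0\}$ lies in some $D_n$ (since $\{D_n\}$ covers $X \setminus \{y_0\}$) and is fixed by $H_n$, so $H_n(1,y) = y \in D_n$ gives $y \in V_n$ and $\varphi(y) = H_n(1, y) = y$; for $y = y_0$, either $y_0 \in V_n$ (in which case $\varphi(y_0) = H_n(1, y_0) = y_0$) or $y_0 \notin \bigcup_n V_n$ and $\varphi(y_0) = y_0$ by definition. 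Finally $\varphi$ lands in $Y$ by (iv) on each $V_n$ and by $y_0 \in Y$ elsewhere. Continuity of $H$ on $[0,1) \times X$ is then immediate, since locally $H$ coincides with some continuous $H_n$.

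The main obstacle is continuity at $(1, x_0)$. If $\varphi(x_0) \neq y_0$, then $x_0$ lies in a unique $V_n$, and condition (v) (with the roles of $m,n$ in the hypothesis transposed) gives a stable local description $V_n = H_m(t,\cdot)^{-1}(D_n)$ for all $m > n$ and $t \in [1-2^{-(n+1)}, 1]$; this both shows $V_n$ is open and reduces continuity to that of $H_n(1, \cdot)$, handling the case. The delicate situation is $\varphi(x_0) = y_0$, in particular $x_0 = y_0$. Given an open neighborhood $W$ of $y_0$, I would pick $W' \in \mathcal{U}$ with $W' \subseteq W$ and the integer $n$ from the cofinality hypothesis, so that $H_n(1, X) \setminus W' \subseteq Y \cap \bigcup_{m \leq n} D_m$. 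The plan is then to use (iii) and (vi) to show that a trajectory $s \mapsto H_k(s, x)$ which lands in $Y$ cannot subsequently escape $W'$, and to use (v) to prevent the trajectory of a nearby $x$ from exiting $W'$ through some $D_m$ with $m \leq n$; combined, these yield a sufficiently small product neighborhood of $(1, x_0)$ mapped into $W' \subseteq W$, giving continuity.
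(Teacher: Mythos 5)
Your outline follows the paper's proof quite closely: pairwise disjointness of the $V_n$'s for well-definedness, the three retraction axioms, continuity on $[0,1)\times X$ by local agreement with some $H_n$, and a case split at $s=1$ according to whether $x_0$ lies in some $V_n$. Two of your justifications, however, lean on the wrong hypotheses, and the hardest case is left as a plan. First, you invoke (i) at $t=1$ to get $H_n(1,x)=H_m(1,x)$. As printed, (i) does say ``$t\geq 1-2^{-n}$'', but that reading (take $n=0$) would force all the $H_m$ to coincide identically, contradicting the intended situation in which the images $H_n(1,X)$ strictly decrease; the condition is meant for $t\leq 1-2^{-n}$, which is exactly what is needed for the two branches of the definition of $H$ to agree and what the construction in the application provides. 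Under the corrected reading your disjointness argument loses its first step. It is repairable: for $x\in V_n$, (iv) gives $H_n(1,x)\in Y$, then (ii) gives $H_{n+1}(1-2^{-(n+1)},x)=H_n(1,x)\in Y$ and (iii) freezes the trajectory, so $H_m(1,x)=H_n(1,x)$ for all $m\geq n$. (The paper instead uses (v) at $t=1$ to rewrite $V_m=H_n(1,\cdot)^{-1}(D_m)$ and concludes $V_n\cap V_m=H_n(1,\cdot)^{-1}(D_n\cap D_m\cap Y)=\emptyset$.) The same freezing mechanism, not (v), is what you need in the case $x_0\in V_n$: condition (v) controls only the \emph{preimages} $H_k(t,\cdot)^{-1}(D_n)$, not the \emph{values} of $H_k(t,\cdot)$, so it cannot show that $H$ agrees with $H_n(1,\cdot)$ near $(1,x_0)$. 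What one actually proves is $H(s,z)=H_n(1,z)$ for all $(s,z)\in(1-2^{-(n+1)},1]\times V_n$, via (iv), (ii), (i) and (iii) as above.

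For the case $\varphi(x_0)=y_0$ your plan is the right one but stops before the decisive step, namely exhibiting the neighbourhood in the $X$-direction. The device is the closed set $C\coloneqq H(1-2^{-(n+1)},\cdot)^{-1}(X\setminus W')$. One first shows $H(t,X)\subseteq H_n(1,X)$ for $t\geq 1-2^{-(n+1)}$ (using (i), (ii), (vi)), so that $H(t,X)\setminus W'\subseteq Y\cap\bigcup_{m\leq n}D_m$ and hence, by (v), $H(t,\cdot)^{-1}(X\setminus W')\subseteq\bigcup_{m\leq n}V_m$. Then, if $H(t,z)\notin W'$ for some $t\geq 1-2^{-(n+1)}$, the point $H(t,z)$ lies in $Y$, so $z$ lies in some $V_m$ with $m\leq n$ and the trajectory is already frozen at time $1-2^{-(n+1)}$, giving $H(1-2^{-(n+1)},z)=H(t,z)\notin W'$, i.e.\ $z\in C$. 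Contrapositively, $(1-2^{-(n+1)},1]\times(X\setminus C)$ is a neighbourhood of $(1,x_0)$ mapped into $W'$ (note $x_0\notin C$ because $C\subseteq\bigcup_{m\leq n}V_m$ and $x_0$ lies in no $V_m$). Until this is written out, continuity at such points is asserted rather than proved.
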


\begin{proof}
In order to see that $\varphi$ is well-defined, it suffices to show that $V_n \cap V_m = \emptyset$ for $n \not= m$. 
Assume $m < n$.
By the assumption (iv), we have $H_n(1,\cdot)^{-1}(D_n) = H_n(1,\cdot)^{-1}(D_n \cap Y)$.
Using (v), we get that
$$
V_n \cap V_m = H_n(1,\cdot)^{-1}(D_n \cap Y) \cap H_n(1,\cdot)^{-1}(D_m) = H_n(1,\cdot)^{-1}(D_n \cap D_m \cap Y) = \emptyset,
$$
where the final equality holds by the assumption on the $D_n$'s.
Thus, $\varphi$ is well-defined. 
This implies, along with the assumption (i), that $H$ is also well-defined.

Now, we will demonstrate that $H$ is continuous. The proof is divided into three cases.
\begin{itemize}
\item[Case 1.] If $(t,x) \in [0,1) \times X$, then pick $n \in \Z_{\geq 0}$ such that $1-2^{-n} > t$. The restriction of $H$ to the open neighbourhood $[0,1-2^{-n}) \times X$ of $(t,x)$ coincides with $H_n$, and hence $H$ is continuous at $(t,x)$.

\item[Case 2.] If $x \in V_n$ for some $n$, 
then $H_n(1,x) \in Y$ by the assumption (iv).
By (ii) and (iii), for any $m > n$, we have
$$
H_m(1-2^{-(n+1)},x) = H_{n+1}(1-2^{-(n+1)},x) = H_n(1,x).
$$
The assumption (iii) implies that $H_m(s,x) = H_m(1-2^{-(n+1)},x) = H_n(1,x)$ for any $s \in [1-2^{-(n+1)},1]$. 
In particular, $H(s,x) = H_n(1,x)$ for $(s,x) \in [1-2^{-(n+1)},1] \times V_n$.
Thus, the restriction of $H$ to the open neighbourhood $(1-2^{-(n+1)},1] \times V_n$ of $(1,x)$ is equal to $H_n(1,\cdot)$, hence continuous.

\item[Case 3.] Suppose $x \not\in \bigcup_{n \geq 0} V_n$, 
so that $H(1,x) = y_0$.
Pick an open neighbourhood $W \in \mathcal{U}$ of $y_0$.
By assumption, there exists $n \in \Z_{\geq 0}$ such that $H_n(1,X) \backslash W \subseteq Y \cap \left( \cup_{m=0}^n D_m \right)$.

For any $z \in X$, (ii) implies that $H_{n+1}(1-2^{-(n+1)},z) = H_n(1,z)$; in addition, (i) asserts that $H_k(1-2^{-(n+1)},z) = H_{n+1}(1-2^{-(n+1)},z)$ for $k > n$.
It follows that $H_k(1-2^{-(n+1)},X) \subseteq H_n(1,X)$ for $k > n$, and hence (vi) implies that 
\begin{equation*}\label{temp inclusion 2}
H_k(t,X) \subseteq H_n(1,X)
\end{equation*}
for $k > n$ and $t \in [1-2^{-(n+1)},1]$.
In particular, $H(t,X) \subseteq H_n(1,X)$ for such $t$.
It follows that
$$
H(t,X) \backslash W \subseteq Y \cap \left( \bigcup_{m=0}^n D_m \right)
$$
for all $t \in [1-2^{-(n+1)},1]$, and hence
\begin{equation}\label{temp inclusion 3}
H(t,\cdot)^{-1}(X \backslash W) \subseteq \bigcup_{m=0}^n H(t,\cdot)^{-1}(D_m).
\end{equation}
For each fixed $t \in [1-2^{-(n+1)},1]$, there is a $k = k(t) > n$ such that 
$$
H(t,\cdot)^{-1}(D_m) = H_k(t,\cdot)^{-1}(D_m) = H_m(1,\cdot)^{-1}(D_m) = V_m,
$$
where the second equality follows from (v). Combining this with~\cref{temp inclusion 3}, we see that
\begin{equation}\label{temp inclusion 4}
H(t,\cdot)^{-1}(X\backslash W) \subseteq \bigcup_{m=0}^n V_m
\end{equation}
for any $t \in [1-2^{-(n+1)},1]$.

Now, set $C \coloneqq H_{n+1}(1-2^{-(n+1)},\cdot)^{-1}(X \backslash W)$; by the definition of $H$, $C = H(1-2^{-(n+1)},\cdot)^{-1}(X\backslash W)$.
As $H_{n+1}(1-2^{-(n+1)},\cdot)$ is continuous, $C$ is closed.
We claim that for any $(t,z) \in [1-2^{-(n+1)},1] \times (X\backslash C)$, $H(t,z) \in W$. 
Suppose that there is a pair $(t,z)$ with $H(t,z) \not\in W$, then
$$
H(t,z) \in H(t,X) \backslash W \subseteq Y \cap \left( \bigcup_{m=0}^n D_m \right).
$$
Thus, the inclusion~\cref{temp inclusion 4} implies that $z \in \bigcup_{m=0}^n V_m$. In this case, $H_n(1,z) \in Y$ by (iv), and so (iii) implies that
$$
H(t,z) = H_{n+1}(1-2^{-(n+1)},z) = H_n(1,z).
$$
That is, $z \in C$, a contradiction.

Therefore, we conclude that for any $(t,z) \in [1-2^{-(n+1)},1] \times (X \backslash C)$, $H(t,z)$ lies in $W$. 
As $x \in X \backslash C$ by construction, this demonstrates the continuity of $H$ at $(1,x)$.
\end{itemize}

It remains to show that $H$ is a strong deformation retraction onto $Y$. 
For any $x \in X$, if $x \in V_n$ for some $n$, then $H(1,x) = H_n(1,x) \in D_n$, and hence $H(1,x)$ lies in $Y$ by the assumption (iv); if $x$ does not lie in any $V_n$, then $H(1,x) = y_0 \in Y$. 
In particular, $H(1,X) \subseteq Y$. 

For $x \in Y$ and $t \in [0,1]$, we must show that $H(t,x) = x$. If $t \in [0,1)$, then there exists $n \in \Z_{\geq 0}$ such that $H(t,x) = H_n(t,x)$, and $H_n(t,x) = x$ by the assumption (iii). 
Assume now that $t = 1$.
If $x \in V_n$ for some $n$, 
then $H(1,x) = H_n(1,x) = x$, again by (iii). 
On the other hand, if $x \not\in \bigcup_{n \geq 0} V_n$, then we claim that $x = y_0$, and hence is fixed by $H(1,\cdot)$.
Indeed, (iii) guarantees that $V_n \cap Y = D_n \cap Y$ for any $n$, and hence $\{ V_n \cap Y \}_{n \in \Z_{\geq 0}}$ is an open cover of $Y \backslash \{ y_0 \}$. As $x$ does not lie in any member of this cover, we must have $x = y_0$.
This completes the proof.
\end{proof}

\begin{proof}[Proof of~\cref{sec3 contractibility 2}]
Throughout the proof, write $X = \A^{1,\hyb}$ and $X^{\triv} = \A^{1,\triv}$.
Let $(p_n)_{n \geq 1}$ be an enumeration of the monic irreducible polynomials of $k[T]$. 
For each $n \geq 1$, let $\Delta_n > 0$ be the constant produced in~\cref{sec 3 prop 2 v2} for the polynomial $p_n$,
and set $\Delta_0 = 1$.
For $n \geq -1$, set
$$
e_{n} = \begin{cases}
\displaystyle \min_{-1 \leq i \leq n} \{ 2^{-n-2}, \Delta_i \}, & n \geq 0,\\
1, & n=-1.
\end{cases}
$$
Observe that $e_n \leq e_{n-1}$ for all $n \geq 0$; in particular, $X(e_{n}) \subseteq X(e_{n-1})$ and $X(e_{-1}) = X$. 
Consider the following sequences of strong deformation retractions:
\begin{itemize}
\item for $n \geq 0$, write $R_n \colon [0,1] \times X(e_{n-1}) \to X(e_{n-1})$ for the strong deformation retraction of $X(e_{n-1})$ onto the subspace $X(e_n)$, constructed in~\cref{def retr onto smaller piece};  
\item for $n \geq 1$, write $G_n \colon [0,1] \times X(e_n) \to X(e_n)$ for a strong deformation retraction of $X(e_n)$ onto the subspace $X(e_n)\backslash \left(\D_k(p_n,e_n) \backslash \mathbf{N}_k(p_n,e_n)\right)$, as in~\cref{sec 3 prop 2 v2}(i); 
\item write $G_{0} \colon [0,1] \times X(e_{0}) \to X(e_0)$ for a strong deformation retraction of $X(e_0)$ onto the subspace $X(e_0) \backslash \left( \D_k(\infty,e_0) \backslash \mathbf{N}_k(\infty,e_0) \right)$, as in~\cref{def retraction at infinity}.
\end{itemize}
For $n \geq 0$, set $K_n \coloneqq G_n \lhd R_n$ and $H_n \coloneqq K_n \lhd \ldots \lhd K_0$, where we follow the notation of~\cref{homotopy notation}. Note that $K_n$ and $H_n$ are strong deformations of $X(e_{n-1})$ and of $X$, respectively, onto the same subspace 
$$
X(e_n) \backslash \left( \left( \D_k(\infty,e_0) \backslash \mathbf{N}_k(\infty,e_0) \right) \cup \left( \bigcup_{i = 1}^{n} \D_k(p_i,e_i) \backslash \mathbf{N}_k(p_i,e_i) \right) \right)
$$
of $X(e_{n})$. 
In order to produce the desired strong deformation retraction, it suffices to show that the $H_n$'s satisfy the conditions of~\cref{gluing sdrs}, where $Y = X^{\triv}$, $y_0 = \eta_{0,1}$, and 
$$
D_n = \mathbf{D}_k(p_n,e_n) \backslash \mathbf{N}_k(p_n,e_n).
$$
The conditions (i), (iii), (iv), and (vi) are obvious from the construction of $H_n$, while (ii) and (v) follow immediately from~\cref{iteration}.
Finally, take $\mathcal{U}$ to be any basis of open neighbourhoods of $\eta_{0,1}$ in $X$; the intersection of any member of $\mathcal{U}$ with $X^{\triv}$ avoids at most finitely-many branches of $X^{\triv}$, as required.
\end{proof}

\begin{remark}\label{contractibility for zariski opens}
The proof of~\cref{sec3 contractibility 2} can be modified to show the following: if $Z \subseteq \P^1_k$ is a non-empty, finite subset of closed points, then there is a strong deformation retraction of $\P^{1,\An}_k$ onto $\P^{1,\triv}_k \cup Z^{\An}$. 
Note that $\P^{1,\triv}_k \cup Z^{\An}$ is contractible, since $Z^{\An}$ is a disjoint union of intervals each intersecting $\P^{1,\triv}_k$ in one point, and so $\P^{1,\triv}_k \cup Z^{\An}$ admits a strong deformation retraction onto the contractible space $\P^{1,\triv}_k$.
%

To see this, it suffices to show the assertion with $\P^1_k$ replaced with $\A^1_k$, so we can
write $Z = \{ p = 0 \}$ for some monic, irreducible polynomial $p \in k[T]$.
Pick an enumeration $(p_n)_{n \geq 1}$ of the monic, irreducible polynomials in $k[T]$ such that $p_1 = p$.
Now, proceed as in the proof of~\cref{sec3 contractibility 2} with the strong deformation retraction $G_1$ as in~\cref{sec 3 prop 2 v2}(ii), and all other $G_n$'s as before.
%
\end{remark}

\subsection{Proof of~\cref{theorem 12}}
\label{contractibility smooth projective curve}

The goal of this section is to prove~\cref{theorem 12}, which states that the hybrid analytification of a smooth projective curve over certain subfields of $\mathbf{C}$ admits a strong deformation retraction onto the non-Archimedean fibre. 
The proof proceeds by reduction to~\cref{sec3 contractibility 2}, and the key topological tool do so is the~\cref{lifting_homotopy}.

\begin{proof}[Proof of~\cref{theorem 12}]
Let $k$ be a countable subfield of $\C$ that is not contained in $\R$.
Pick a finite ramified cover $\varphi \colon X \to \mathbf{P}^1_k$, write $Z \subseteq \P^{1}_k$ for the branch locus, and set $U \coloneqq \P^1_k \backslash Z$. 
It follows that $\varphi \colon \varphi^{-1}(U) \to U$ is a finite \'etale morphism, and hence $\varphi^{\hyb} \colon \varphi^{-1}(U)^{\hyb} \to U^{\hyb}$ is a local homeomorphism over the Archimedean fibres, by~\cref{local_inversion}.
In particular, if $C \coloneqq \P^{1,\mathrm{triv}}_k \cup Z^{\hyb}$ and $V \coloneqq \P^{1,\hyb}_k \backslash C$, then $\varphi^{\hyb}$ restricts to a local homeomorphism $(\varphi^{\hyb})^{-1}(V) \to V$. 

As explained in~\cref{contractibility for zariski opens}, there is a strong deformation retraction $H \colon [0,1] \times \P^{1,\hyb}_k \to \P^{1,\hyb}_k$ of $\P^{1,\hyb}_k$ onto the closed, contractible subset $C$.
Thus, \cref{lifting_homotopy} shows that there is a lift of $H$ to a strong deformation retraction of $X^{\hyb}$ onto $(\varphi^{\hyb})^{-1}(C) = X^{\triv} \cup R^{\hyb}$, where $R \subseteq X$ is the ramification locus of $\varphi$. 
The space $R^{\hyb}$ is a disjoint union of finitely-many intervals, each intersecting $X^{\mathrm{triv}}$ in one point; in particular, there is a strong deformation retraction of $(\varphi^{\hyb})^{-1}(C)$ onto $X^{\triv}$, as required.

Assume now that $k$ is contained in $\R$. 
Write $X^{\hyb}$ for the analytification of $X$, and $X_{k[i]}^{\hyb}$ for the analytification after ground field extension. As in the previous case, there is a finite branched cover $\varphi^{\hyb}\colon X^{\hyb}\to \P^{1,\hyb}_k$; further, 
there is commutative diagram 
\begin{center}
\begin{tikzcd}
X_{k[i]}^{\hyb} \arrow{r}{} \arrow[swap]{d}{\varphi^{\hyb}_{k[i]}} & X^{\hyb} \arrow{d}{\varphi^{\hyb}} \\
\P^{1,\hyb}_{k[i]} \arrow{r}{}& \P^{1,\hyb}_k
\end{tikzcd}
\end{center}
As in~\cref{sec3 contractibility 2}, we can pick
a strong retraction deformation $H$ of $\P^{1,\hyb}_{k[i]}$ onto $\P^{1,\triv}_{k[i]}$ such that for any $(t,x)\in [0,1]\times \P^{1,\hyb}_{k[i]}$, we have $I(H(t,x))=H(t,I(x))$. 
Following the argument of~\cref{corollary retraction disc}, the induced strong retraction deformation $\widetilde{H}$ of $X_{k[i]}^{\hyb}$ onto $X_{k[i]}^{\triv}$ satisfies the same property. 
In particular, $\widetilde{H}$ induces a strong retraction deformation $\widetilde{H}'$ of $X_k^{\hyb}$ onto $X_k^{\triv}$, as required.
\end{proof}

\subsection{Non-Contractibility of the Hybrid Affine Line over Uncountable Archimedean Fields}

\spa{
The goal of this section is to prove~\cref{not contractible} (stated as~\cref{theorem 1}(2) in the introduction), which asserts that $\A^{1,\hyb}_k$ is not contractible 
whenever $k$ is an uncountable subfield of $\C$ that is not contained in $\R$.
}

\begin{proposition}\label{non contractibility retraction}
For any $z \in k$, there is a retraction $\frakr_z \colon \A^{1,\hyb}_k \to \D_k(z)$ satisfying the following conditions:
\begin{enumerate}[label=(\roman*)]
\item $\lambda \circ \frakr_z = \lambda$;
\item if $x \in \A^{1,\triv}_k$ lies in a branch other than $[\eta_{z,0},\eta_{z,1})$, then $\frakr_z(x) = \eta_{z,1}$.
\end{enumerate}
In particular, $\frakr_z^{-1}(\eta_{z,r}) = \{ \eta_{z,r} \}$ for $r \in [0,1)$.
\end{proposition}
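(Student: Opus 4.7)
The plan is to construct $\frakr_z$ fibre-wise and then verify its continuity using the initial-topology description of the Berkovich topology. On each Archimedean fibre $\lambda^{-1}(\rho) \simeq \C$ (or $\C/\lnot$ when $k \subseteq \R$), observe that $\D_k(z) \cap \lambda^{-1}(\rho)$ is the closed Euclidean disc of radius $\rho$ about $z$, because for $x = \ev(w,\rho)$ one has $|(T-z)(x)| = |w-z|_\infty^{\rho} \leq \rho^{\rho} = \chi(\rho)$ iff $|w-z|_\infty \leq \rho$. Accordingly, I would use the radial retraction
\[
\frakr_z(\ev(w,\rho)) \coloneqq \begin{cases} \ev(w,\rho), & |w-z|_\infty \leq \rho, \\[2pt] \ev\!\left(z + \rho\,\dfrac{w-z}{|w-z|_\infty},\,\rho\right), & |w-z|_\infty > \rho, \end{cases}
\]
which descends to $\C/\lnot$ since reflection in the real axis preserves discs centered at $z \in k \subseteq \R$. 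On the non-Archimedean fibre $\A^{1,\triv}_k$, I would let $\frakr_z$ be the identity on $[\eta_{z,0},\eta_{z,1}]$ and send every other point to $\eta_{z,1} = \eta_{0,1}$. Condition (i) is then immediate, condition (ii) is the definition, the retraction property $\frakr_z|_{\D_k(z)} = \mathrm{id}$ holds on each fibre, and the preimage assertion follows because $\eta_{z,r}$ with $r<1$ lies in $\lambda^{-1}(0)$, so by (i) its preimage is contained in $\A^{1,\triv}_k$, where only $\eta_{z,r}$ itself is sent there.

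The substance of the proof is continuity. Since the Berkovich topology on $\A^{1,\hyb}_k$ is the initial topology for the functions $x \mapsto |f(x)|$ as $f$ ranges over $k[T]$, it suffices to show that $x \mapsto |f(\frakr_z(x))|$ is continuous for each such $f$. Continuity on each fibre is routine (continuity of the radial projection in $\C$ on the Archimedean side, and an elementary check with the neighbourhood basis of~\cref{sec 3 nbhood basis} on the non-Archimedean side). The content lies in continuity at points of $\A^{1,\triv}_k$, where the two descriptions must match. The key computation, for a sequence $x_n = \ev(w_n,\rho_n)$ with $\rho_n \to 0$ and $|w_n - z|_\infty > \rho_n$, is that writing $f = (T-z)^m g$ with $g(z) \neq 0$ gives
\[
|f(\frakr_z(x_n))| = \rho_n^{m\rho_n}\,|g(z+\rho_n u_n)|_\infty^{\rho_n} \longrightarrow 1 = |f(\eta_{z,1})|
\]
for $f \neq 0$, where $u_n = (w_n-z)/|w_n-z|_\infty$ has Archimedean norm $1$. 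So whenever the radial projection is non-trivial, the image converges to $\eta_{z,1}$.

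To finish, I would case-split on the limit $x_\infty \in \A^{1,\triv}_k$ of a sequence $x_n = \ev(w_n,\rho_n)$. If $x_\infty = \eta_{z,r}$ with $r < 1$, then $|w_n-z|_\infty^{\rho_n} \to r < 1 = \lim \rho_n^{\rho_n}$ forces $|w_n-z|_\infty \leq \rho_n$ eventually, so $\frakr_z(x_n) = x_n \to x_\infty$. If $x_\infty = \eta_{p,r}$ with $p \neq T-z$ irreducible and $r < 1$, or if $x_\infty = \eta_{0,s}$ with $s > 1$, then the test function $p$ (resp.\ $T$) forces $|w_n-z|_\infty$ to stay bounded away from $0$ (resp.\ grow), so $|w_n-z|_\infty > \rho_n$ eventually and the displayed identity gives $\frakr_z(x_n) \to \eta_{z,1} = \frakr_z(x_\infty)$. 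The main obstacle I anticipate is the case $x_\infty = \eta_{0,1}$, which for uncountable $k$ has no countable neighbourhood basis; here neither subcase on $|w_n-z|_\infty$ can be ruled out, but the initial-topology test handles both subcases uniformly, yielding $|f(\frakr_z(x_n))| \to 1 = |f(\eta_{0,1})|$ and bypassing the metrizability issue. When $k \subseteq \R$, the whole construction commutes with complex conjugation and descends from $\A^{1,\hyb}_{k[i]}$ to $\A^{1,\hyb}_k$ as in~\cref{complex conjugation 5}.
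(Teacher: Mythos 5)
Your construction of $\frakr_z$ is exactly the one in the paper: the fibrewise radial retraction $\ev(w,t)\mapsto \ev\bigl(z+\min\{1,t|w-z|_\infty^{-1}\}(w-z),t\bigr)$ on the Archimedean part, the identity on $[\eta_{z,0},\eta_{z,1}]$, and the constant $\eta_{0,1}$ on the rest of the trivial fibre; properties (i), (ii) and the preimage claim are read off the formula in both treatments. Where you diverge is the continuity check. The paper does not test against all polynomials: it invokes \cref{sec 3 nbhood basis} to reduce a basis of neighbourhoods of $\eta_{0,1}$ \emph{inside $\D_k(z)$} to sets cut out by the single function $T-z$ together with $\lambda$, and then computes $\frakr_z^{-1}(V)=\{\,1-\epsilon<\min\{|(T-z)(y)|,\chi(\lambda(y))\}<1+\epsilon,\ \lambda(y)<\epsilon\,\}$, which is visibly open and contains all of $\A^{1,\triv}_k\setminus[\eta_{z,0},\eta_{z,1})$ at once. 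Your route via the initial topology (continuity of $|f\circ\frakr_z|$ for every $f$) is legitimate and buys nothing extra here, at the cost of the factorization $f=(T-z)^m g$ and a case analysis over all possible limits.

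One point you should tighten: you phrase the verification in terms of sequences $x_n=\ev(w_n,\rho_n)$, but for uncountable $k$ the point $\eta_{0,1}$ has no countable neighbourhood basis, so sequential continuity of $|f\circ\frakr_z|$ there does not imply continuity — and this is precisely the regime in which the proposition is used (\cref{not contractible}). You flag the issue but resolve it only by assertion. The fix is genuinely available from your own estimate, since $\bigl||f(\frakr_z(x))|-1\bigr|$ is controlled uniformly by $\lambda(x)$ and by $|(T-z)(x)|$ over a basic neighbourhood $\{1-\epsilon'<|(T-z)(y)|,\ \lambda(y)<\delta\}$ of $\eta_{0,1}$ (note that the lower bound on $|(T-z)(y)|$ is needed to exclude points such as $\ev(z,t)$, where $|f\circ\frakr_z|$ can vanish); but as written the argument should be recast in terms of neighbourhoods or nets rather than sequences. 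With that repair the proof is complete.
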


The retraction $\frakr_z$ is the hybrid analogue of the retraction of the complex plane $\A^{1,h}_{\C} = \C$ onto the closed disc of radius $1$ centered at $z$; in fact, $\frakr_z |_{\lambda^{-1}(1)}$ is precisely this map. 
On the non-Archimedean fibre, $\frakr_z$ contracts all branches of $\lambda^{-1}(0)$ to the trivial norm $\eta_{0,1}$ save for the branch of $z$, which is fixed.

Note that we do not claim that $\frakr_z$ extends to a strong deformation retraction of $\A^{1,\hyb}_k$ onto $\D_k(z)$ (even though this occurs on each Archimedean fibre). In fact, such an extension cannot always exist by~\cref{not contractible}.

\begin{proof}
Define the retraction $\frakr_z$ by the following formula:
$$
\frakr_z(x) \coloneqq \begin{cases}
\ev\left(\min\{ 1, t|w-z|_{\infty}^{-1}\} (w-z) + z, t \right) & x = \ev(w,t) \in \A^{1,\arch}_k, \\
 x & x \in [\eta_{z,0},\eta_{z,1}], \\
 \eta_{0,1} & \textrm{otherwise.}
\end{cases}
$$
It is easy to verify that $\frakr_{z}$ is the identity on $\D_k(z)$ and that it is continuous at any point of $\A^{1,\arch}_k \cup [\eta_{z,0},\eta_{z,1})$, since $\D_k(z)$ is a neighbourhood in $\A^{1,\hyb}_k$ of any point of $[\eta_{z,0},\eta_{z,1})$. 
Thus, it suffices to check the continuity of $\frakr_z$ at $x \in \A^{1,\triv}_k \backslash [\eta_{z,0},\eta_{z,1})$.
By~\cref{sec 3 nbhood basis}, we can assume that an open neighbourhood $V$ of $\eta_{0,1} = \frakr_z(x)$ in $\D_k(z)$ is of the form
$$
V = \{ y \in \A^{1,\hyb}_k \colon 1-\epsilon < |(T-z)(y)| < 1+\epsilon , \lambda(y) < \epsilon \} \cap \D_k(z)
$$
for some $\epsilon > 0$. 
In this case,
$$
\frakr_z^{-1}(V) = \left\{ y \in \A^{1,\hyb}_k \colon 1-\epsilon < \min\left\{ |(T-z)(y)|, \chi(\lambda(y)) \right\} < 1 + \epsilon , \lambda(y) < \epsilon\right\},
$$
which is clearly open and it contains all of $\A^{1,\mathrm{triv}}_k \backslash [\eta_{z,0},\eta_{z,1})$.
Thus, $\frakr_z$ is continuous everywhere.
\end{proof}

\spa{
The key tool in the proof~\cref{not contractible} is to prove the following:
if there exists a strong deformation retraction $H \colon [0,1] \times \A^{1,\hyb}_k \to \A^{1,\hyb}_k$ 
of $\A^{1,\hyb}_k$ onto the Gauss point $\eta_{0,1} \in \A^{1,\mathrm{triv}}_k$, then there must be a pair $(t,x) \in [0,1) \times \A^{1,\arch}_k$ such that $H(t,x)$ lies in the non-Archimedean fibre. 
The mechanism for proving such an assertion is the careful description of the topology of the hybrid closed disc $\D_k(z)$ in~\S\ref{local structure}
(and more precisely, \cref{corollary homotopy disc}): this is used below in~\cref{key lemma} and~\cref{non contractibility corollary 2} to impose restrictions on the behaviour of a homotopy between the inclusion $\D_k(z) \hookrightarrow \A^{1,\hyb}_k$ and a map $\D_k(z) \to \A^{1,\hyb}_k$ with image in the non-Archimedean fibre.
%
%
%
}

\begin{lemma}\label{key lemma}
Suppose $k$ is not contained in $\R$.
Let $z \in k$, and let $g \colon \D_k(z) \to \D_k(z)$ be a continuous map with image in $(\eta_{z,0},\eta_{z,1}]$.
If there is a homotopy $H \colon [0,1] \times \D_k(z) \to \D_k(z)$ from the identity map to $g$, then there exists $x \in \mathbf{E}_k(z)$ and $s \in (0,1]$ such that $H(s,x) = \eta_{z,0}$.
\end{lemma}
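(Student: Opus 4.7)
The plan is to argue by contradiction using \cref{corollary homotopy disc}. Assume no such pair $(s,x)$ exists; that is, suppose $H(s,x) \neq \eta_{z,0}$ for every $(s,x) \in (0,1]\times \mathbf{E}_k(z)$. Since $\eta_{z,0}$ is the unique point of $[\eta_{z,0},\eta_{z,1}]$ not lying on the boundary surface and, by the description in the paragraph after \cref{hybrid disc topology}, one checks directly that $\eta_{z,0} \notin \mathbf{E}_k(z)$, the value at $t=0$ is also nonzero: $H(0,x) = x \neq \eta_{z,0}$ for $x \in \mathbf{E}_k(z)$. Hence the restriction $\widetilde{H} \coloneqq H\big|_{[0,1]\times \mathbf{E}_k(z)}$ takes values in $\D_k(z)\setminus\{\eta_{z,0}\}$, and so it is a well-defined homotopy (inside $\D_k(z)\setminus\{\eta_{z,0}\}$) from the inclusion $\iota\colon \mathbf{E}_k(z)\hookrightarrow \D_k(z)\setminus\{\eta_{z,0}\}$ to $g\big|_{\mathbf{E}_k(z)}$.

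Next, I would exploit the fact that $g$ has image in $(\eta_{z,0},\eta_{z,1}]$, which is homeomorphic to a half-open interval and therefore contractible. Concretely, the map
\[
G \colon [0,1]\times (\eta_{z,0},\eta_{z,1}] \to (\eta_{z,0},\eta_{z,1}], \qquad (u, \eta_{z,r}) \mapsto \eta_{z,\,(1-u)r + u}
\]
is a strong deformation retraction of $(\eta_{z,0},\eta_{z,1}]$ onto $\{\eta_{z,1}\}$. Composing with $g\big|_{\mathbf{E}_k(z)}$ produces a homotopy inside $(\eta_{z,0},\eta_{z,1}] \subseteq \D_k(z)\setminus\{\eta_{z,0}\}$ from $g\big|_{\mathbf{E}_k(z)}$ to the constant map with value $\eta_{z,1}$.

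Concatenating $\widetilde H$ with this second homotopy yields a homotopy, inside $\D_k(z)\setminus\{\eta_{z,0}\}$, from the inclusion $\iota\colon \mathbf{E}_k(z)\hookrightarrow \D_k(z)\setminus\{\eta_{z,0}\}$ to the constant map with value $\eta_{z,1}$. This directly contradicts \cref{corollary homotopy disc}, which asserts that $\iota$ is \emph{not} homotopic to a constant map (using that $k$ is not contained in $\mathbf{R}$). The contradiction forces the existence of a pair $(s,x) \in (0,1] \times \mathbf{E}_k(z)$ with $H(s,x) = \eta_{z,0}$, as required.

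No step is particularly difficult; the only point that requires a little care is verifying that $\eta_{z,0}\notin\mathbf{E}_k(z)$ so that the restricted homotopy genuinely lands in $\D_k(z)\setminus\{\eta_{z,0}\}$ at both endpoints. This is immediate from the definitions in \cref{hybrid closed disc}, since $|T-z|$ vanishes at $\eta_{z,0}$ while every point of $\mathbf{E}_k(z)$ satisfies either $\lambda = 1$ or $|T-z| = \chi(\lambda) > 0$.
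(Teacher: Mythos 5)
Your proof is correct and follows essentially the same route as the paper's: argue by contradiction, restrict $H$ to $[0,1]\times\mathbf{E}_k(z)$ to get a homotopy in $\D_k(z)\setminus\{\eta_{z,0}\}$ from the inclusion to $g|_{\mathbf{E}_k(z)}$, note that the image of $g$ lies in the contractible set $(\eta_{z,0},\eta_{z,1}]$, and contradict \cref{corollary homotopy disc}. Your explicit verification that $\eta_{z,0}\notin\mathbf{E}_k(z)$ and the explicit contracting homotopy of $(\eta_{z,0},\eta_{z,1}]$ are details the paper leaves implicit, but the argument is the same.
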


\begin{proof}
Suppose, for sake of contradiction, that for any $(s,x) \in (0,1] \times \mathbf{E}_k(z)$, we have $H(s,x) \not= \eta_{z,0}$.
Then, $H$ restricts to a homotopy 
$$
[0,1] \times \mathbf{E}_k(z) \to \D_k(z) \backslash \{ \eta_{z,0} \}
$$
between the inclusion $\mathbf{E}_k(z) \hookrightarrow \D_k(z) \backslash \{ \eta_{z,0} \}$ and $g |_{\mathbf{E}_k(z)}$.
The image of $g |_{\mathbf{E}_k(z)}$ is a connected subset of $(\eta_{z,0},\eta_{z,1})$ by assumption, hence $g |_{\mathbf{E}_k(z)}$ is homotopic to a constant map.
In particular, the inclusion $\mathbf{E}_k(z) \hookrightarrow \D_k(z) \backslash \{ \eta_{z,0} \}$ is homotopic to a constant map, which contradicts~\cref{corollary homotopy disc}.
\end{proof}

\begin{corollary}\label{non contractibility corollary 2}
Suppose $k$ is not contained in $\R$.
Let $z \in k$, and $g \colon \D_{k}(z) \to \A^{1,\hyb}_{k}$ be a continuous map with image in $\A^{1,\triv}_{k} \backslash \{ \eta_{z,0} \}$.
If $H \colon [0,1] \times \D_{k}(z) \to \A^{1,\An}_{k}$ is a homotopy from the inclusion $\D_{k}(z) \hookrightarrow \A^{1,\hyb}_{k}$ to $g$, then
 there exists $x \in \mathbf{E}_{k}(z)$ and $s \in (0,1]$ such that $H(s,x) = \eta_{z,0}$. 
\end{corollary}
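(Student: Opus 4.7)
The plan is to reduce the corollary directly to \cref{key lemma} by postcomposing the homotopy $H$ with the retraction $\frakr_z \colon \A^{1,\hyb}_k \to \D_k(z)$ constructed in \cref{non contractibility retraction}. Concretely, consider the continuous map
\[
\widetilde{H} \coloneqq \frakr_z \circ H \colon [0,1] \times \D_k(z) \to \D_k(z).
\]
Since $\frakr_z$ is a retraction onto $\D_k(z)$, we have $\widetilde{H}(0,x) = \frakr_z(x) = x$ for every $x \in \D_k(z)$, so $\widetilde{H}$ is a homotopy from the identity of $\D_k(z)$ to the map $\widetilde{g} \coloneqq \frakr_z \circ g$.

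Next I would verify that the hypotheses of \cref{key lemma} are satisfied for $\widetilde{H}$ and $\widetilde{g}$, i.e.\ that the image of $\widetilde{g}$ lies in $(\eta_{z,0},\eta_{z,1}]$. By hypothesis, for each $x \in \D_k(z)$ the point $g(x)$ lies in $\A^{1,\triv}_k \setminus \{\eta_{z,0}\}$. If $g(x) \in [\eta_{z,0},\eta_{z,1}]$, then $g(x)$ already belongs to $\D_k(z)$ and \cref{non contractibility retraction} gives $\frakr_z(g(x)) = g(x) \in (\eta_{z,0},\eta_{z,1}]$. Otherwise, $g(x)$ lies on a branch of $\A^{1,\triv}_k$ different from $[\eta_{z,0},\eta_{z,1})$, and then condition (ii) of \cref{non contractibility retraction} gives $\frakr_z(g(x)) = \eta_{0,1} = \eta_{z,1} \in (\eta_{z,0},\eta_{z,1}]$. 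In either case $\widetilde{g}(x) \in (\eta_{z,0},\eta_{z,1}]$, as required.

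Applying \cref{key lemma} to $\widetilde{H}$ therefore produces some $x \in \mathbf{E}_k(z)$ and $s \in (0,1]$ such that $\widetilde{H}(s,x) = \eta_{z,0}$. The final step is to upgrade this equality from $\widetilde{H}$ back to $H$: by the ``in particular'' part of \cref{non contractibility retraction}, the fibre $\frakr_z^{-1}(\eta_{z,0})$ consists of the single point $\eta_{z,0}$, so $\frakr_z(H(s,x)) = \eta_{z,0}$ forces $H(s,x) = \eta_{z,0}$, which is exactly the desired conclusion. No step looks like a real obstacle here: the corollary is essentially a formal consequence of the combination of the retraction's fibre-collapsing property and the cylinder-topology lemma proved earlier.
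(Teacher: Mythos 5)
Your proof is correct and is exactly the paper's argument: the paper's proof is the one-line instruction to apply \cref{key lemma} to $\frakr_z \circ H$, and you have simply filled in the routine verifications (that $\frakr_z \circ g$ lands in $(\eta_{z,0},\eta_{z,1}]$ and that $\frakr_z^{-1}(\eta_{z,0}) = \{\eta_{z,0}\}$ lets you pull the conclusion back to $H$). No changes needed.
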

%


\begin{proof}
Apply~\cref{key lemma} to the homotopy $\frakr_z \circ H$, where $\frakr_z \colon \A^{1,\hyb}_k \to \D_k(z)$ is the retraction of~\cref{non contractibility retraction}.
\end{proof}

\begin{lemma}\label{fixed point lemma}
If $\varphi \colon \A^{1,\hyb}_{k} \to \A^{1,\hyb}_{k}$ is a continuous map that does not fix $\eta_{0,1} \in \A^{1,\hyb}_{k}$, then the image $\varphi(\A^{1,\hyb}_{k})$ intersects at most countably-many branches of $\A^{1,\hyb}_{k}$. 
\end{lemma}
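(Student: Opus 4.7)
The plan is to exploit the fact that $\eta_{0,1}$ is the unique point of $\A^{1,\hyb}_k$ lacking a countable neighbourhood basis: since $y := \varphi(\eta_{0,1})$ does have one, $\varphi$ must collapse all but countably many branches to the single point $y$, and the residual piece of $\A^{1,\hyb}_k$ is then small enough to be Lindel\"{o}f, so that a standard single-branch cover extracts a countable subcover.

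Concretely, pick a decreasing countable neighbourhood basis $\{W_n\}_{n \geq 1}$ of $y$ with $\bigcap_n W_n = \{y\}$, and set $V_n := \varphi^{-1}(W_n)$. Each $V_n$ is an open neighbourhood of $\eta_{0,1}$ and hence contains all but finitely-many branches (\cref{hybrid affine line example}). Let $F$ be the countable family of monic irreducible $p$ with $B_p \not\subseteq V_n$ for some $n$; for $p \notin F$, $B_p \subseteq \bigcap_n V_n = \varphi^{-1}(\{y\})$, so $\varphi(B_p) = \{y\}$. Because $\A^{1,\triv}_k$ is closed in $\A^{1,\hyb}_k$ and the branches are pairwise clopen in $\A^{1,\triv}_k \setminus \{\eta_{0,1}\}$, one checks that $Z := \overline{\bigcup_{p \notin F} B_p}$ equals $\bigcup_{p \notin F} B_p \cup \{\eta_{0,1}\}$, so $\varphi(Z) = \{y\}$. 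Setting $\mathcal{O} := \A^{1,\hyb}_k \setminus Z$, this reduces us to bounding the branches met by $\varphi(\mathcal{O})$.

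The key observation is that $\mathcal{O}$ is Lindel\"{o}f. It decomposes as the open piece $\A^{1,\arch}_k$, which is second countable (being homeomorphic to $(0,1]\times\mathbf{C}$ or its quotient), together with the closed piece $\mathcal{O} \cap \A^{1,\triv}_k = \bigsqcup_{p \in F} B_p \sqcup (\eta_{0,1}, \eta_{0,\infty})$, a countable topological sum of second-countable subspaces and hence itself second countable. Any open cover of $\mathcal{O}$ restricts to open covers of these two Lindel\"{o}f pieces, producing countable subcovers which combine. Moreover $\varphi^{-1}(\eta_{0,1}) \subseteq \mathcal{O}$ (for if $\varphi(x) = \eta_{0,1}$ with $x \in B_p$, $p \notin F$, then $\varphi(x) = y \neq \eta_{0,1}$, impossible), so $\mathcal{O}' := \mathcal{O} \setminus \varphi^{-1}(\eta_{0,1})$ is an open subset of $\mathcal{O}$ and is Lindel\"{o}f by the same argument; hence $\varphi(\mathcal{O}')$ is Lindel\"{o}f as a continuous image.

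Finally, every point $z \neq \eta_{0,1}$ of $\A^{1,\hyb}_k$ admits an open neighbourhood meeting at most one branch: a quick case analysis dispatches $z \in \A^{1,\arch}_k$ (use a neighbourhood inside $\A^{1,\arch}_k$, which meets no branch), $z = \eta_{q,r}$ with $r < 1$ (use the basic neighbourhood from \cref{sec 3 nbhood basis}, which only meets $B_q$), and $z = \eta_{0,r}$ with $r > 1$ (a small neighbourhood on the open ray meets no branch). Cover the Lindel\"{o}f set $\varphi(\mathcal{O}') \subseteq \A^{1,\hyb}_k \setminus \{\eta_{0,1}\}$ by these neighbourhoods and extract a countable subcover: $\varphi(\mathcal{O}')$ then meets only countably many branches. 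Since $\varphi(\mathcal{O}) \subseteq \varphi(\mathcal{O}') \cup \{\eta_{0,1}\}$ and neither $\eta_{0,1}$ nor $y$ lies in more than one branch, $\varphi(\A^{1,\hyb}_k) = \varphi(\mathcal{O}) \cup \{y\}$ meets at most countably-many branches. The delicate step is the Lindel\"{o}fness of $\mathcal{O}$: it works only after subtracting the closure of the uncountable branch family accumulating at $\eta_{0,1}$, which is precisely the subset that the hypothesis $\varphi(\eta_{0,1}) \neq \eta_{0,1}$ allows one to collapse via the countable basis at $y$.
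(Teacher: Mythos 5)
Your proof is correct. It ultimately rests on the same two facts as the paper's argument --- that every point of $\A^{1,\hyb}_k$ other than $\eta_{0,1}$ admits a neighbourhood meeting at most one branch, and that away from a neighbourhood of $\eta_{0,1}$ the space is countability-controlled --- but you organize the decomposition differently. The paper pulls back a \emph{single} compact single-branch neighbourhood $V$ of $\varphi(\eta_{0,1})$, notes that the complement $W$ of the closed neighbourhood $\varphi^{-1}(V)$ of $\eta_{0,1}$ is second countable (it is covered by $\A^{1,\arch}_k$ and finitely many hybrid discs), and concludes because a separable space cannot contain uncountably many pairwise disjoint nonempty open sets. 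You instead pull back an entire countable neighbourhood basis at $\varphi(\eta_{0,1})$, which yields the stronger intermediate statement that all but countably many branches are collapsed to the single point $\varphi(\eta_{0,1})$; the residual open set $\mathcal{O}$ is then Lindel\"of and a countable subcover by single-branch neighbourhoods finishes. Your route buys the explicit collapse statement (which the paper never extracts) at the cost of the closure computation for $Z$ and the extra Lindel\"of bookkeeping; the paper's version is shorter because one neighbourhood suffices. Two minor points: when $k$ is countable the lemma is vacuous, and your description of $\mathcal{O}\cap\A^{1,\triv}_k$ as a topological sum indexed by $F$ silently assumes $\{p\notin F\}\neq\emptyset$ (equivalently $\eta_{0,1}\in Z$), which holds exactly when there are uncountably many branches --- worth dispatching the degenerate case in one line; and for the piece $(\eta_{0,1},\eta_{0,\infty})$ it is cleaner to invoke $\sigma$-compactness (hence Lindel\"ofness) than second countability, though either suffices for the union argument.
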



\begin{proof}
We begin with two preliminary observations.
\begin{enumerate}
%
\item The complement of a closed neighbourhood of $\eta_{0,1}$ is second countable. Indeed, any such open set is covered by the union of $\A^{1,\arch}_k$ and the interiors of finitely-many hybrid discs $\D_k(p)$ (namely, those that correspond to branches not contained in the neighbourhood of $\eta_{0,1}$).
Each member of this finite cover is second countable, and hence the open set is second countable.

\item Any point of $\A^{1,\hyb}_k \backslash \{ \eta_{0,1}\}$ has an open neighbourhood that intersects at most one branch. 
Indeed, if the point lies in $\A^{1,\arch}_k$, then take a neighbourhood contained entirely in $\A^{1,\arch}_k$.
On the other hand, if the point lies in $\A^{1,\triv}_k$, it is of the form $\eta_{p,r}$ for some monic, irreducible polynomial $p \in k[T]$ and $r \in [0,1)$, and $\D_k(p)$ is a neighbourhood of $\eta_{p,r}$ that only intersects the branch $[\eta_{p,0},\eta_{p,1})$.
%
\end{enumerate}
As $\varphi(\eta_{0,1}) \not= \eta_{0,1}$, the observation (2) asserts that there is an open neighbourhood $U$ of $\varphi(\eta_{0,1})$ that intersects at most one branch.
The space $\A^{1,\hyb}_{k}$ is locally compact, so there is a compact neighbourhood $V \subseteq U$ of $\varphi(\eta_{0,1})$; in particular, $V$ intersects at most one branch.
By the observation (1), 
the complement $W \coloneqq \A^{1,\hyb}_{k} \backslash \varphi^{-1}(V)$ is second countable.
Thus,
$\varphi^{-1}(\varphi(W) \cap \A^{1,\triv}_k)$ is a subspace of $\varphi^{-1}(\varphi(W)) = W$, and hence it is also second countable.
%
%
%
%
%
%
%
%
%
In particular, $\varphi(W) \cap \A^{1,\triv}_k$ 
is a topological space with at most countably-many pairwise-disjoint open sets;
thus, $\varphi(W)$ intersects at most countably-many branches.
It follows that
$$
\varphi(\A^{1,\hyb}_{k}) = \varphi(W) \cup \varphi(\varphi^{-1}(V)) \subseteq \varphi(W) \cup V
$$
intersects at most countably-many branches, since $\varphi(W)$ intersects at most countably-many branches and $V$ intersects at most one branch.
\end{proof}

\begin{lemma}\label{old claim}
Let $k $ be an uncountable subfield of $\C$ that is not contained in $\R$.
If there exists a homotopy $H \colon [0,1] \times \A^{1,\hyb}_{k} \to \A^{1,\hyb}_{k}$ between the identity map on $\A^{1,\hyb}_{k}$ and the constant map with value $\eta_{0,1} \in \A^{1,\triv}_k$, then the image $H([0,1] \times \A^{1,\arch}_k)$ of the Archimedean fibres intersects uncountably-many branches.
\end{lemma}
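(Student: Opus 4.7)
The plan is to argue by contradiction: suppose $H([0,1] \times \A^{1,\arch}_k)$ intersects only countably many branches. Since $k$ is uncountable, the set $Z' \subseteq k$ of those $z$ whose branch $[\eta_{z,0}, \eta_{z,1})$ is disjoint from $H([0,1] \times \A^{1,\arch}_k)$ is uncountable.

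For each $z \in Z'$, I apply \cref{non contractibility corollary 2} to the restriction $H|_{[0,1] \times \D_k(z)}$, which is a homotopy from the inclusion $\D_k(z) \hookrightarrow \A^{1,\hyb}_k$ to the constant map with value $\eta_{0,1}$; its endpoint has image $\{\eta_{0,1}\} \subseteq \A^{1,\triv}_k \setminus \{\eta_{z,0}\}$. The corollary produces $x_z \in \mathbf{E}_k(z)$ and $s_z \in (0,1]$ with $H(s_z,x_z) = \eta_{z,0}$. Since $\mathbf{E}_k(z) \cap \A^{1,\triv}_k = \{\eta_{0,1}\}$, one of two cases occurs. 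If $x_z \in \A^{1,\arch}_k$, then $\eta_{z,0} \in H([0,1] \times \A^{1,\arch}_k)$, contradicting $z \in Z'$ and finishing the proof. Otherwise $x_z = \eta_{0,1}$, in which case $H(s_z,\eta_{0,1}) = \eta_{z,0}$; I may therefore assume this latter case holds for every $z \in Z'$.

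The assignment $z \mapsto s_z$ is then automatically injective, since $H(\,\cdot\,,\eta_{0,1})$ is single-valued. Hence the continuous path $g \colon [0,1] \to \A^{1,\hyb}_k$ defined by $g(s) \coloneqq H(s,\eta_{0,1})$ visits $\eta_{z,0}$ for uncountably many distinct $z \in Z'$. The closing step is a separability argument. The preimage $F \coloneqq g^{-1}(\A^{1,\triv}_k)$ is closed in $[0,1]$, so it is a compact metrizable, hence separable, space; its continuous image $S \coloneqq g(F)$ is therefore a separable subspace of $\A^{1,\triv}_k$ that contains every $\eta_{z,0}$ with $z \in Z'$. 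Using the neighbourhood basis from \cref{sec 3 nbhood basis} (extended to the endpoint $r = 0$) together with the explicit formula for $|p|(\eta_{q,r})$ with $p,q$ monic irreducible, one checks that each branch $[\eta_{z,0},\eta_{z,1})$ is open in $\A^{1,\triv}_k$ and that distinct branches are disjoint. Consequently $\{S \cap [\eta_{z,0},\eta_{z,1})\}_{z \in Z'}$ is an uncountable family of pairwise disjoint nonempty open subsets of $S$, contradicting the separability of $S$.

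The main technical ingredient is the non-contractibility output \cref{non contractibility corollary 2}, which supplies the crucial pair $(s_z,x_z)$ for each $z \in Z'$; once that is in hand the remainder is routine topological bookkeeping, and the only mildly subtle point is the verification that branches are open in the non-Archimedean fibre, for which the neighbourhood basis description of trivially-valued points suffices.
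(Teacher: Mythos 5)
Your proof is correct, but it takes a genuinely different route from the one in the paper, and it is arguably cleaner. You apply \cref{non contractibility corollary 2} once, to the full homotopy restricted to $[0,1]\times\D_k(z)$, whose endpoint is the constant map $\eta_{0,1}$; this matches the corollary's hypothesis (endpoint with image in $\A^{1,\triv}_k\setminus\{\eta_{z,0}\}$) exactly. You then dispose of the remaining case $x_z=\eta_{0,1}$ for all $z$ by a separability argument on the single path $s\mapsto H(s,\eta_{0,1})$: its preimage of the closed set $\A^{1,\triv}_k$ is compact in $[0,1]$, so its image $S$ is separable and cannot meet uncountably many pairwise disjoint nonempty open subsets. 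The paper instead introduces $s_0=\inf\{s\colon H(s,\cdot)\ \textrm{meets at most countably many branches}\}$, splits into cases according to whether this infimum is attained, rules out $x_z=\eta_{0,1}$ via \cref{fixed point lemma}, and in the second case needs a sequential compactness argument on $[0,1]\times\mathbf{E}_k(z)$. Your version avoids \cref{fixed point lemma} and the compactness argument entirely, and it transplants to the lemma the second-countability idea that the paper deploys only in the final step of \cref{not contractible}; it also sidesteps a slight mismatch in the paper's Case 1, where the corollary is invoked for a truncated homotopy whose endpoint map $H(s_0,\cdot)$ need not land in the non-Archimedean fibre. The one step you should write out explicitly is the one you flagged: each branch satisfies $[\eta_{z,0},\eta_{z,1})=\{x\in\A^{1,\triv}_k\colon |(T-z)(x)|<1\}$, hence is open in $\A^{1,\triv}_k$, and distinct branches are disjoint; this gives the uncountable disjoint open family in $S$ and is a fact the paper itself relies on in the proof of \cref{not contractible}.
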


\begin{proof}
Consider the subset
$$
F \coloneqq \{ s \in [0,1] \colon H(s,\A^{1,\hyb}_{k}) \textrm{ intersects at most countably-many branches of $\A^{1,\triv}_{k}$} \}.
$$
It is non-empty: indeed, $1 \in F$ since $H(1,\A^{1,\hyb}_k) = \{ \eta_{0,1} \}$ intersects no branches. 
Further, $0 \not\in F$ since $H(0,\A^{1,\hyb}_{k}) = \A^{1,\hyb}_{k}$ intersects all branches of $\A^{1,\hyb}_{k}$, and there are uncountably-many by the assumption on $k$.
Let $s_0 \coloneqq \inf\{ s \colon s \in F\}$, and note that $s_0 > 0$.
\begin{enumerate}
\item[Case 1.]
Suppose $s_0 \in F$.
Let $B_0$ be the (necessarily uncountable) set of branches of the form $[\eta_{z,0},\eta_{z,1})$ of $\A^{1,\triv}_k$ 
such that 
$$
H(s_0,\A^{1,\hyb}_{k}) \cap [\eta_{z,0},\eta_{z,1}) = \emptyset.
$$
Fix one branch
$[\eta_{z,0},\eta_{z,1})$ in $B$,
and it suffices to show that $H([0,1] \times \A^{1,\arch}_k)$ intersects $[\eta_{z,0},\eta_{z,1})$.

\cref{non contractibility corollary 2} implies that there exists $x_z \in \mathbf{E}_{k}(z)$ and $s_1 \in [0,s_0)$ such that $H(s_1,x_z) = \eta_{z,0}$.
If $x_z \in \A^{1,\arch}_k$, then we are done.
Assume, for sake of contradiction, that $x_z \in \A^{1,\triv}_k$.
We know that $\mathbf{E}_{k}(z) \cap \A^{1,\triv}_k = \{ \eta_{0,1}\}$ (see~\cref{hybrid disc topology}),
hence $x_z = \eta_{0,1}$. 
By the hypothesis on $s_0$, $H(s_1,\A^{1,\hyb}_{k})$ intersects uncountably-many branches, and hence the map $H(s_1,\cdot) \colon \A^{1,\hyb}_{k} \to \A^{1,\hyb}_{k}$ must fix $\eta_{0,1}$ by~\cref{fixed point lemma}; however, we know that $H(s_1,x_z) = \eta_{z,0}$, a contradiction. 

\item[Case 2.] 
If $s_0 \not\in F$, then $s_0 < 1$.
Let $\overline{B}$ denote the (uncountable) set of branches of $\A^{1,\hyb}_{k}$.
The definition of $s_0$ guarantees that for any $n \geq 1$, there exists $s_n \in (s_0,\min\{ 1,s_0 + \frac{1}{n}\})$ such that $H(s_n,\A^{1,\hyb}_{k})$ intersects at most countably-many branches; said differently, the set $B_n \subseteq \overline{B}$ of branches that \emph{do not} intersect $H(s_n,\A^{1,\hyb}_{k})$ is uncountable.
It follows that $B \coloneqq \bigcap_{n\geq 1} B_n$ is uncountable, since its complement in $\overline{B}$ is countable: indeed,
$$
\overline{B} \backslash B = \bigcup_{n \geq 1} \overline{B}\backslash B_n,
$$
and each $\overline{B}\backslash B_n$ consists of the (at most countable) collection of branches that intersect $H(s_n,\A^{1,\hyb}_{k})$.

Now, consider a branch of the form $[\eta_{z,0},\eta_{z,1})$ that lies in $B$.
\cref{non contractibility corollary 2} guarantees that for all $n \geq 1$, there exists $s_n' \in [0,s_n)$ and $x_n \in \mathbf{E}_{k}(z)$ such that $H(s_n',x_n) = \eta_{z,0}$.
The space $\mathbf{E}_{k}(z) = \mathbf{E}_{k}(z,1)$ is sequentially compact,
hence the product $\mathbf{E}_{k}(z) \times [0,1]$ is as well; thus, after passing to a subsequence, we may assume that the sequence $\{ (s_n',x_n)\}_{n \geq 1} \subseteq [0,1] \times \mathbf{E}_{k}(z)$ converges to a point $(s,x) \in [0,s_0] \times \mathbf{E}_{k}(z)$. 
Moreover, by the continuity of $H$, we have $H(s,x) = \eta_{z,0}$. 
Arguing as in Case 1, it follows that $x \in \A^{1,\arch}_k$, as required.
\end{enumerate}
\end{proof}

\begin{theorem}\label{not contractible}
If $k$ is an uncountable subfield of $\C$ that is not contained in $\R$, then $\A^{1,\hyb}_{k}$ is not contractible. 
\end{theorem}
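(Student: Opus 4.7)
The plan is to argue by contradiction. Assume $\A^{1,\hyb}_k$ is contractible; since it is path-connected by \cref{prop:path-connected}, one can then produce a homotopy $H \colon [0,1] \times \A^{1,\hyb}_k \to \A^{1,\hyb}_k$ between the identity and the constant map with value $\eta_{0,1} \in \A^{1,\triv}_k$ (by concatenating a contracting homotopy with a path in $\A^{1,\hyb}_k$ from the resulting basepoint to $\eta_{0,1}$). The goal is then to exhibit uncountably many pairwise disjoint nonempty open subsets inside a second countable subspace, which is impossible.

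First, I would apply \cref{old claim} to this homotopy $H$ to obtain an uncountable collection $\mathcal{B}$ of branches $[\eta_{p,0},\eta_{p,1})$ of $\A^{1,\triv}_k$, each of which meets the image $H([0,1] \times \A^{1,\arch}_k)$. For each such branch, select a pair $(s_p, x_p) \in [0,1] \times \A^{1,\arch}_k$ with $H(s_p,x_p) \in [\eta_{p,0},\eta_{p,1})$. By \cref{hybrid disc topology}, the interior $\mathrm{int}(\D_k(p)) = \{|p| < \chi(\lambda)\}$ contains the branch $[\eta_{p,0},\eta_{p,1})$; in particular, continuity of $H$ yields an open neighbourhood $U_p$ of $(s_p,x_p)$ in $[0,1] \times \A^{1,\arch}_k$ such that $H(U_p) \subseteq \mathrm{int}(\D_k(p))$.

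Next, set $V_p \coloneqq U_p \cap H^{-1}(\A^{1,\triv}_k)$, which is open in the subspace $W \coloneqq H^{-1}(\A^{1,\triv}_k) \cap ([0,1] \times \A^{1,\arch}_k)$. Each $V_p$ contains $(s_p,x_p)$ and is therefore nonempty, and for distinct $p, q \in \mathcal{B}$, any point of $V_p \cap V_q$ would be mapped by $H$ into
\[
\mathrm{int}(\D_k(p)) \cap \mathrm{int}(\D_k(q)) \cap \A^{1,\triv}_k = [\eta_{p,0},\eta_{p,1}) \cap [\eta_{q,0},\eta_{q,1}) = \emptyset.
\]
Thus $\{V_p\}_{p \in \mathcal{B}}$ is an uncountable family of pairwise disjoint nonempty open subsets of $W$.

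Finally, since $k$ is not contained in $\R$, \cref{hybrid affine line example} gives a homeomorphism $\A^{1,\arch}_k \simeq (0,1] \times \C$, so $[0,1] \times \A^{1,\arch}_k$ is separable metrizable and therefore second countable; every subspace, including $W$, inherits this property. But any collection of pairwise disjoint nonempty open subsets of a second countable space is countable, contradicting the construction of $\{V_p\}_{p \in \mathcal{B}}$. The hard part of the argument has already been carried out in \cref{old claim}, whose proof leverages the local picture of the hybrid closed disc (specifically \cref{corollary homotopy disc}) to force the homotopy to reach each branch tip $\eta_{p,0}$ through the Archimedean locus; what remains is the separability obstruction sketched above.
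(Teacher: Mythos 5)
Your proposal is correct and follows essentially the same route as the paper: the entire weight of the argument rests on \cref{old claim}, and the contradiction is obtained by producing uncountably many pairwise disjoint nonempty open subsets of a subspace of the second countable space $[0,1]\times\A^{1,\arch}_k$. The only (cosmetic) difference is that you build the disjoint open sets directly in the domain via the neighbourhoods $U_p\cap H^{-1}(\A^{1,\triv}_k)$, whereas the paper first observes that the sets $[\eta_{p,0},\eta_{p,1})\cap Z$ are disjoint, nonempty and open in $\A^{1,\triv}_k\cap Z$ and then pulls them back under the restriction of $H$.
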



\begin{proof}
The space $\A^{1,\hyb}_k$ is path-connected by~\cref{prop:path-connected},
so assume for sake of contradiction that there is a homotopy 
$$
H \colon [0,1] \times \A^{1,\hyb}_{k} \to \A^{1,\hyb}_{k}
$$
between the identity map on $\A^{1,\hyb}_{k}$ and the constant map with value $\eta_{0,1} \in \A^{1,\triv}_k$. %
By~\cref{old claim}, there is an uncountable set $B$ of branches that $Z \coloneqq H( [0,1] \times \A^{1,\arch}_k)$ intersects. 
%
If the branch $[\eta_{p,0},\eta_{p,1})$ lies in $B$, then $[\eta_{p,0},\eta_{p,1}) \cap Z$ is non-empty, and it is open in $\A^{1,\triv}_k \cap Z$. 
In particular, $\A^{1,\triv}_k \cap Z$ contains uncountably-many disjoint open subsets.
If $\widetilde{H} \colon [0,1] \times \A^{1,\arch}_k \to \A^{1,\hyb}_k$ denotes the restriction of $H$ to $[0,1] \times \A^{1,\arch}_k$, then 
$\widetilde{H}^{-1}(\A^{1,\triv}_k \cap Z)$ does as well.
However, $\widetilde{H}^{-1}(\A^{1,\triv}_k \cap Z)$ is a subspace of $[0,1] \times \A^{1,\arch}_k$, which is 
second countable (indeed, it is homeomorphic to the product of $[0,1] \times (0,1]$ with the complex plane).
This yields a contradiction.
\end{proof}


\section{Contractibility of affine space over non-Archimedean hybrid fields and dvrs}

\spa{
The goal of this section is to establish the homotopy equivalence of~\cref{theorem 2} over both a non-Archimedean field equipped with the hybrid norm, and a dvr equipped with the trivial norm. 
The latter case is used in \S\ref{section contractibility integers} to prove the analogous result over a Dedekind domain equipped with the trivial norm, which is an important component to the proof of~\cref{theorem 3}.
%
}

\spa{Throughout this section, we will treat the field and dvr cases simultaneously.
To this end, 
$A$ will denote either a field that is complete with respect to a non-Archimedean absolute value $|\cdot |_{\mathrm{field}}$, or a discrete valuation ring. 
If $A$ is a non-Archimedean field, write $| \cdot |_A$ for the hybrid norm $\max\{ |\cdot|_0, | \cdot |_{\mathrm{field}}\}$; 
if $A$ is a dvr, write $| \cdot |_A$ for the trivial norm on $A$, and $| \cdot |_{\mathrm{dvr}}$ for a norm given by the discrete valuation (and normalized in any way). 
In both cases, the pair $(A, |\cdot |_A)$ is a Banach ring (and, in fact, it is a geometric base ring).
}

\spa{Consider the homeomorphism $\alpha \colon [0,1] \stackrel{\simeq}{\to} \M(A,| \cdot |_A)$ given by the following rules:
\begin{itemize}
\item if $A$ is a non-Archimedean field, set 
$$
\alpha(\rho) \coloneqq |\cdot |_{\mathrm{field}}^{\rho},
$$
where $\alpha(0)$ is understood to be trivial norm on $A$;
\item if $A$ is a dvr, set 
$$
\alpha(\rho) \coloneqq |\cdot |_{\mathrm{dvr}}^{-\log(1-\rho)},
$$
where $\alpha(0)$ is understood to be the trivial norm, and $\alpha(1)$ is the seminorm given by reducing mod the maximal ideal and applying the trivial norm.
\end{itemize}
In both cases, note that $\alpha(0)$ is the trivial norm on $A$. 
As before, we omit the dependence on $| \cdot |_A$ from the notation for the spectrum, and simply write $\mathcal{M}(A)$. 
}

\spa{
Let $X = \A^{n}_A = \Spec(B)$ be the affine $n$-space over $A$ 
with coordinates $T_1,\ldots,T_n$, 
where $B = A[T_1,\ldots,T_n]$.
Write $\lambda \colon X^{\An} \to \M(A)$ for the analytification, and $X^{\triv} \coloneqq (\alpha^{-1} \circ \lambda)^{-1}(0) \subseteq X^{\An}$ for the fibre above $0$.
Note that if $A$ is a field, then $X^{\triv}$ is the trivially-valued analytification of $X$; if $A$ is a dvr, then $X^{\triv}$ is the trivially-valued analytification of the base change $\A^n_{\mathrm{Frac}(A)}$.
%
}

\subsection{The Hybrid Toric Skeleton}\label{section_41}

\spa{
Consider the $A$-torus $T = \Spec(A [u_1^{\pm 1},\ldots, u_n^{\pm 1}])$ of dimension $n$, which acts on $X$ by the multiplication map $\mathrm{m} \colon T \times_A X \to X$; that is, $\mathrm{m}$ is induced by the $A$-algebra homomorphism $\mathrm{m}^* \colon B \to B[u_1^{\pm 1},\ldots,u_n^{\pm 1}]$ given by 
$$
T_i \mapsto \mathrm{m}^*(T_i) \coloneqq u_i T_i.
$$
Further, consider the $B$-algebra homomorphism $F \colon B[u_1^{\pm 1},\ldots,u_n^{\pm 1}] \to B[[v_1,\ldots,v_n]]$ given by
$$
u_i \mapsto F(u_i) \coloneqq 1 + v_i.
$$
Said differently, if $f \in B[u_1^{\pm 1},\ldots,u_n^{\pm 1}]$, then $F(f)$ is the Taylor series expansion of $f$ about $(1,...,1)$ in $T \times_A X$, the $n$-dimensional torus over $X$. 
There is a unique way to write the image $F(f)$ in the form 
$$
F(f) = \sum_{\mu \in (\Z_{\geq 0})^n} F(f)_{\mu} v^{\mu},
$$
where $F(f)_{\mu} \in B$. 
These coefficients $F(f)_{\mu}$ can be constructed explicitly: if $f = \sum_{\nu \in (\Z_{\geq 0})^n} b_{\nu} u^{\nu}$ for $b_{\nu} \in B$, then the binomial formula shows that
\begin{equation}\label{formula for coefficients}
F(f)_{\mu} = \sum_{\nu \in M(f)} {\nu \choose \mu} b_{\nu},
\end{equation}
where $M(f) \subseteq (\Z_{\geq 0})^n$ is the finite subset of multi-indices $\nu$ such that $b_{\nu} \not= 0$.
In particular, $F(f)_{\mu} = 0$ for $|\mu| \gg 0$.
}

\spa{\label{affine toric lemma 1}
For $t = (t_1,\ldots,t_n) \in [0,1]^n$ and $x \in X^{\An}$, 
we will construct a point
$p(t,x) \in (X \times_A T)^{\An}$ 
that maps to $x$ under the analytification $(T \times_A X)^{\An} \to X^{\An}$ of the projection map.
It is defined as follows:
if $f \in B[u_1^{\pm 1},\ldots,u_n^{\pm 1}]$, then
\begin{equation}\label{formula for p}
|f(p(t,x))| \coloneqq \sup_{\mu \in (\Z_{\geq 0})^n} |F(f)_{\mu}(x)| t^{\mu}.
\end{equation}
As $F(f)_{\mu} = 0$ for $|\mu | \gg 0$, this supremum is in fact a maximum.
%
}

\spa{The map $p \colon [0,1]^n \times X^{\An} \to (T \times_A X)^{\An}$, given by $(t,x) \mapsto p(t,x)$, is continuous, and it is used below to construct a strong deformation retraction of $X^{\An}$ onto $X^{\triv}$.
Let $\ell \colon [0,1] \hookrightarrow [0,1]^n$ denote the diagonal map given by $t \mapsto (t,\ldots,t)$.
}

\begin{proposition}\label{affine toric lemma 2}
The map $q \colon [0,1] \times X^{\An} \to X^{\An}$, given by the composition
$$
[0,1] \times X^{\An} \stackrel{(\ell,\mathrm{id})}{\hookrightarrow} [0,1]^n \times X^{\An} \stackrel{p}{\longrightarrow} (T \times_A X)^{\An} \stackrel{\mathrm{m}^{\An}}{\longrightarrow} X^{\An},
$$
is continuous.
\end{proposition}

The map $q$ is the hybrid analogue of the map constructed by Thuillier in~\cite[Proposition 2.17]{thuillier}, and the proof of continuity follows the same argument. 
In order to simplify the proof of~\cref{affine toric lemma 2}, as well as later results, we first give an explicit formula for the point $q(t,x)$, which is analogous to the formulas appearing~\cite[\S 6.1]{berkovich} 
(indeed, the expression inside the maximum below in~\cref{affine toric eqn 9} is precisely the ``derivative'' $\del_{\nu}$ that appears in~\cite[Remark 6.1.3(ii)]{berkovich}). 

\begin{lemma}\label{affine toric lemma 4}
For any $t \in [0,1]$, $x \in X^{\An}$, and $f = \sum_{\mu \in (\Z_{\geq 0})^n} a_{\mu} T^{\mu} \in B$, we have
\begin{equation}\label{affine toric eqn 9}
|f(q(t,x))| = \max_{\nu \in (\Z_{\geq 0})^n} 
\left| \sum_{\mu \in (\Z_{\geq 0})^n} {\mu \choose \nu} a_{\mu} T^{\mu} (x)\right| t^{|\nu|}.
\end{equation}
Furthermore, we have
\begin{equation}\label{affine toric eqn 13}
|f(q(1,x))| = \max_{\mu \in (\Z_{\geq 0})^n} |a_{\mu}(x)| \cdot |T^{\mu}(x)|.
\end{equation}
\end{lemma}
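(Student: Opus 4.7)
The plan is to unwind the definitions of $q(t,x)$ and $p(t,x)$ and reduce the formula (\ref{affine toric eqn 9}) to a binomial expansion. Since $|f(q(t,x))| = |\mathrm{m}^*(f)(p((t,\ldots,t),x))|$, the first step is to compute the Taylor coefficients $F(\mathrm{m}^*(f))_\nu$ explicitly. Writing $\mathrm{m}^*(f) = \sum_\mu a_\mu u^\mu T^\mu$ and applying the multivariate binomial expansion $(1+v)^\mu = \sum_\nu \binom{\mu}{\nu} v^\nu$ (where $\binom{\mu}{\nu} = \prod_i \binom{\mu_i}{\nu_i}$, understood as zero whenever $\nu_i > \mu_i$ for some $i$) yields
\[
F(\mathrm{m}^*(f))_\nu = \sum_\mu \binom{\mu}{\nu} a_\mu T^\mu.
\]
Substituting this into (\ref{formula for p}) with $(t,\ldots,t)^\nu = t^{|\nu|}$ produces (\ref{affine toric eqn 9}) directly.

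For (\ref{affine toric eqn 13}) at $t = 1$, the inequality $\leq$ is the easy half. Because $A$ is non-Archimedean (whether a non-Archimedean hybrid field or a dvr equipped with the trivial norm), every point of $X^{\An}$ is a non-Archimedean seminorm on $B$, and the integer binomial coefficients are automatically bounded by $1$ in $|\cdot|_A$. The ultrametric inequality applied term-by-term inside (\ref{affine toric eqn 9}) at $t = 1$ then immediately produces $|f(q(1,x))| \leq \max_\mu |a_\mu(x)| \cdot |T^\mu(x)|$.

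The heart of the argument will be the reverse inequality. Set $M \coloneqq \max_\mu |a_\mu(x)| \cdot |T^\mu(x)|$ and $S \coloneqq \{\mu \colon |a_\mu(x)| \cdot |T^\mu(x)| = M\}$; since $f$ has finitely many nonzero coefficients, $S$ is finite and non-empty. My plan is to pick $\nu \in S$ maximal for the lexicographic order on $(\Z_{\geq 0})^n$ and show that the $\nu$-th term of (\ref{affine toric eqn 9}) realizes $M$. The key combinatorial observation is that componentwise domination $\mu_i \geq \nu_i$ for all $i$, together with $\mu \neq \nu$, forces $\mu >_{\mathrm{lex}} \nu$; by lex-maximality of $\nu$ in $S$, any such $\mu$ satisfies $|a_\mu(x)| \cdot |T^\mu(x)| < M$ strictly. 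Splitting
\[
\sum_\mu \binom{\mu}{\nu} a_\mu T^\mu(x) = a_\nu T^\nu(x) + \sum_{\substack{\mu \neq \nu \\ \mu_i \geq \nu_i \; \forall \, i}} \binom{\mu}{\nu} a_\mu T^\mu(x),
\]
the leading term has absolute value exactly $M$ while every tail summand has absolute value strictly smaller than $M$. The ultrametric inequality then forces the whole sum to have absolute value exactly $M$, providing the needed lower bound.

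The main point to get right will be the lex-maximality/componentwise-ordering bookkeeping in the last paragraph: one has to verify carefully that no tail term can conspire with $a_\nu T^\nu(x)$ to achieve $M$, which is precisely what the strict ultrametric cancellation rules out. Once that is cleanly set up, the remainder is formal expansion.
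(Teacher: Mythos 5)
Your proposal is correct and follows essentially the same route as the paper: expand $F(\mathrm{m}^*(f))$ via the binomial formula to get \cref{affine toric eqn 9}, use $|{\mu \choose \nu}(x)| \leq 1$ and the ultrametric inequality for the upper bound in \cref{affine toric eqn 13}, and for the lower bound isolate a maximizing multi-index $\nu$ that no other maximizer dominates componentwise, so that strict ultrametric cancellation cannot occur. The only (immaterial) difference is your choice of the lex-maximal element of the maximizing set where the paper takes one of maximal total degree $|\nu|$; both selections guarantee the needed non-domination property.
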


\begin{proof}
We can write $\mathrm{m}^*(f) = \sum_{\mu \in (\Z_{\geq 0})^n} a_{\mu} T^{\mu} u^{\mu}$ in $B[u_1^{\pm 1},\ldots,u_n^{\pm 1}]$, and 
expanding $F(\mathrm{m}^*(f))$ as in~\cref{formula for coefficients} yields the equality
$$
F(\mathrm{m}^*(f)) = 
\sum_{\nu \in (\Z_{\geq 0})^n} \left( \sum_{\mu \in (\Z_{\geq 0})^n} {\mu \choose \nu} a_{\mu} T^{\mu} \right) v^{\nu},
$$
in $B[[v_1,\ldots,v_n]]$. 
The equation~\cref{affine toric eqn 9} follows by applying the seminorm $p(t,x)$ to the above expression.

As $| {\mu \choose \nu} (x)| \leq 1$ for all multi-indices $\mu,\nu$, it follows immediately from~\cref{affine toric eqn 9} that $|f(q(1,x))|$ is bounded above by the right-hand side of~\cref{affine toric eqn 13}. 
Assume that the right-hand side of~\cref{affine toric eqn 13} is strictly positive, otherwise there is nothing to prove.
Pick $\mu_0 \in (\Z_{\geq 0})^n$ among those multi-indices $\mu$ with $|a_{\mu}(x)| \cdot |T^{\mu}(x)|$ maximal, such that $|\mu_0|$ is maximal.
Let $J \subseteq (\Z_{\geq 0})^n$ be the subset of multi-indices $\mu$ with $|a_{\mu}(x)| \cdot |T^{\mu}(x)|$ maximal, and pick $\mu_0 \in J$ such that $|\mu_0|$ is maximal. Note that if $n > 1$, then $\mu_0$ is not necessarily unique. 

By taking $\nu = \mu_0$ in~\cref{affine toric eqn 9}, we see that 
\begin{equation}\label{lemma temp eqn}
|f(q(1,x))| \geq \left| \left(a_{\mu_0} T^{\mu_0} + \sum_{\mu \not= \mu_0} {\mu \choose \mu_0} a_{\mu} T^{\mu}\right)(x) \right|. 
\end{equation}
Suppose there is a multi-index $\mu \not= \mu_0$ such that $|a_{\mu_0}(x) | \cdot |T^{\mu_0}(x)| = | {\mu \choose \mu_0} a_{\mu} T^{\mu}(x)|$, in which case $\mu \in J$ and $|\mu| = |\mu_0|$. However, in order for ${\mu \choose \mu_0}$ to be nonzero, we must have that $\mu_i \geq (\mu_0)_i$ for all $i =1,\ldots,n$; this cannot occur if $\mu \not= \mu_0$ and $|\mu| = |\mu_0|$.
Thus, $|a_{\mu_0}(x)| \cdot |T^{\mu_0}(x)| > |{\mu \choose \mu_0} a_{\mu} T^{\mu}(x)|$ for all $\mu \not= \mu_0$, and hence the right-hand side of~\cref{lemma temp eqn} is equal to $|a_{\mu_0}(x)| \cdot |T^{\mu_0}(x)|$, as required.
\end{proof}


\begin{proof}[Proof of~\cref{affine toric lemma 2}]
The strategy of proof follows that of~\cite[Proposition 2.17(i)]{thuillier}.
Fix $f = \sum_{\mu} a_{\mu} T^{\mu}$ in $B$, and set
$\varphi(t,x) \coloneqq |f(q(t,x))|$. 
It suffices to show that $\varphi$ is continuous as a map $[0,1] \times X^{\An} \to \R$. 
By~\cref{affine toric lemma 4}, it is 
easy to see that $\varphi$ is monotonic in $t$, i.e.\ $\varphi(t,x) \leq \varphi(t',x)$ provided $t \leq t'$. 
Thus, by arguing as in~\cite[Proposition 2.17(i)]{thuillier}, it suffices to prove that $\varphi$ is continuous in each variable separately.

For $x \in X^{\An}$ fixed, we will first show that $\varphi(\cdot,x)$ is continuous. 
Pick $N > 0$ such that $F(f)_{\mu} = 0$ for all $|\mu| > N$, in which case
$$
\varphi(t,x) = \max_{|\nu| \leq N} |F(\mathrm{m}^*(f))_{\nu}(x)| t^{|\nu|}
$$
is a maximum of finitely-many continuous functions in $t$, hence it is itself continuous.

For fixed $t \in [0,1]$, we will show that $\varphi(t,\cdot)$ is continuous.
Write $\mathrm{m}^*(f) = \sum_{\mu \in M(f)} a_{\mu} T^{\mu} u^{\mu}$, where recall that $M(f) \subseteq (\Z_{\geq 0})^n$ is the finite subset of multi-indices $\mu$ such that $a_{\mu} \not= 0$. 
If $t= 0$, then 
$$
\varphi(0,x) = \left| \sum_{\mu \in M(f)} a_{\mu} (x) \right|,
$$
which is clearly continuous in $x$. If $t = 1$, 
then~\cref{affine toric lemma 4} shows that
$$
|f(q(1,x))| = \max_{\mu \in M(f)} |a_{\mu}(x)| \cdot |T^{\mu}(x)|,
$$
In particular, $\varphi(1,\cdot)$ is a maximum of finitely-many continuous functions, hence it is itself continuous. 
 
Assume now that $t \in (0,1)$. The function $\varphi(t,\cdot)$ is the maximum of a collection of continuous functions, and hence it is lower-semicontinuous. 
It remains to check that it is upper-semicontinuous.
For each $R > 0$, consider the open set
$$
U_R \coloneqq \left\{ x \in X^{\An} \colon \max_{\mu \in M(f)} |a_{\mu}(x)| < R \right\}.
$$
It suffices to verify that $\varphi(t,\cdot)$ is upper-semicontinuous after restricting to each member of the open cover $\{ U_R \}_{R > 0}$ of $X^{\An}$.
Fix $R > 0$ and $x_0 \in U_R$. 
For any $\epsilon > 0$, there exists $n_{\epsilon} > 0$ such that 
$$
R t^{|\nu|} < \varphi(t,x_0) + \epsilon
$$
for all multi-indices $\nu$ with $|\nu| > n_{\epsilon}$. 
Furthermore, 
$|F(\mathrm{m}^*(f))_{\nu}(x)| t^{|\nu|} \leq Rt^{|\nu|}$ for all $\nu$, so
it follows that
$$
\left\{ x \in U_R \colon \varphi(t,x) < \varphi(t,x_0) + \epsilon \right\} = \bigcap_{|\nu| \leq n_{\epsilon}} \left\{ x \in U_R \colon |F(\mathrm{m}^*(f))_{\nu}(x)| t^{|\nu|} < \varphi(t,x_0) + \epsilon  \right\}
$$
is an open neighbourhood of $x_0$.
Thus, $\varphi(t,\cdot)$ is upper-semicontinuous at $x_0$,
which
completes the proof.
\end{proof}

\begin{remark}
In order to construct the map $p$ (and, by extension, the map $q$) as in~\cite{berkovich,thuillier}, one would require a robust theory of relative Shilov boundaries (in particular, one that is compatible with base change).
If such a theory were to exist, one could proceed as follows: for each $t \in [0,1]$, construct a section $g_t$ of $T^{\An} \to \M(A)$ and a ``completed residue $A$-algebra'' $\H(g_t)$ of $g_t$ such that the point $p(t,x)$ is the unique Shilov point of $\M(\H(x)) \times_{\M(A)} \M(\H(g_t))$ for $x \in X^{\An}$. 
In order to make the above precise, one must make sense of $\H(g_t)$, and likely prove a version of~\cite[\S 3, Thm 1(4)]{gruson} over more general Banach rings. 
%
%
%
\end{remark}


\spa{
The \emph{hybrid toric skeleton of $X$} is the subset  $Z_X \subseteq X^{\An}$ consisting of those points $x \in X^{\An}$ such that
$$
|f(x)| = \max_{\mu \in (\Z_{\geq 0})^n} |a_{\mu}(x)| \cdot |T^{\mu}(x)| 
$$
for any $f = \sum_{\mu \in (\Z_{\geq 0})^n} a_{\mu} T^{\mu}$ in $B$. 
For any $z \in \M(A)$, the intersection $Z_X \cap \lambda^{-1}(z)$ coincides with the toric skeleton of $\lambda^{-1}(z)$ in the sense of~\cite{thuillier}, after identifying $\lambda^{-1}(z)$ with the analytification of $\A^{n}_{\H(z)}$.
}

\begin{proposition}\label{affine toric prop 3}
The map $q \colon [0,1] \times X^{\An} \to X^{\An}$ 
is a strong deformation retraction of $X^{\An}$ onto $Z_X$, and it satisfies $\lambda(q(t,x)) = \lambda(x)$ for any $(t,x) \in [0,1] \times X^{\An}$.
\end{proposition}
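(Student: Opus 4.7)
The plan is to verify, using the explicit formula for $|f(q(t,x))|$ in~\cref{affine toric lemma 4}, each condition distinguishing a strong deformation retraction --- $q(0,\cdot) = \mathrm{id}$, $q(1,X^{\An}) \subseteq Z_X$, and $q(t,\cdot)|_{Z_X} = \mathrm{id}_{Z_X}$ for every $t$ --- together with the $\lambda$-equivariance. Continuity of $q$ is already in hand from~\cref{affine toric lemma 2}.

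Three of the four assertions are immediate. Specializing~\cref{affine toric eqn 9} at $t=0$ kills every $\nu$ with $|\nu| \geq 1$, while the $\nu = 0$ coefficient is $\sum_\mu {\mu \choose 0} a_\mu T^\mu = f$, so $|f(q(0,x))| = |f(x)|$ and hence $q(0,x) = x$. The containment $q(1,x) \in Z_X$ is exactly the defining equality for $Z_X$, which is~\cref{affine toric eqn 13}. For $\lambda$-equivariance, I would apply the same formula to a constant polynomial $f = a \in A$: the only nonzero coefficient sits at $\nu = 0$ and equals $a$ itself, so $|a(q(t,x))| = |a(x)|$ for every $a \in A$, whence $\lambda(q(t,x)) = \lambda(x)$.

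The heart of the argument --- and the step I would treat most carefully --- is showing $q(t,z) = z$ for $z \in Z_X$ and $t \in [0,1]$. The $\nu = 0$ summand of~\cref{affine toric eqn 9} already equals $|f(z)|$, so the goal is to bound each remaining summand by $|f(z)|$. The essential input is that every seminorm in $X^{\An}$ is non-Archimedean in this setting: whether $A$ is a non-Archimedean hybrid field or a dvr equipped with the trivial norm, every point of $\M(A)$ is a non-Archimedean seminorm, hence $|n| \leq 1$ for all $n \in \Z$ and every multiplicative seminorm on $B$ is non-Archimedean as well. Consequently, for any $\nu$,
\[
\left| \sum_\mu {\mu \choose \nu} a_\mu T^\mu(z) \right| \;\leq\; \max_\mu |a_\mu(z)| \cdot |T^\mu(z)| \;=\; |f(z)|,
\]
using $\left|{\mu \choose \nu}(z)\right| \leq 1$ and the defining property of $Z_X$. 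Multiplying by $t^{|\nu|} \leq 1$ preserves the bound, so the maximum in~\cref{affine toric eqn 9} is attained at $\nu = 0$, yielding $|f(q(t,z))| = |f(z)|$ and hence $q(t,z) = z$. The only real subtlety is pinning down non-Archimedeanness uniformly across both cases of $A$, but this reduces to inspecting the formulas for $\alpha(\rho)$ given at the start of the section.
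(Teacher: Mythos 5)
Your proof is correct; the difference from the paper's lies in the key step, fixing the points of $Z_X$. The paper argues by a squeeze: it uses the monotonicity of $t \mapsto |f(q(t,x))|$ (observed inside the proof of \cref{affine toric lemma 2}) to get $|f(x)| \leq |f(q(t,x))| \leq |f(q(1,x))|$, and then invokes \cref{affine toric eqn 13} together with $x \in Z_X$ to close the loop. You instead bound every $\nu$-summand of \cref{affine toric eqn 9} directly by $|f(z)|$, using that each point of $X^{\An}$ is a non-Archimedean seminorm and that $|\binom{\mu}{\nu}(z)| \leq 1$. The two routes rest on the same inputs --- the paper's appeal to \cref{affine toric eqn 13} is precisely where the ultrametric estimate you redo by hand is hidden --- so your version is more self-contained, while the paper's reuses \cref{affine toric lemma 4} as a black box and is shorter. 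Your explicit verification that every seminorm on $B$ is non-Archimedean (via $|n(x)| \leq 1$ in both the hybrid-field and dvr cases) makes explicit a hypothesis the paper uses silently, and your check of $\lambda$-equivariance on constant polynomials supplies the one assertion of the proposition that the paper's proof never addresses. One small point, which the paper also elides: to deduce $q(1,x) \in Z_X$ from \cref{affine toric eqn 13} you should additionally note that $|a_{\mu}(q(1,x))| = |a_{\mu}(x)|$ and $|T^{\mu}(q(1,x))| = |T^{\mu}(x)|$ (apply \cref{affine toric eqn 13} to constants and to single monomials), since the defining condition for membership in $Z_X$ is an equality of values at the point $q(1,x)$, not at $x$.
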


\begin{proof}[Proof of~\cref{affine toric prop 3}]
%
%
It follows immediately from~\cref{affine toric lemma 4} that $q(0,x) =x$ and $q(1,x) \in Z_X$.
It remains to show that $q(t,x) \in Z_X$ for $x \in Z_X$ and $t \in [0,1]$. 
If $f  \in B$, then~\cref{affine toric lemma 4} shows that $|f(q(t,x))| \geq |f(x)|$, and hence 
$$
|f(x)| \leq |f(q(t,x))| \leq |f(q(1,x))| = |f(x)|,
$$
where the final equality follows from~\cref{affine toric lemma 4} and the assumption that $x \in Z_X$.
\end{proof}

\subsection{Proof of~\cref{theorem 2}}\label{section_42}

\spa{
For $t \in [0,1]$ and $x \in X^{\An}$, set $\beta(t,x) \coloneqq \min\{ 1-t,(\alpha^{-1} \circ \lambda)(x) \}$; this defines a continuous function $\beta \colon [0,1] \times X^{\An} \to [0,1]$. 
Further, define a point $J(t,x) \in X^{\An}$ by the formula
\begin{equation}\label{affine toric eqn 6}
f = \sum_{\mu \in (\Z_{\geq 0})^n} a_{\mu} T^{\mu} \mapsto |f(J(x,t))| \coloneqq \max_{\mu \in (\Z_{\geq 0})^n} |a_{\mu}( (\alpha \circ \beta)(t,x))| \cdot |T^{\mu}(x)|.
\end{equation}
The function $J$ fixes a point $x \in Z_X$ whenever $(\alpha^{-1} \circ \lambda)(x) > 1-t$, and sends $x$ to a point of $Z_X \cap (\alpha^{-1} \circ \lambda)^{-1}(1-t)$ otherwise. These properties are made precise in the sequence of lemmas below.
}

\begin{lemma}\label{affine toric lemma 5}
The map $J \colon [0,1] \times X^{\An} \to X^{\An}$ is continuous, it restricts to a map $[0,1] \times Z_X \to Z_X$, and it satisfies 
\begin{equation}\label{affine toric eqn 7}
\lambda(J(t,x))) = \alpha(\beta(t,x))
\end{equation}
for all $(t,x) \in [0,1] \times X^{\An}$.
\end{lemma}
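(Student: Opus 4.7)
The plan is to address the three claims in the order (3), (2), (1), since each subsequent assertion builds on the previous. To begin, I would verify that~\cref{affine toric eqn 6} actually defines a point of $X^{\An}$. The formula is a \emph{weighted Gauss seminorm} on $B = A[T_1,\ldots,T_n]$, with base seminorm $\alpha(\beta(t,x))$ on $A$ and weights $r_i \coloneqq |T_i(x)|$. Since $A$ is either a non-Archimedean field equipped with the hybrid norm or a dvr equipped with the trivial norm, every seminorm in $\M(A)$ is non-Archimedean; in particular $\alpha(\beta(t,x))$ is non-Archimedean, and the standard Gauss-norm calculation shows that~\cref{affine toric eqn 6} defines a multiplicative seminorm on $B$. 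Applying the formula to a constant $a \in A$ (viewed as $aT^0$) yields $|a(J(t,x))| = |a(\alpha(\beta(t,x)))|$; since $\alpha(\beta(t,x)) \in \M(A)$, we conclude both that the restriction of $J(t,x)$ to $A$ is bounded by $\|\cdot\|_A$ (so $J(t,x)$ is indeed a point of $X^{\An}$) and that $\lambda(J(t,x)) = \alpha(\beta(t,x))$, establishing~\cref{affine toric eqn 7}.

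For the second claim, I would prove the stronger statement that $J(t,x) \in Z_X$ for \emph{every} $(t,x) \in [0,1] \times X^{\An}$. Substituting $f = a_\mu$ into~\cref{affine toric eqn 6} gives $|a_\mu(J(t,x))| = |a_\mu(\alpha(\beta(t,x)))|$, while substituting $f = T^\mu$ and using multiplicativity of $J(t,x)$ on monomials gives $|T^\mu(J(t,x))| = |T^\mu(x)|$. Plugging these identities into the defining condition of $Z_X$ yields
\[
\max_\mu |a_\mu(J(t,x))| \cdot |T^\mu(J(t,x))| = \max_\mu |a_\mu(\alpha(\beta(t,x)))| \cdot |T^\mu(x)| = |f(J(t,x))|
\]
for every $f = \sum_\mu a_\mu T^\mu \in B$. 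Hence $J(t,x) \in Z_X$, and in particular the restriction $J \colon [0,1] \times Z_X \to Z_X$ is well-defined.

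Finally, for continuity of $J$, it suffices to show that $(t,x) \mapsto |f(J(t,x))|$ is continuous for each $f = \sum_\mu a_\mu T^\mu \in B$. Only finitely many $a_\mu$ are non-zero, so the maximum in~\cref{affine toric eqn 6} is over a finite set, and it is enough to prove continuity of each term. The factor $(t,x) \mapsto |T^\mu(x)|$ is continuous by the definition of the topology on $X^{\An}$, and the factor $(t,x) \mapsto |a_\mu(\alpha(\beta(t,x)))|$ factors as the composition of the continuous map $\beta \colon [0,1] \times X^{\An} \to [0,1]$ (a minimum of continuous functions), the homeomorphism $\alpha$, and the evaluation $z \mapsto |a_\mu(z)|$ on $\M(A)$, which is continuous by the definition of the spectrum topology.

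The main obstacle I anticipate is essentially bookkeeping rather than conceptual: verifying that~\cref{affine toric eqn 6} defines a multiplicative (rather than merely submultiplicative) seminorm is a routine Gauss-norm computation, but one must invoke the non-Archimedean nature of every seminorm in $\M(A)$ to prevent equality from degrading to inequality when expanding a product $fg$ and comparing coefficients.
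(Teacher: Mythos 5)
Your proof is correct and follows essentially the same route as the paper: equation~\cref{affine toric eqn 7} by evaluating on constants, the $Z_X$-claim by the direct computation $|a_\mu(J(t,x))|\cdot|T^\mu(J(t,x))| = |a_\mu(\alpha(\beta(t,x)))|\cdot|T^\mu(x)|$, and continuity by reducing to continuity of each $(t,x)\mapsto |a_\mu((\alpha\circ\beta)(t,x))|\cdot|T^\mu(x)|$ (the paper phrases this via preimages of subbasic open sets, but it is the same argument). Your additional verification that~\cref{affine toric eqn 6} defines a multiplicative point of $X^{\An}$ — a weighted Gauss seminorm over the non-Archimedean point $\alpha(\beta(t,x))$ — is a welcome detail that the paper leaves implicit.
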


\begin{proof}
The formula~\cref{affine toric eqn 7} is clear, as is the assertion that $J$ restricts to a map $[0,1] \times Z_X \to Z_X$.
It remains to show that $J$ is continuous.
Fix $f = \sum_{\mu} a_{\mu} T^{\mu}$ and $f' = \sum_{\mu} a_{\mu}' T^{\mu}$ in $B$ and $r, r' > 0$. 
It suffices to show that the set $J^{-1}(U)$ is open, where $U=\{ x \in X^{\An} \colon |f(x)| < r , |f'(x)| > r' \}$. 
We can write
$$
J^{-1}(U) = \left( \bigcap_{\mu \colon a_{\mu} \not=0} 
\{ (t,x) \colon |a_{\mu}( (\alpha \circ \beta)(t,x))|\cdot |T^{\mu}(x)| < r \}
\right) \bigcap \left( \bigcup_{\mu}
\{ (t,x) \colon |a'_{\mu}( (\alpha \circ \beta)(t,x))| \cdot |T^{\mu}(x)| > r' \}
\right)
$$
Write $V(a_{\mu},r)$ for a subset in the left-hand intersection, and write $W(a_{\mu}',r')$ for a subset in the right-hand intersection.
It suffices to show that $V(a_{\mu},r)$ and $W(a_{\mu}',r')$ are open. By expanding the definition of $\beta(t,x)$ and rearranging, we have
\begin{align*}
V(a_{\mu},r)
&= \bigcup_{\delta \in \Q_{>0}} \bigg( \{ (t,x) \colon |a_{\mu}( (\alpha \circ \beta)(t,x))| < \delta \} \cup \left\{ (t,x) \colon \delta |T^{\mu}(x)| < r \right\} \bigg). 
\end{align*}
For $\delta \in \Q_{>0}$, the right-hand set in the union is clearly open.
The left-hand set is open since it is the preimage of $\{ y \in \M(A) \colon |a_{\mu}(y)| < \delta \}$ under the continuous map $\alpha \circ \mu$.
Thus, $V(a_{\mu},r)$ is open.
Similarly, one shows that $W(a_{\mu}',r')$ is open. 
\end{proof}

\spa{
Consider the image $Z_{X,0} \coloneqq J(1,Z_X) \subseteq Z_X$ of the hybrid toric skeleton under $J(1,\cdot)$. 
As $\beta(1,x) = 0$ for all $x \in X^{\An}$, it follows from~\cref{affine toric eqn 7} that $Z_{X,0} \subseteq X^{\triv}$. 
In fact, $Z_{X,0}$ coincides with the toric skeleton of $X^{\triv}$ in the sense of~\cite{thuillier}.
}

\begin{lemma}\label{affine toric lemma 6}
The restriction of $J$ to $Z_X$ is a strong deformation retraction of $Z_X$ onto $Z_{X,0}$.
\end{lemma}

\begin{proof}
It is clear from the definition that $J(0,x) = x$ for $x \in Z_X$ (since $\beta(x,0) = \alpha(\lambda(x))$), and $J(1,x) \in Z_{X,0}$ by the definition of $Z_{X,0}$.
Moreover, for any $t \in [0,1]$, $\beta(t,x) = 0 = \alpha^{-1}(\lambda(x))$ whenever $x \in Z_{X,0}$, so $J(t,x) = x$ for such $x$.
\end{proof}

\begin{lemma}\label{affine toric lemma 7}
The restriction of $q$ to 
$X^{\triv}$
$(\alpha^{-1} \circ \lambda)^{-1}(0)$ 
is a strong deformation of 
$X^{\triv}$
$(\alpha^{-1} \circ \lambda)^{-1}(0)$ 
onto $Z_{X,0}$.
\end{lemma}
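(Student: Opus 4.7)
The plan is to deduce the statement directly from the properties of $q$ proved in~\cref{affine toric prop 3}, together with the identification $Z_X \cap X^{\triv} = Z_{X,0}$.

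First I would note that, since $\lambda(q(t,x)) = \lambda(x)$ for all $(t,x) \in [0,1] \times X^{\An}$ by~\cref{affine toric prop 3}, the map $q$ sends $[0,1] \times X^{\triv}$ into $X^{\triv}$, so the restriction in question is well-defined. Moreover, \cref{affine toric prop 3} gives $q(0,x) = x$ for every $x$, $q(1,x) \in Z_X$ for every $x$, and $q(t,y) = y$ whenever $y \in Z_X$ and $t \in [0,1]$. Combined with the observation above, this shows that the restriction $q|_{[0,1] \times X^{\triv}}$ is a strong deformation retraction of $X^{\triv}$ onto $Z_X \cap X^{\triv}$.

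The remaining point is the equality $Z_X \cap X^{\triv} = Z_{X,0}$. The inclusion $Z_{X,0} \subseteq Z_X \cap X^{\triv}$ is immediate: by~\cref{affine toric lemma 5} and~\cref{affine toric lemma 6}, $J(1,\cdot)$ sends $Z_X$ to $Z_X$, and $\lambda(J(1,x)) = \alpha(\beta(1,x)) = \alpha(0)$ for every $x \in X^{\An}$, so $Z_{X,0} = J(1,Z_X) \subseteq Z_X \cap X^{\triv}$. For the reverse inclusion, the key calculation is to show that $J(1,y) = y$ for any $y \in Z_X \cap X^{\triv}$. For such $y$, $\beta(1,y) = \min\{0,\alpha^{-1}(\lambda(y))\} = 0$, so $(\alpha\circ\beta)(1,y)$ is the trivial norm on $A$. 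For any $a \in A$, the value $|a((\alpha\circ\beta)(1,y))|$ therefore agrees with $|a(y)|$, since the restriction of the seminorm $y$ to $A$ is $\lambda(y) = \alpha(0)$, which is also the trivial norm. Using the defining equation~\cref{affine toric eqn 6} of $J$ together with the defining property of $Z_X$ applied to $y$, we obtain
\[
|f(J(1,y))| = \max_{\mu} |a_\mu((\alpha\circ\beta)(1,y))| \cdot |T^\mu(y)| = \max_{\mu} |a_\mu(y)| \cdot |T^\mu(y)| = |f(y)|
\]
for every $f = \sum_\mu a_\mu T^\mu \in B$, hence $J(1,y) = y$ and thus $y \in J(1,Z_X) = Z_{X,0}$.

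The argument is essentially formal; the only minor obstacle is unwinding the definition of $J$ and the trivially-valued seminorm to identify $Z_X \cap X^{\triv}$ with $Z_{X,0}$. Once this identification is in hand, the deformation retraction statement follows immediately from~\cref{affine toric prop 3}.
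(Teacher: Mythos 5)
Your argument is correct, and it is a legitimate instance of the ``verify the assertion directly'' alternative that the paper mentions but does not carry out: the paper's official proof is simply a citation of Thuillier's Proposition 2.17 applied to the fibre $X^{\triv}$, whereas you derive the statement internally from \cref{affine toric prop 3}. The two ingredients you need both check out. First, since $\lambda\circ q(t,\cdot)=\lambda$, the homotopy $q$ preserves the fibre $X^{\triv}$, and the three defining properties of a strong deformation retraction ($q(0,\cdot)=\mathrm{id}$, $q(1,\cdot)$ landing in $Z_X$, and $q(t,\cdot)$ fixing $Z_X$ pointwise) restrict to give a strong deformation retraction of $X^{\triv}$ onto $Z_X\cap X^{\triv}$. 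Second, your identification $Z_X\cap X^{\triv}=Z_{X,0}$ is right: the inclusion $Z_{X,0}\subseteq Z_X\cap X^{\triv}$ follows from \cref{affine toric lemma 5} and the formula $\lambda(J(1,x))=\alpha(0)$, and for the reverse inclusion your computation that $J(1,y)=y$ for $y\in Z_X\cap X^{\triv}$ is exactly the point --- for such $y$ the restriction of $y$ to $A$ is the trivial norm, so $|a_\mu((\alpha\circ\beta)(1,y))|=|a_\mu(y)|$ and the defining equation \cref{affine toric eqn 6} of $J$ reduces to the defining equation of $Z_X$ evaluated at $y$. What your route buys is self-containedness (no appeal to Thuillier, and no need to re-examine his fibrewise construction of the retraction of the trivially-valued analytification), at the cost of making explicit the equality $Z_X\cap X^{\triv}=Z_{X,0}$, which the paper only asserts in passing. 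The citation is of course shorter, but your version is arguably the cleaner one to include, since it reuses work already done in \cref{affine toric prop 3}.
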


\begin{proof}
This is the assertion~\cite[Proposition 2.17]{thuillier}, after replacing the $\beth$-analytification $X^{\beth}$ with $X^{\triv}$ (though the argument is identical). 
Alternatively, it is straightforward to verify the assertion directly.
\end{proof}

\begin{proof}[Proof of~\cref{theorem 2}]
Consider the commutative diagram of inclusions
\begin{center}
\begin{tikzcd}
Z_X \arrow[hook]{r}{} & X^{\An} \\
Z_{X,0} \arrow[hook]{u}{} \arrow[hook]{r}{} & X^{\triv} \arrow[hook]{u}{}
\end{tikzcd}
\end{center}
The inclusion $Z_X \hookrightarrow X^{\An}$ is a homotopy equivalence by~\cref{affine toric lemma 2}, the inclusion $Z_{X,0} \hookrightarrow Z_X$ is a homotopy equivalence by~\cref{affine toric lemma 6}, and the inclusion $Z_{X,0} \hookrightarrow X^{\triv}$ is a homotopy equivalence by~\cref{affine toric lemma 7}.
It follows that $X^{\triv} \hookrightarrow X^{\An}$ is a homotopy equivalence. 
\end{proof}

\section{Contractibility of the affine line over the ring of integers of a number field}\label{section contractibility integers}
In this section, we discuss an extension of~\cref{theorem 2} to an affine spaces over Dedekind domains, which we then combine with~\cref{theorem 1} to prove~\cref{theorem 3}.

\subsection{Proof of~\cref{theorem 2} over a Dedekind Domain}\label{extension_to_dedekind_domains}
Let $A$ be a Dedekind domain equipped with the trivial norm $| \cdot |_0$. 
While $\mathcal{M}(A)$ is no longer homeomorphic to $[0,1]$, it admits a simple (set-theoretic) description as the disjoint union of $\mathcal{M}(A_{\frakp},|\cdot |_0)$ over $\frakp \in \Spec(A)$, but identify the trivial norm of the spectra of the discrete valuation rings $A_{\frakp}$'s; see~\cite[1.4.2]{berkovich} or~\cref{fig:dedekind}.
For this reason, the proof of~\cref{theorem 2} can be carried through in this setting with a few small modifications, which we illustrate below.

For each $\frakp \in \Spec(A)$, pick a uniformizer $\pi_{\frakp}$ of $A_{\frakp}$.
For any $z \in \M(A)$, set
$$
\mathrm{ht}(z) \coloneqq \prod_{\frakp \in \Spec(A)} |\pi_{\frakp}(z)|.
$$
For each $z$, all but one factor of the above product is equal to $1$, because $|\pi_{\frakp}(z)| = 1$ whenever $\frakp \not= \mathrm{red}(z)$, where $\mathrm{red} \colon \M(A) \to \Spec(A)$ denotes the reduction map. 
Moreover, $\mathrm{ht}$ is independent of the choice of uniformizers $\pi_{\frakp}$ because any two differ by a unit $u$ in $A_{\frakp}$, and $|u(z)| = 1$.
It is easy to check that $\mathrm{ht}$ is continuous. 

\begin{figure}[h]
  \centering
  \begin{tikzpicture}
    \draw[thick] (1,0) -- (1,2);
    \node[anchor=south] at (1,2) {$| \cdot |_0$};
    \node at (1,2) {$\bullet$};
    \node at (1,1) {$\bullet$};
    \node[anchor=west] at (1,1) {$|\cdot |_{\frakp,c}$};
    \node at (1,0) {$\bullet$};
    \node[anchor=north] at (1,0) {$| \cdot |_{\frakp,0}$};
    \draw[thick] (-1,0) -- (1,2);
    \node at (-1,0) {$\bullet$};
    \node[anchor=north] at (-1,0) {$|\cdot |_{\frakp',0}$};
    \node at (0,0) {$\ldots$};
    \node at (-0.5,0.5) {$\bullet$};
    \node[anchor=east] at (-0.5,0.5) {$|\cdot |_{\frakp',c'}$};
    \draw[thick] (3,0) -- (1,2);
    \node at (3,0) {$\bullet$};
    \node[anchor=north] at (3,0) {$|\cdot |_{\frakp'',0}$};
    \node at (2,0) {$\ldots$};
    \node at (1.5,1.5) {$\bullet$};
    \node[anchor=west] at (1.5,1.5) {$|\cdot |_{\frakp'',c''}$};
    \draw (5,0) -- (5,2);
    \node[anchor=north] at (5,0) {$\mathrm{ht}$};
    \node at (5,0) {$-$};
    \node[anchor=west] at (5,0) {$0$};
    \node at (5,2) {$-$};
    \node[anchor=west] at (5,2) {$1$};
    \node at (5,1) {$-$};
    \node[anchor=west] at (5,1) {$c$};
    \node at (5,1.5) {$-$};
    \node[anchor=west] at (5,1.5) {$c''$};
    \node at (5,0.5) {$-$};
    \node[anchor=west] at (5,0.5) {$c'$};
  \end{tikzpicture}
  \caption{The space $\M(A,|\cdot |_0)$ is parametrized as follows: for each prime ideal $\frakp \in \Spec(A)$ and $c \in [0,1]$, one obtains a multiplicative seminorm $| \cdot |_{\frakp,c} \coloneqq c^{\ord_{\frakp}(\cdot )}$ on $A$, where $\ord_{\frakp}$ denotes the discrete valuation on $A_{\frakp}$. (For $c \in \{ 0,1 \}$, this expression is in the limiting sense; said differently, $| \cdot |_{\frakp,1} = |\cdot |_0$ and $|\cdot |_{\frakp,0} = | \cdot \bmod \frakp |_0$ for all $\frakp \in \Spec(A)$.)}
  \label{fig:dedekind}
\end{figure}

For $t \in (0,1]$ and $z \in \mathcal{M}(A)$, 
set $r_{t}(z) \in \M(A)$ to be the seminorm given by
$$
a \mapsto |a(r_t(z))| \coloneqq |a(z)|^{\min\{ 1, \frac{\mathrm{ht}(z)}{t} \}}.
$$
Set $r_0$ to be the identity map. The resulting function $\beta \colon [0,1] \times \M(A) \to \M(A)$, given by $(t,z) \mapsto r_t(z)$, is continuous and its fixed point locus is easily described: $r_t(z) = z$ if and only if $\mathrm{ht}(z) \geq t$. 
In fact, $\beta$ defines a strong deformation retraction of $\mathcal{M}(A)$ onto the trivial norm.

Now, if $X = \A^n_A$ is the $n$-dimensional affine space over $A$, 
then the arguments of~\cref{section_41} and~\cref{section_42} show that $\lambda^{-1}(| \cdot |_0) \hookrightarrow X^{\An}$ is a homotopy equivalence, after replacing $\beta \colon [0,1] \times X^{\An} \to X^{\An}$ with the function described above. 

\subsection{Proof of~\cref{theorem 3}}
Consider the ring of integers $A$ of a number field $K = \mathrm{Frac}(A)$.
It can be equipped with the norm
$$
\| \cdot \|_A \coloneqq \max_{\sigma} |\sigma(\cdot)|_{\infty},
$$
where the maximum ranges over all complex embedding $\sigma$ of $K$, and $|\cdot |_{\infty}$ is the Archimedean norm on $\mathbf{C}$.
For each such embedding $\sigma$, write $\| \cdot \|_{\hyb,\sigma} \coloneqq \max\{ |\sigma(\cdot)|_{\infty},|\cdot |_0 \}$ for the induced hybrid norm on $K$.

The spectrum $\M(A,\| \cdot \|_A)$ is well-understood (see e.g.~\cite[\S 1.4.1]{berkovich}) and it can be easily described in terms of a closed cover as follows:
\begin{itemize}
    \item the identity map $(A, \| \cdot \|_A) \to (A, |\cdot |_0)$ is contractive, and it induces a closed embedding $\M(A,|\cdot |_0) \hookrightarrow \M(A,\| \cdot \|_A)$;
    \item for each complex embedding $\sigma$ of $K$, the inclusion $(A,\| \cdot \|_A) \hookrightarrow (K,\| \cdot \|_{\hyb,\sigma})$ is contractive, and it induces a closed embedding $\M(K,\| \cdot \|_{\hyb,\sigma}) \hookrightarrow \M(A,\| \cdot \|_A)$.
\end{itemize}
All of the above closed subspaces of $\M(A,\| \cdot \|_A)$ intersect only at the trivial norm $|\cdot |_0$, and they form a cover of $\M(A,\| \cdot \|_A)$.

The tools have now been developed to complete the proof of the contractibility of $\A^{1,\An}_{(A,\| \cdot \|_A)}$.
For each complex embedding $\sigma$, there is an injective, continuous map 
$$
\A^{1,\hyb}_{(K,\| \cdot \|_{\hyb,\sigma})} \hookrightarrow \A^{1,\An}_{(A,\|\cdot \|_{A})}.
$$
Let $H_{\mathrm{hyb},\sigma} \colon [0,1] \times \A^{1,\hyb}_{(K,\| \cdot \|_{\hyb,\sigma})} \to \A^{1,\hyb}_{(K,\| \cdot \|_{\hyb,\sigma})}$ be a strong deformation retraction of $\A^{1,\hyb}_{(K,\| \cdot \|_{\hyb,\sigma})}$ onto $\A^{1,\an}_{(K,| \cdot |_0)}$; such a map exists by~\cref{theorem 1}.
Define the homotopy $H \colon [0,1] \times \A^{1,\An}_{(A,\|\cdot \|_{A})} \to \A^{1,\An}_{(A,\|\cdot \|_{A})}$ 
by
$$
x \mapsto \begin{cases}
H_{\mathrm{hyb},\sigma}(t,x) & \lambda(x) \in \M(K,\| \cdot \|_{\mathrm{hyb},\sigma}),\\
x & \mathrm{otherwise.}
\end{cases}
$$
This is well-defined because $H_{\mathrm{hyb},\sigma}$ is a strong deformation retraction, and it is clear that it is continuous.
Moreover, $H$ is a strong deformation retraction of $\A^{1,\An}_{(A,\|\cdot \|_{A})}$ onto $\A^{1,\An}_{(A,|\cdot |_{0})}$. 

By the extension of~\cref{theorem 2} to Dedekind domains described in~\cref{extension_to_dedekind_domains}, the inclusion 
$$\A^{1,\an}_{(K,|\cdot|_0)} \hookrightarrow \A^{1,\An}_{(A,|\cdot |_{0})}
$$
is a homotopy equivalence; 
in particular, $\A^{1,\An}_{(A,|\cdot|_0)}$ is contractible. 
It follows that $\A^{1,\An}_{(A,\|\cdot \|_{A})}$ is also contractible, which completes the proof of~\cref{theorem 3}.

\newcommand{\etalchar}[1]{$^{#1}$}

\end{document}